\newtheorem{satz}{Theorem}
\theoremstyle{plain}
\newtheorem{thm}{Theorem}[section]
\newtheorem{df}[thm]{Def\/inition}
\newtheorem{prop}[thm]{Proposition}
\newtheorem{cor}[thm]{Corollary}
\newtheorem{lemma}[thm]{Lemma}
\theoremstyle{remark}
\newtheorem{remark}[thm]{Remark}
\newcommand{\eqn}{\begin{equation}}
\newcommand{\eeqn}{\end{equation}}
\newcommand{\BBB}{\mathcal{B}}
\newcommand{\DDD}{\mathcal{D}}
\newcommand{\FFF}{\mathcal{F}}
\newcommand{\GGG}{\mathcal{G}}
\newcommand{\HHH}{\mathcal{H}}
\newcommand{\JJJ}{\mathcal{J}}
\newcommand{\MMM}{\mathcal{M}}
\newcommand{\PPP}{\mathcal{P}}
\newcommand{\TTT}{\mathcal{T}}
\newcommand{\WWW}{\mathcal{W}}
\newcommand{\FF}{\mathbb{F}}
\newcommand{\ZZ}{\mathbb{Z}}
\newcommand{\Fa}{\mathfrak{a}}
\newcommand{\Fp}{\mathfrak{p}}
\newcommand{\Fm}{\mathfrak{m}}
\DeclareMathOperator{\Coker}{Coker} 
\DeclareMathOperator{\Cris}{Cris} 
\DeclareMathOperator{\Def}{Def} 
\DeclareMathOperator{\GL}{GL} 
\DeclareMathOperator{\Hom}{Hom} 
\DeclareMathOperator{\Spec}{Spec}
\DeclareMathOperator{\Ker}{Ker}  
\DeclareMathOperator{\Image}{Im}  
\DeclareMathOperator{\Lie}{Lie}  
\DeclareMathOperator{\Frob}{Frob}  
\DeclareMathOperator{\id}{id}  
\DeclareMathOperator{\Nil}{Nil}  
\DeclareMathOperator{\Res}{Res}
\DeclareMathOperator{\Aut}{Aut}
\DeclareMathOperator{\uAut}{\underline{Aut}}
\DeclareMathOperator{\uEnd}{\underline{End}}
\DeclareMathOperator{\uHom}{\underline{Hom}}
\DeclareMathOperator{\uIsom}{\underline{Isom}}
\begin{document}

\title{Truncated Barsotti-Tate Groups and Displays}
\author{Eike Lau, Thomas Zink}
\maketitle

\begin{abstract}
We define truncated displays over rings in which a prime $p$
is nilpotent, we associate crystals to truncated displays, and
we define functors from truncated displays to
truncated Barsotti-Tate groups.
\end{abstract}

\section*{Introduction}

The aim of this paper is to analyse the relation between truncated
displays and truncated Barsotti-Tate groups.

Let us recall the notion of a truncated display as used in [L2]. 
We fix a prime number $p$. 
Let $R$ be a commutative ring with unit such that $pR = 0$. We
denote by $W(R)$ the ring of $p$-typical Witt vectors and by $W_n(R)$  
the truncated ones. We write $^{F}\xi$ and $^{V}\xi$ for the Frobenius
and Verschiebung of an element $\xi \in W(R)$. 

A display $\mathcal{P}$ over $R$ may be given by
the following data: Two locally free finitely generated $W(R)$-modules
$T$ and $L$ and a Frobenuis linear isomorphism 
\begin{displaymath}
\Phi: T \oplus L \rightarrow T \oplus L.
\end{displaymath}
In the introduction we will assume that $T \cong W(R)^{d}$ and $L
\cong W(R)^{c}$ are free $W(R)$-modules. Then $\Phi$ 
is given by an invertible block matrix  
\begin{displaymath}
\begin{array}{cc}
  \left(
\begin{array}{cc}
A & B\\
C & D
\end{array}
\right) \in \GL_{d+c}(W(R)).  
\end{array}
\end{displaymath}
\begin{displaymath}
\Phi 
(\left(
\begin{array}{c}
t\\
l 
\end{array}
\right)) =
\left(
\begin{array}{cc}
A & B\\
C & D
\end{array}
\right)
\left(
\begin{array}{c}
^{F}t\\
^{F}l
\end{array}
\right),
\end{displaymath}
where $t \in W(R)^{d}$ and $l \in W(R)^{c}$. The height of the display
is $h = d + c$. 

Assume a second display $\mathcal{P}'$ is given by a block
matrix. Then a morphism $\mathcal{P} \rightarrow \mathcal{P}'$ is the
same as a block matrix:
\begin{displaymath}
\left(
\begin{array}{cc}
X & \mathfrak{J}\\
Z & Y
\end{array}
\right) \in M(h' \times h, W(R)))
\end{displaymath}
of size $h'\times h$ such that the following relation holds: 
\begin{equation}\label{AbI1e} 
\left(
\begin{array}{cc}
A' & B'\\
C' & D'
\end{array}
\right)
\left(
\begin{array}{cc}
^F\!X &  \;\mathfrak{J}\\
p\,^F\! Z & ^F Y
\end{array}
\right)
=
\left(
\begin{array}{cc}
X & ^V \mathfrak{J}\\
Z & Y
\end{array}
\right)
\left(
\begin{array}{cc}
A & B\\
C & D
\end{array}
\right)
\end{equation} 
This is the description of the category of displays in terms of
matrices. 

To define truncated displays of level $n$ we take all
matrices with coefficients in $W_n(R)$.  Since $pR = 0$ there is a
Frobenius $F: W_n(R) \rightarrow W_n(R)$. Therefore the definition of
a morphism (\ref{AbI1e}) makes perfectly sense if
we take $V$ to be the composition $W_n(R) \overset{V}{\rightarrow}
W_{n+1}(R) \overset{Res}{\longrightarrow} W_n(R)$. 

Let $\mathcal{P}$ be a truncated display of level $n$ and let $m \leq n$. 
The restriction morphism $W_n(R) \rightarrow W_m(R)$ 
applied to the matrix of $\mathcal{P}$ gives a truncated display $\mathcal{P}(m)$ 
of level $m$, called the truncation of $\mathcal{P}$. This is a functor.

\subsubsection*{A partial inverse of the display functor}

By \cite{Lau-Smoothness} we have a functor 
\begin{displaymath}
   \Phi_{n,R}: \BBB\TTT_n(R) \rightarrow \mathcal{D}_n(R)
\end{displaymath} 
from the category of truncated $p$-divisible  groups of level $n$ over $R$
to the category of truncated displays of level $n$ over $R$. 
For $m\le n$ we denote the truncation of $G\in\BBB\TTT_n(R)$ by
$G(m)=G[p^m]$, the kernel of $p^m\id_G$.
The functors $\Phi_{n,R}$ are compatible with the truncation functors.

We say that $G \in \BBB\TTT_n(R)$ is nilpotent of order $e\ge 0$ if the iterate of the Frobenius 
$F_G^{e+1}:G\to G^{(p^{e+1})}$ is zero on the first truncation $G(1)$, 
or equivalently if $F_G^e$ induces zero on $\Lie(G^\vee)$, where $G^\vee$ is the Cartier dual of $G$. 
This condition can be formulated in terms of the truncated display of $G$.
By restriction we obtain a functor  
\begin{displaymath}
   \Phi_{n,R}: \BBB\TTT^{(e)}_n(R) \rightarrow \mathcal{D}^{(e)}_n(R)
\end{displaymath}
from the category of truncated $p$-divisible groups which are nilpotent of order $e$ 
to the category of truncated displays which are nilpotent of order $e$.

\begin{satz}\label{AbI1t} 
Assume that $pR = 0$. Let $n,m,e \geq 0$ be natural numbers such that
$n \geq m(e+1)$. 
There is a functor 
\begin{displaymath}
   BT_{m,R} :  \mathcal{D}^{(e)}_n(R) \rightarrow \BBB\TTT^{(e)}_m(R)
\end{displaymath}
such that we have natural isomorphisms
\begin{align*}
BT_{m,R}(\Phi_{n,R}(G)) \cong G(m) & \qquad\text{for}\quad  G \in \BBB\TTT^{(e)}_n(R),\\
\Phi_{m,R}(BT_{m,R}(\mathcal{P})) \cong \mathcal{P}(m) & \qquad\text{for}\quad  \mathcal{P}
\in \mathcal{D}^{(e)}_n(R).
\end{align*}
\end{satz}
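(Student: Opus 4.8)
The plan is to construct $BT_{m,R}$ from the crystal attached to a truncated display, using the Dieudonn\'e theory of displays over perfect rings both as a model for the construction and as the source of the two comparison isomorphisms. It suffices to produce $BT_{m,R}(\mathcal P)\in\BBB\TTT^{(e)}_m(R)$ functorially in $R$ and in $\mathcal P$; since truncated displays of level $n$ nilpotent of order $e$ over $R$ form the groupoid of $R$-points of an algebraic $\FF_p$-stack $\mathcal D^{(e)}_n$ of finite type carrying a universal object, this amounts to a morphism $\mathcal D^{(e)}_n\to\BBB\TTT^{(e)}_m$, which one builds on a smooth atlas and then glues. One reduces at once to $m\ge1$, the case $m=0$ being trivial. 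The decisive input is that for $R$ perfect the functor $\Phi_{n,R}\colon\BBB\TTT^{(e)}_n(R)\to\mathcal D^{(e)}_n(R)$ is an equivalence (truncated Dieudonn\'e theory over perfect rings, after Zink and Lau), so that over such $R$ the value of $BT_{m,R}$ is forced.

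I would make the perfect case explicit first. For $R$ perfect one has $W_n(R)=W(R)/p^nW(R)$, so the matrix of $\mathcal P$ lifts (arbitrarily) to an invertible block matrix over $W(R)$ — invertibility being preserved because $W(R)$ is $p$-adically complete and $n\ge1$ — that is, to an honest display $\tilde{\mathcal P}$ with $\tilde{\mathcal P}(n)\cong\mathcal P$, necessarily nilpotent because nilpotence of order $e$ is detected already on $\mathcal P(1)$. One then sets $BT_{m,R}(\mathcal P)=BT(\tilde{\mathcal P})[p^m]$, which lies in $\BBB\TTT^{(e)}_m(R)$ and, by the equivalence above, equals $G(m)$ for the unique $G$ with $\Phi_{n,R}(G)\cong\mathcal P$. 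Over an arbitrary base with $pR=0$ no honest lift is available, and this is where the crystal enters: to $\mathcal P\in\mathcal D^{(e)}_n(R)$ the paper attaches a crystal $\mathbb D_{\mathcal P}$ on the crystalline site of $R$, carrying the Hodge filtration coming from the $L$-part and the Frobenius, and one \emph{defines} $BT_{m,R}(\mathcal P)$ to be the level-$m$ Barsotti-Tate group whose Dieudonn\'e crystal is $\mathbb D_{\mathcal P}$ reduced modulo $p^m$ and whose Hodge filtration is the induced one.

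The main obstacle is the well-posedness of this last step, and it is exactly here that $n\ge m(e+1)$ is used. Reconstructing a Barsotti-Tate group, even only modulo $p^m$, from display-type data requires knowing the \emph{undivided} Frobenius — equivalently the Verschiebung — modulo $p^m$, whereas a truncated display of level $n$ records only the divided Frobenius $F_1$ to precision $n$. On an object nilpotent of order $e$ one can still recover $F$, and with it $V$, from $F_1$ by an iteration of the Frobenius which, thanks to the nilpotence, terminates after $e+1$ steps; a precision count through this iteration shows that the level-$n$ datum with $n\ge m(e+1)$ determines $F$ and $V$, hence the Dieudonn\'e crystal of the sought group together with its Hodge filtration, modulo $p^m$, and determines them independently of the choices made (of an honest lift over a perfect ring, of a PD-thickening in general). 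Carrying out this bookkeeping — over the PD-thickening $W(R)\to R$ and over divided-power thickenings of a presentation of $R$, using the explicit matrix form of the display operators — and checking along the way that the reconstructed object really satisfies the exactness conditions of a level-$m$ Barsotti-Tate group and is nilpotent of order $e$, is the technical core. Granting this, functoriality in $R$ and in $\mathcal P$ is inherited from the crystal, and the gluing over the atlas goes through because the construction depends only on $\mathcal P$ and functorially on isomorphisms of displays; this produces $BT_{m,R}$ for every $R$.

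It remains to exhibit the two natural isomorphisms. For $G\in\BBB\TTT^{(e)}_n(R)$, the crystal of $\Phi_{n,R}(G)$ is, by the compatibility of the crystal construction with the display functor, the Dieudonn\'e crystal of $G$ together with its Hodge filtration, truncated at level $n$; hence $BT_{m,R}(\Phi_{n,R}(G))$ is the level-$m$ Barsotti-Tate group with Dieudonn\'e crystal that of $G$ modulo $p^m$ and with the matching Hodge filtration, and by the crystalline description this is $G[p^m]=G(m)$; the identification is forced over perfect rings, where $\Phi$ is an equivalence, and spreads out by functoriality. For $\mathcal P\in\mathcal D^{(e)}_n(R)$, the group $BT_{m,R}(\mathcal P)$ was built to have Dieudonn\'e crystal $\mathbb D_{\mathcal P}$ modulo $p^m$ with the induced Hodge filtration, and since $\Phi_{m,R}$ reads the truncated display off from precisely this crystalline datum, $\Phi_{m,R}(BT_{m,R}(\mathcal P))\cong\mathcal P(m)$; again this is checked over perfect rings, using that $\Phi$ is there an equivalence compatible with truncation, and then spread out. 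Finally, the compatibility of all the constructions with the truncation functors gives the naturality of the two isomorphisms in $G$ and in $\mathcal P$.
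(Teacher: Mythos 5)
There is a genuine gap at the heart of your construction. You \emph{define} $BT_{m,R}(\PPP)$ over a general base with $pR=0$ as ``the level-$m$ Barsotti--Tate group whose Dieudonn\'e crystal is $\mathbb D_{\PPP}$ reduced modulo $p^m$ with the induced Hodge filtration.'' This presupposes that a truncated Barsotti--Tate group over an arbitrary $\FF_p$-algebra (or even over a smooth atlas of the stack $\DDD_n^{(e)}$) can be reconstructed from its Dieudonn\'e crystal plus Hodge filtration --- i.e.\ an inverse to the crystalline Dieudonn\'e functor. No such inverse is available in this generality; essential surjectivity and full faithfulness of crystalline Dieudonn\'e theory over general bases in characteristic $p$ are open or at best known in restricted settings, and the ``precision count through the iteration of Frobenius'' you sketch does not produce a group scheme, only (at best) semilinear-algebra data. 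This is exactly the difficulty the paper's construction is designed to avoid: it builds the group \emph{directly} as a functor on nilpotent $R$-algebras, setting $BT_m(\PPP)=FG_n(\PPP)[p^m]$ where $FG_n(\PPP)(N)$ is the cokernel of $\dot F-1:\hat Q_N\to\hat P_N$ for $N\in\Nil_R^{(n)}$, and then uses the inclusion $G'[p^m]\subseteq G'[F^{m(e+1)}]\subseteq G'[F^n]$ (for a display lifting $\PPP$) to see that this is a truncated $p$-divisible group of level $m$ --- this is where $n\ge m(e+1)$ actually enters, not through a precision count on $F$ and $V$.

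Two further concrete problems. First, the crystal $\mathbb D_{\PPP}$ on $\Cris_m(R)$ is only defined under the hypothesis $n>m(e+1)+1$, which is strictly stronger than the bound $n\ge m(e+1)$ in the statement you are proving; so even granting an inverse Dieudonn\'e functor, your construction would not cover the stated range of parameters. Second, your verification of the two comparison isomorphisms rests on checking them over perfect rings and ``spreading out by functoriality''; since $R$ is not assumed reduced, agreement at perfect (or geometric) points does not determine a morphism, and the paper instead proves $FG_n(\Phi_n(G))\cong G[F^n]$ by resolving $G$ by $p$-divisible groups and invoking the exactness of $\Phi_n$ and $FG_n$ together with $BT(\Phi(X))\cong\hat X$, and obtains $\Phi_m\circ BT_m\cong\tau$ from the rigidity statement that $\uAut^o(G)\to\uAut(G(m))$ is trivial. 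A genuine-point/ordinarity argument of the kind you gesture at is used in the paper only to compare two already-constructed flat closed subgroup schemes, which is a much weaker use than constructing a natural transformation.
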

This is proved in section 3 (Lemma \ref{Le:BTm} and Proposition \ref{Pr-BT-Phi}).
The construction of the functor $BT_{m,R}$ is a variant of the functor $BT_R$
from nilpotent displays to $p$-divisible groups in \cite{Zink-DFG}.

\subsubsection*{Extended versions of truncated displays}

We define truncated displays also for rings $R$ in which $p$ is nilpotent, 
and we develop a deformation theory for truncated displays,
based on a notion of relative truncated displays for a divided power extension $S\to R$.

Let $\Cris_m(R)$ be the category of  all $pd$-thickenings $S \rightarrow R$ with kernel 
$\mathfrak{a}$ such that $p^{m}\mathfrak{a} = 0$.
The central result about deformations is Propositon \ref{Pr-lift}, which implies
 that all liftings of a truncated display $\PPP$ over $R$ to a
relative truncated display $\tilde\PPP$ for $S\to R$ are isomorphic; morover if $\PPP$ is nilpotent
of order $e$, then the truncation of\/ $\tilde\PPP$ by $m(e+1)+1$ steps is unique up to \emph{unique}
isomorphism. This can be viewed as a refined
version of \cite[Theorem 44]{Zink-DFG} about deformations of nilpotent displays.  

This leads to the construction of a crystal associated to a nilpotent truncated display: 
Let $\mathcal{P}$ be a truncated display of level
$n$ over $R$ which is nilpotent of order $e$ and assume that
$n > m(e+1)+1$.
We define a crystal $\mathbb{D}_{\mathcal{P}}$ 
of locally free $\mathcal{O}$-modules on $\Cris_m(R)$ by the rule
$\mathbb D_{\mathcal P}(S)=\tilde P(1)$; see \eqref{AbCr1e}.

\begin{satz}\label{Theorem2}
Let $t \geq n$ such that
$p^tW_{n}(R) = 0$. Then there is a functor 
\begin{displaymath}
   \Phi_n: \BBB\TTT_t(R) \rightarrow \mathcal{D}_n(R).
\end{displaymath}
Let $X$ be a $p$-divisible group $R$ such that $X_{R/pR}$ is nilpotent of order $e$.
Let  $\mathcal{P} = \Phi_n(X(t))$. 
For an object $S \rightarrow R$ of $\Cris_m(R)$ where 
$n > m(e+1)+1$, we have a canonical isomorphism
\begin{displaymath}\label{AbLF2e}
   \mathbb{D}_{X}(S) \cong \mathbb{D}_{\mathcal{P}}(S)
\end{displaymath} 
where $\mathbb D_X$ is the Grothendieck-Messing crystal
of $X$ defined in {\rm \cite{Messing}}.
\end{satz}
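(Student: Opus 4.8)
The plan is to split the proof into three parts: the construction of the functor $\Phi_n$ over a $p$-nilpotent base, the reduction of the crystal comparison to a single divided power thickening, and the identification of the display module $\tilde P(1)$ with the value of the Grothendieck--Messing crystal.

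For the first part, note that $p^tW_n(R)=0$ forces $p$ to be nilpotent in $R$, so the theory of truncated displays over $p$-nilpotent rings from the preceding section applies. I would build $\Phi_n$ by imitating the construction of $\Phi_{n,R}$ in \cite{Lau-Smoothness}: pass to $R_0=R/pR$, where $\Phi_{n,R_0}$ produces a truncated display of level $n$ out of $X(t)_{R_0}[p^n]$ (using $n\le t$), and then lift this display from $R_0$ to $R$ along the divided power thickening $R\to R_0$, whose kernel $pR$ is annihilated by $p^t$; the hypothesis $t\ge n$ is what keeps this lifting step inside the range in which the procedure of \cite[Section~3]{Zink-DFG} goes through and depends functorially on $X(t)$. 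One then checks that $\Phi_n$ commutes with base change and recovers $\Phi_{n,R}$ when $pR=0$, so that the notation is justified, and that $\Phi_n$ preserves nilpotence of order $e$, which is what makes $\mathbb{D}_{\mathcal{P}}$ available for $\mathcal{P}=\Phi_n(X(t))$ under the hypothesis $n>m(e+1)+1$.

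For the second part, observe that $S\mapsto\mathbb{D}_X(S)$ and $S\mapsto\mathbb{D}_{\mathcal{P}}(S)$ are both crystals of locally free $\OOO$-modules of rank equal to the height $h$ on $\Cris_m(R)$. I would construct a morphism of crystals between them and then use rigidity: a morphism of crystals of locally free modules on $\Cris_m(R)$ that is bijective on one thickening $S\to R$ which is a nilpotent (pd) thickening of $R$ is an isomorphism. Hence it is enough to exhibit the comparison functorially in $S$ and to verify bijectivity on a cofinal family, for instance on the pd-envelopes attached to a presentation of $R$, or---after an \'etale localisation of $R$---on a thickening over which $X$ lifts to a $p$-divisible group. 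This reduces the statement to the following: for $S\to R$ in $\Cris_m(R)$ together with a lift $\tilde X$ of $X$ to a $p$-divisible group over $S$, there is an isomorphism $\Lie(E(\tilde X))\cong\tilde P(1)$ compatible with the transition maps of the two crystals, where $E(\tilde X)$ denotes the universal vector extension.

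For the third part, by Proposition~\ref{Pr-lift} the truncated display $\mathcal{P}$ lifts, essentially uniquely in the range $n>m(e+1)+1$, to a relative truncated display $\tilde{\mathcal P}$ for $S\to R$, and $\mathbb{D}_{\mathcal{P}}(S)=\tilde P(1)$ by construction. Applying a relative version of the functor $BT_{m,-}$ of Theorem~\ref{AbI1t} to $\tilde{\mathcal P}$ produces a truncated Barsotti--Tate group over $S$; together with the compatibility $BT_{m}(\mathcal{P})\cong X(m)$ over $R$ (which over $R/pR$ is the content of Theorem~\ref{AbI1t}) and the Grothendieck--Messing deformation theorem \cite{Messing}, which matches liftings of $X$ to $S$ with liftings of the Hodge filtration in $\mathbb{D}_X(S)$, this group is the $p^m$-torsion of a suitable lift $\tilde X$ of $X$; the essential uniqueness in Proposition~\ref{Pr-lift} is used to match liftings of $\mathcal{P}$ with liftings of the Hodge filtration inside $\tilde P(1)$, hence with liftings of $X$. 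Finally one invokes the compatibility of $BT$ with the two crystal structures---the display crystal of $\tilde{\mathcal P}$ is $\tilde P(1)$ by definition, and the Grothendieck--Messing crystal of $BT(\tilde{\mathcal P})$ is computed by the same module because $BT$ is constructed so as to carry the prescribed Lie algebra and universal vector extension---to obtain $\tilde P(1)\cong\mathbb{D}_{BT(\tilde{\mathcal P})}(S)=\mathbb{D}_{\tilde X}(S)=\mathbb{D}_X(S)$, naturally in $S$. The main obstacle is exactly this last compatibility: one must show that the $BT$ construction, analysed over $pR=0$ in Theorem~\ref{AbI1t} and Proposition~\ref{Pr-BT-Phi}, commutes with passage to the relative crystal of a pd-thickening $S\to R$; equivalently, that the Hodge filtration on $\tilde P(1)$ determined by the lift $\tilde{\mathcal P}$ corresponds to the Hodge filtration of $\tilde X$ under the comparison, which comes down to a careful analysis of the divided Frobenius on the kernel of $W(S)\to W(R)$.
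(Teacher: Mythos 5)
There are two genuine gaps here, and they sit exactly where the paper does its real work (Propositions \ref{AbLF1p} and \ref{Pr:DXDP}). First, your construction of $\Phi_n$ by forming $\Phi_{n,R/pR}(X(t)_{R/pR}[p^n])$ and then ``lifting'' the result along $R\to R/pR$ is not well defined: a truncated display over $R/pR$ has many non-isomorphic lifts to a truncated display over $R$ --- by Proposition \ref{Ab3p} they correspond to lifts of the Hodge filtration, and even that classification is only available under a nilpotence hypothesis, whereas $\Phi_n$ must be defined on all of $\BBB\TTT_t(R)$. What is canonical (Proposition \ref{Pr-lift}) is the lift to a \emph{relative} truncated display for $R\to R/pR$; to single out an actual display over $R$ you must feed in the Hodge filtration of $X(t)$ over $R$, and making that choice functorial is precisely the content of the construction, which you do not supply. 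The paper instead defines $\Phi_n$ over a $p$-nilpotent base by resolving $G$ Zariski-locally by an isogeny of $p$-divisible groups, applying the relative display functor $\Phi_{S/R}$ of \cite{Lau-Smoothness}, truncating and taking the cokernel; the hypothesis $p^tW_n(R)=0$ enters through the resolution $0\to X(t)\to X\xrightarrow{\,p^t\,}X$, which forces $\Phi_n(X(t))=\Phi_R(X)(n)$, and the substance of Proposition \ref{AbLF1p} is verifying axioms (i), (iii), (vi) for the cokernel (via Illusie's lifting theorem, reduction to complete local rings, and descent).

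Second, for the crystal comparison you route through a relative $BT$ functor and the universal vector extension, and you yourself flag the unresolved step: that the Grothendieck--Messing crystal of $BT(\tilde{\mathcal P})$ is computed by $\tilde P$. That compatibility is a genuine theorem (the crystalline comparison of \cite{Zink-DFG}), not something built into the definition of $BT$, so your argument is incomplete precisely at its load-bearing point. The paper's Proposition \ref{Pr:DXDP} needs none of this: since $\mathbb{D}_{\mathcal P}(S)=S\otimes_{\mathcal{W}_n(S)}\tilde P$ for \emph{any} lift $\tilde{\mathcal P}$ of $\mathcal P$ to a relative truncated display for $S\to R$ (well defined by Proposition \ref{Pr-lift}), one simply takes $\tilde{\mathcal P}=\Phi_{S/R}(X)(n)$, whose reduction to $R$ is $\Phi_R(X)(n)=\mathcal P$ and whose underlying module gives the Grothendieck--Messing crystal of $X$ by the very construction of $\Phi_{S/R}$ in \cite{Lau-Smoothness}. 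No morphism of crystals, rigidity argument, or $BT$ functor is needed; the comparison is an identity of the two descriptions of one and the same module.
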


This is proved in Propositions \ref{Pr:DXDP} and \ref{AbLF1p}.
Using Theorem \ref{Theorem2} we prove a version
of Theorem \ref{AbI1t} for algebras over $\mathbb{Z}/p^{m}\mathbb{Z}$;
see Proposition \ref{Pr:BT-Phi2}.

All functors $BT$ and $\Phi$ are exact with respect to the obvious exact
structures on the categories.

\subsubsection*{Additional results and remarks}

1) Let $R$ be a ring with $pR=0$.
For truncated $p$-divisible groups $G_1,G_2$ of level $n$ over $R$
with associated truncated displays $\PPP_i=\Phi_{n,R}(G_i)$ we consider 
the group scheme of vanishing homomorphisms
\[
\uHom^o(G_1,G_2)=\Ker[\uHom(G_1,G_2)\to\Hom(\PPP_1,\PPP_2)],
\]
and similarly for automorphisms. 
Following \cite{Lau-Smoothness},
elementary arguments show that if $G$ is nilpotent of order $e$ and if
$n\ge m(e+1)$, then the truncation homomorphism
\[
\uAut^o(G)\to\uAut (G(m))
\]
is trivial. One deduces 
that there are functors $BT_{m,R}$ as in
Theorem \ref{AbI1t}; see Proposition \ref{Pr:Gamma}.
This proof does not make the functors $BT_{m,R}$ explict, 
but in exchange it avoids the question
of showing directly that $\Phi_{n,R}$ and $BT_{m,R}$ are compatible.

\medskip
2) We obtain a new proof of the equivalence between formal $p$-divisible
groups over $R$ and nilpotent displays over $R$, proved first in \cite{Zink-DFG}
when $R$ is excellent and in \cite{Lau-Display} in general:
The limit over $m$ of Theorem \ref{AbI1t} gives the case $pR=0$, 
and the general case follows easily by deformation theory.

\medskip
3) As a by-product of the proof of Proposition 2.3
we also obtain a purely local proof of the smoothness of the functor $\Phi_n$, 
viewed as a morphism of algebraic stacks over $\FF_p$; 
see Proposition \ref{Pr-lift-disp} and page \pageref{Se:Smoothness}.

\medskip

In an appendix we prove that truncated displays satisfy fpqc descent.

\tableofcontents

\section{The category of truncated displays}

We fix a prime number $p$. Let $R$ be a ring such that $p$ is
nilpotent in $R$.  For fixed $n \in \mathbb{N}$ let $W_n(R)$ be the
ring of truncated Witt vectors.  We consider the ring homomorphism
induced by the 
restriction $\Res: W_{n+1}(R) \rightarrow W_n(R)$ and the
Frobenius $F: W_{n+1}(R) \rightarrow W_n(R)$:
\begin{equation}\label{Ab1e} 
(\Res , F): W_{n+1} (R) \rightarrow W_n(R) \times W_n(R). 
\end{equation} 
The image of this ring homomorphism will be denoted by
$\mathcal{W}_n(R)$. 
The kernel consists of the elements $~^{V^n} [s]$, where $s \in R$ and $ps
= 0$. It follows easily that $R \mapsto \mathcal{W}_n(R)$ is a sheaf for the 
f.p.q.c.-topology; see the Appendix.

The two projections will be denoted by 
\begin{displaymath} 
\Res : \mathcal{W}_n(R) \rightarrow W_n(R), \qquad F: \mathcal{W}_n(R)
\rightarrow W_n(R). 
\end{displaymath}  
If $pR = 0$ then $\Res : \mathcal{W}_n(R) \rightarrow W_n(R)$ is an isomorphism.

Let $I_{n+1} = ~^VW_n(R) \subset W_{n+1} (R)$. This is a 
$\mathcal{W}_n(R)$-module by  
\begin{displaymath} 
\xi ~^V \eta = ~^V( ~^F \xi \eta), \qquad \text{for }\; \xi \in
\mathcal{W}_n(R), \; \eta \in W_n(R). 
\end{displaymath} 
The inverse of the Verschiebung defines a bijective map
$V^{-1}: I_{n+1} \rightarrow W_n(R)$, which is
$F$-linear with respect to $F: \mathcal{W}_n(R)
\rightarrow W_n(R)$. We denote by $\kappa:  I_{n+1} \rightarrow
\mathcal{W}_n (R)$ the map induced by \eqref{Ab1e}. The cokernel of
$\kappa$ is $\mathbf{w}_0 \circ \Res: \mathcal{W}_n(R) \rightarrow
W_n(R) \rightarrow  R$.  

\begin{df}\label{Def-tr-disp}
A truncated display $\mathcal{P}$ of level $n$ over a ring $R$ in which $p$
is nilpotent consists of 
$(P,Q,\iota, \epsilon, F, \dot{F})$. Here $P$ and $Q$ are 
$\mathcal{W}_n (R) $-modules, 
\begin{displaymath} 
\iota: Q \rightarrow P, \qquad \epsilon: I_{n+1} \otimes_{\mathcal{W}_n
  (R)} P \rightarrow Q,
\end{displaymath} 
are $\mathcal{W}_n (R)$-linear maps, and 
\begin{displaymath} 
F: P \rightarrow  W_n(R) \otimes_{\mathcal{W}_n (R)} P, \qquad \dot{F}:
Q \rightarrow W_n(R) \otimes_{\mathcal{W}_n (R)} P
\end{displaymath} 
are $F$-linear maps. (The tensor products are taken with
respect to $\Res$.) 

The following conditions are required:

\begin{enumerate}
\item[(i)] $P$ is a finitely generated projective $\mathcal{W}_n
  (R)$-module. 
\item[(ii)] The maps $\iota \circ \epsilon$ and $\epsilon \circ
  ({\id_{I_{n+1}}} \otimes \iota) $ are the multiplication maps (via
  $\kappa$).
\item[(iii)] The cokernels of $\iota$ and $\epsilon$ are finitely
  generated projective $R$-modules.
\item[(iv)] There is a commutative diagram
\begin{equation}\label{AbDef1e} 
\xymatrix@M+0.2em{
I_{n+1} \otimes_{\mathcal{W}_n (R)} P
\ar[rr]^-{\epsilon}  
\ar[dd]_{\tilde{F}} &  & Q, \ar[lldd]^-{\dot{F}}\\
 &  &\\
 W_n(R) \otimes_{\mathcal{W}_n (R)} P    & &
}
\end{equation}
where $\tilde{F}$ is defined by $\tilde{F}(~^V\eta \otimes x) = \eta
F(x).$
\item[(v)] $\dot{F}(Q) $ generates $W_n(R) \otimes_{\mathcal{W}_n (R)}
  P$ as a $W_n(R)$-module. 
\item[(vi)] We have an exact sequence
\begin{displaymath} 
0 \rightarrow Q/\Image \epsilon \overset{\iota}\longrightarrow
P/\kappa(I_{n+1})P \rightarrow P/\iota(Q) \rightarrow 0.
\end{displaymath} 
(Only the injectivity of the second arrow is a requirement.)
\end{enumerate}
Truncated displays of level $n$ over $R$ form an additive category in
an obvious way, which we denote by $\mathcal{D}_n(R)$.  
\medskip

The surjective $R$-linear map $P/\kappa(I_{n+1})P\to P/\iota(Q)$
is called the Hodge filtration of the truncated display $\mathcal P$. 
\end{df}

When $pR = 0$ we have just $F$-linear maps $F: P \rightarrow P$, 
$\dot{F}: Q \rightarrow P$ as in the case of displays, but $Q$ is not a 
submodule of $P$. 

It follows from the above axioms that
\begin{displaymath}
   F(\iota(y)) = p \dot{F}(y), \quad \text{for}\; y \in Q
\end{displaymath}
using $\eta=1$ and $x=\iota(y)$ in (iv).

\begin{df}\label{AbND1d}
Let $\PPP=(P,Q,\iota,\epsilon,F,\dot F)$ be a truncated diaplay of level $n$ over $R$.
A normal decomposition for $\PPP$ consists of $(T,L,u,v)$
where $T$ and $L$ are finitely generated projective $\mathcal{W}_n(R)$-modules 
with isomorphisms 
\begin{displaymath} 
u:P \cong T \oplus L, \qquad 
v:Q \cong I_{n+1}  \otimes_{\mathcal{W}_n (R)} T
\oplus L,
\end{displaymath} 
such that the maps
\begin{displaymath}
\begin{array}{ccc} 
\iota:  I_{n+1}  \otimes_{\mathcal{W}_n (R)} T \oplus L & \rightarrow & T
\oplus L\\[2mm]
\epsilon:  I_{n+1}  \otimes_{\mathcal{W}_n (R)} T \oplus I_{n+1}
\otimes_{\mathcal{W}_n (R)} L & \rightarrow  & I_{n+1}
\otimes_{\mathcal{W}_n (R)} T \oplus L
\end{array} 
\end{displaymath}
are given as follows: $\iota$ is the multiplicationon on the first summand
and the identity on the second summand, while $\epsilon$
is the identity on the first summand and the
multiplication on the second summand.   
\end{df}

Unlike in the case of displays, the isomorphism $u:T\oplus L\cong P$
does not in general determine $v$.

\begin{prop}\label{Pr-norm-dec}
Every truncated display has a normal decomposition.
\end{prop}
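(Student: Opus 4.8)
The plan is to build a normal decomposition locally on $\Spec R$ and then glue, relying on the projectivity axioms (i) and (iii) together with the structural axioms (ii)--(vi). First I would unwind what data a normal decomposition really records: a splitting $u\colon P\cong T\oplus L$ such that $\kappa(I_{n+1})\otimes T\oplus L$ maps to the submodule generated by $\kappa(I_{n+1})P$ and a complement, i.e.\ $T\oplus L$ should be adapted simultaneously to the Hodge filtration $P/\kappa(I_{n+1})P\to P/\iota(Q)$ and to the filtration coming from $\iota(Q)\subset P$. The module $L$ is meant to lift a basis of $P/\iota(Q)$ (the quotient in the Hodge filtration), while $\iota(Q)$ itself should be recovered as $\kappa(I_{n+1})T\oplus L$. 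So the real content is: choose $L\subset P$ a $\mathcal W_n(R)$-submodule projecting isomorphically onto the projective $R$-module $\bar P:=P/\kappa(I_{n+1})P$ in a way compatible with $P/\iota(Q)$, and such that $\iota$ and $\epsilon$ take the asserted shape.

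Concretely I would proceed as follows. By axiom (iii) the cokernel of $\iota$ is a finitely generated projective $R$-module, so Zariski-locally on $R$ it is free; likewise for $\Coker\epsilon$. Working locally, lift a basis of $P/\iota(Q)$ to elements of $P$, spanning a free $\mathcal W_n(R)$-submodule $L_0\subset P$; then $P=L_0\oplus\iota(Q)'$ for a suitable complement, but one must arrange that this complement is exactly $\kappa(I_{n+1})\otimes_{\mathcal W_n(R)} T$ for a projective $T$. Here axiom (vi), the exactness of $0\to Q/\Image\epsilon\to P/\kappa(I_{n+1})P\to P/\iota(Q)\to0$, is exactly what guarantees that the image of $\iota(Q)$ in $P/\kappa(I_{n+1})P$ is a direct summand with the right quotient, so that one can choose $T$ with $P\cong T\oplus L$ and $\iota(Q)=\kappa(I_{n+1})T\oplus L$ as submodule. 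One then sets $v\colon Q\cong I_{n+1}\otimes T\oplus L$ by declaring it to match $\iota$ and using that $\iota$ restricted to $L$ is injective (again by (vi)) with the stated behaviour, and checks that $\epsilon$ has the required form using axiom (ii), $\iota\circ\epsilon=$ mult.\ and $\epsilon\circ(\id\otimes\iota)=$ mult., which pins down $\epsilon$ on each summand. This gives a normal decomposition over each member of a Zariski (or affine) open cover.

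To globalize, I would invoke descent: a normal decomposition is, in effect, a reduction of structure group, and the ambiguity in choosing $(T,L,u,v)$ over an open set is controlled by a unipotent-type group (the group of "triangular" automorphisms fixing the filtrations), which is smooth with connected fibres, so the obstruction to gluing local choices vanishes. Alternatively, and more cheaply, since $R\mapsto\mathcal W_n(R)$ is an fpqc sheaf (stated in the excerpt) and projective modules are Zariski-locally free, one can simply use that the existence of a normal decomposition is a Zariski-local condition and patch $T$ and $L$ as the kernels/images of the globally defined maps $\iota$ and the natural projections, once one knows local existence — the point is that $\iota(Q)$ and a complement to it adapted to the Hodge filtration are intrinsic up to the said unipotent group.

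I expect the main obstacle to be the bookkeeping around axiom (vi) and the interplay between the two filtrations: one must check that the complement $T$ to $L$ inside $P$ can be chosen so that $\iota(Q)$ becomes \emph{exactly} $\kappa(I_{n+1})T\oplus L$ (not merely contains it), and that $\epsilon$ then automatically takes normal form; verifying this requires carefully tracking that $\kappa(I_{n+1})\otimes_{\mathcal W_n(R)} T\to P$ is injective with image a direct summand, which uses projectivity of $T$ plus the structure of $I_{n+1}$ as a $\mathcal W_n(R)$-module together with (ii) and (vi). The $F$-linear maps $F,\dot F$ play no role in the existence of a normal decomposition and can be ignored throughout.
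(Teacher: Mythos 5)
Your construction of the splitting $P\cong T\oplus L$ matches the paper's first step: lift the projective $R$-modules $P/\iota(Q)$ and $Q/\Image\epsilon$ to projective $\mathcal W_n(R)$-modules, map them to $P$ (via $\iota$ on the second factor), and conclude by axiom (vi) and Nakayama. But your closing claim that ``the $F$-linear maps $F,\dot F$ play no role in the existence of a normal decomposition and can be ignored throughout'' is exactly where the argument breaks down, and it is the one nontrivial point of the proposition. The hard part is not the decomposition of $P$ but the isomorphism $v\colon Q\cong I_{n+1}\otimes T\oplus L$. Note that $Q$ is not a submodule of $P$ and $\iota$ need not be injective, so $v$ cannot be read off from $u$ and the submodule $\iota(Q)\subset P$ --- the paper stresses right after Definition \ref{AbND1d} that $u$ does not determine $v$. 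The natural map $\nu\colon I_{n+1}\otimes T\oplus L\to Q$ is easily seen to be surjective, but its injectivity cannot be extracted from axioms (i)--(iii) and (vi) alone: an element $\sum{}^V\xi_i\otimes t_i$ can die under $\iota\circ\nu$ because $\kappa\colon I_{n+1}\to\mathcal W_n(R)$ has a nontrivial kernel (the elements ${}^{V^n}[s]$ with $ps=0$), so composing with $\iota$ gives no control on the Witt-vector coefficients. The paper instead composes $\nu$ with $\dot F$: axioms (iv) and (v) show that $\dot F\circ\nu=\tilde\Phi\oplus\dot\Phi$ carries a basis of $I_{n+1}\otimes T\oplus L$ to a basis of $W_n(R)\otimes_{\mathcal W_n(R)}P$, which forces $\xi_i=0$; the $L$-component is then handled by $\iota$. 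This is precisely the content of the remark following the proposition: the linear conditions imposed on $(P,Q,\iota,\epsilon)$ in Definition \ref{Def-tr-disp} are strictly weaker than those equivalent to the existence of a normal decomposition, and the gap is closed only in the presence of $(F,\dot F)$. A proof that ignores $F$ and $\dot F$ cannot succeed.

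Two smaller points. The roles of $T$ and $L$ are swapped in your second paragraph: $T$ lifts $P/\iota(Q)$ and $L$ lifts $Q/\Image\epsilon$, with $L$ sitting inside $Q$. And the Zariski-local gluing is unnecessary --- the construction is global from the start, since projective quotients lift to projective $\mathcal W_n(R)$-modules; the paper passes to free modules (Lemma \ref{Le-W-Zar}) only to run the basis argument in the injectivity step, not to glue local decompositions.
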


\begin{proof}
Let  $\mathcal{P} = (P,Q,\iota, \epsilon, F, \dot{F})$ be a  truncated
display of level $n$ over $R$.

We take a projective $\mathcal{W}_n(R)$-module $T$ which lifts
$P/\iota (Q)$ and a projective $\mathcal{W}_n(R)$-module $L$ which
lifts $Q/\Image \epsilon$.  We choose liftings $T \rightarrow P$ and
$L \rightarrow Q$ of the natural projections $P \rightarrow
P/\iota(Q)$ and $Q \rightarrow Q/\Image \epsilon$. 
We consider the natural homomorphism 
\begin{displaymath} 
T \oplus L \rightarrow P,
\end{displaymath} 
induced by $\iota$ on the second summand.  
By (vi) this becomes an isomorphism if we tensor it by $R
\otimes_{\mathcal{W}_n(R)}$, and therefore it is an isomorphism. 

Now we consider the natural homomorphism 
\begin{equation}\label{ND2e} 
\nu: I_{n+1} \otimes_{\mathcal{W}_n(R)} T \oplus L \rightarrow Q,
 \end{equation} 
induced  by $\epsilon$ on the first factor.

We note that $\epsilon: I_{n+1} \otimes L \rightarrow Q$ is the multiplication by (ii). This shows that the image of $\nu$ contains the image of $\epsilon$.   Therefore the homomorphism \eqref{ND2e} is surjective.   We will show it is injective.  

Let us denote by $\Phi: T \rightarrow W_n(R) \otimes_{\mathcal{W}_n(R)} P$ the restriction of $F$ to $T$ and by $\dot{\Phi}: L \rightarrow W_n(R) \otimes_{\mathcal{W}_n(R)} P$ the composite of $\dot{F}$ with $L \rightarrow Q$.

  We denote by $\tilde{\Phi}: I_{n+1} \otimes_{\mathcal{W}_n(R)} T \rightarrow W_n(R) \otimes_{\mathcal{W}_n(R)} P$ the map defined  by $\tilde{\Phi} (~^V\xi \otimes t) = \xi \otimes \Phi(t)$. Then we obtain a commutative diagram 

\begin{displaymath}
   \xymatrix{
I_{n+1} \otimes T \oplus L \ar[rr]^{\nu} \ar[rd]_{\tilde{\Phi} \oplus
  \dot{\Phi}} & & Q \ar[ld]_{\dot{F}}\\ 
 & W_n(R) \otimes_{\mathcal{W}_n(R)} P. & \\
}
\end{displaymath}

\bigskip 
\noindent
We now assume without loss of generality that $L$ and $T$ are free; 
see Lemma \ref{Le-W-Zar} below. 
Let $t_1, \ldots t_d$ be a basis of $T$ and $l_1, \ldots, l_c$ be a basis 
of $L$. Since by the diagram $\tilde{\Phi} \oplus \dot{\Phi}$ is an $F$-linear
epimorphism we conclude that $\Phi(t_1), \ldots, \Phi(t_d),
\dot{\Phi}(l_1), \ldots, \dot{\Phi}(l_c)$ is a basis of $W_n(R)
\otimes_{\mathcal{W}_n(R)} P$.  

Consider an element in the kernel of $\nu$:
\begin{equation}\label{ND3e} 
\sum ~^V \xi_i \otimes t_i + \sum \eta_j l_j \in I_{n+1} \otimes T
\oplus L, \quad \xi_i \in W_n(R), \; \eta \in \mathcal{W}_n(R).
\end{equation} 
Since $\tilde\Phi \oplus \dot{\Phi}$ applied to this element must be zero in
$W_n(R) \otimes_{\mathcal{W}_n(R)} P$ we conclude that $\xi_i = 0$.  

On the other hand the restriction to $\nu$ to $0 \oplus L$ is
injective because $\iota \circ \nu$ is the injection $0 \oplus L
\subset P$. This proves that the element \eqref{ND3e} is zero.
\end{proof}

\begin{lemma}
\label{Le-W-Zar}
Let $S \subset R$ be a multiplicatively closed system. We denote by
$[S] \subset \mathcal{W}_n(R)$ the multipicatively closed system which consists
of the Teichm\"uller representatives $[s] \in \mathcal{W}_n(R)$ of
elements $s \in S$. 
 
Let $R'=R[S^{-1}]$. Then $\mathcal W_n(R')=\mathcal W_n(R)[[S]^{-1}]$.
\end{lemma}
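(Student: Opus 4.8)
The plan is to recall first that the analogous statement for the full Witt ring is classical: for a multiplicatively closed $S\subset R$ one has $W_n(R[S^{-1}]) = W_n(R)[[S]^{-1}]$, where $[S]\subset W_n(R)$ is the set of Teichm\"uller lifts. Indeed, each $[s]$ becomes a unit in $W_n(R[S^{-1}])$ because $[s]$ is a unit as soon as $s$ is (its $0$-th ghost component is a unit and the higher Witt components of the inverse are then determined), so the universal property of localization gives a map $W_n(R)[[S]^{-1}]\to W_n(R[S^{-1}])$; one checks it is bijective by comparing Witt components, or by noting $W_n$ commutes with filtered colimits and with the relevant \'etale-type base change. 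Granting this, the same holds for $W_{n+1}$.

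Next I would transport the statement through the definition of $\mathcal{W}_n(R)$ as the image of $(\Res,F)\colon W_{n+1}(R)\to W_n(R)\times W_n(R)$. Localization is exact, so it commutes with taking images: applying $-\otimes_{W_{n+1}(R)} W_{n+1}(R)[[S]^{-1}]$ (equivalently, inverting $[S]$) to the surjection $W_{n+1}(R)\twoheadrightarrow \mathcal{W}_n(R)$ and to the inclusion $\mathcal{W}_n(R)\hookrightarrow W_n(R)\times W_n(R)$ shows that $\mathcal{W}_n(R)[[S]^{-1}]$ is exactly the image of $W_{n+1}(R)[[S]^{-1}] = W_{n+1}(R[S^{-1}])$ in $(W_n(R)[[S]^{-1}])^{\times 2} = (W_n(R[S^{-1}]))^{\times 2}$ under $(\Res,F)$. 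Here one must check that the Teichm\"uller lifts are compatible under $\Res$ and $F$ — namely $\Res[s]=[s]$ in $W_n$ and $F[s]=[s^p]$ — so that inverting $[S]\subset W_{n+1}(R)$ on the source corresponds to inverting $[S]\subset W_n(R)$ on the target, and similarly that $[S]\subset \mathcal{W}_n(R)$ maps to $[S]\subset W_n(R)$ under $\Res$ so the notation is consistent. Since $F[s]=[s^p]$ and $s$ is invertible iff $s^p$ is, inverting $[S]$ and inverting $F([S])$ give the same localization, which is what makes the $F$-component behave.

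This yields $\mathcal{W}_n(R)[[S]^{-1}] = \Image\big((\Res,F)\colon W_{n+1}(R[S^{-1}])\to W_n(R[S^{-1}])^{\times 2}\big) = \mathcal{W}_n(R[S^{-1}]) = \mathcal{W}_n(R')$, as desired. The one point that needs a little care — and which I expect to be the main (minor) obstacle — is justifying that $(\Res,F)$ stays compatible with localization on the nose, i.e.\ that no extra elements appear in the image after inverting $[S]$; this is handled by the exactness of localization applied to the short description of $\mathcal{W}_n(R)$ together with the kernel description given above (generated by $^{V^n}[s]$ with $ps=0$), which localizes correctly because $^{V^n}[s]$ is again of the same form over $R[S^{-1}]$. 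Everything else is a routine compatibility check on Teichm\"uller representatives.
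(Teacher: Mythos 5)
Your proposal is correct and follows essentially the same route as the paper: reduce to the known localization statement for $W_{n+1}$ and then transport it through the presentation of $\mathcal{W}_n(R)$ as the image of $(\Res,F)$, equivalently as the quotient of $W_{n+1}(R)$ by the kernel $\{\,{}^{V^n}[s]: ps=0\,\}\cong R[p]$ (with $\Frob^n$-twisted module structure), using exactness/flatness of localization. The paper's proof is just a terser version of the same argument, phrased via the kernel rather than the image.
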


\begin{proof}
The corresponding fact for $W_{n+1}$ is known.
The kernel of $W_{n+1}(R)\to\mathcal W_n(R)$ is the module
of $p$-torsion elements $R[p]$, considered as an $R/pR$-module
via $\Frob^n$. The lemma follows.
\end{proof}

\begin{remark}
Proposition \ref{Pr-norm-dec} implies
that Definition \ref{Def-tr-disp}
coincides with the definition of truncated displays in
\cite[Definition 3.4]{Lau-Smoothness}  if $pR=0$. 
Indeed, the conditions on
$(P,Q,\iota,\epsilon)$ imposed here are weaker than 
those of \cite{Lau-Smoothness}, which are equivalent
to the existence of a normal decomposition.
But the difference disappears in the presence of $(F,\dot F)$.
See also Lemma \ref{Le-std-proj} below.
\end{remark}

Any normal decomposition is obtained as follows: Choose liftings
$T' \rightarrow P/\iota(Q)$ and $L' \rightarrow Q/\Image
\epsilon $, to projective $\mathcal{W}_n(R)$-modules and extend them
to homomorphisms $T' \rightarrow P$ and $L' \rightarrow Q$. Then
$\iota: L' \rightarrow P$ is injective and $P = T' \oplus L'$ is a
normal decomposition.  

We note that the maps $F$ and $\dot{F}$ are uniquely determined by
their linearisations:
\begin{displaymath} 
\begin{array}{lccc} 
F^{\sharp} : & W_n(R) \otimes_{F, \mathcal{W}_n (R)} P & \rightarrow &
W_n(R) \otimes_{\mathcal{W}_n (R)} P \\
\dot{F}^{\sharp}:  & W_n(R) \otimes_{F, \mathcal{W}_n (R)}  Q &
\rightarrow & W_n(R) \otimes_{\mathcal{W}_n (R)} P
\end{array} 
\end{displaymath} 
 
As in the case of displays, from the normal decomposition we obtain an
isomorphism of $W_n(R)$-modules
\begin{equation}\label{Ab4e}
F^{\sharp} \oplus \dot{F}^{\sharp} : (W_n(R) \otimes_{F, \mathcal{W}_n
  (R)} T) \oplus (W_n(R) \otimes_{F, \mathcal{W}_n (R)} L) \rightarrow
W_n(R) \otimes_{\mathcal{W}_n (R)} P.
\end{equation} 

We write the last map as a matrix 
\begin{equation}\label{AbStr-e}
   \left(
\begin{array}{cc}
A & B\\
C & D
\end{array}
\right),
\end{equation}
where 

\begin{displaymath}   
\begin{array}{ccc}
A: W_n \otimes_{F,\mathcal{W}_n} T  \rightarrow  W_n
\otimes_{\mathcal{W}_n} T, &&
B : W_n \otimes_{F,\mathcal{W}_n} L
 \rightarrow  W_n \otimes_{\mathcal{W}_n} T,\\ 
C : W_n \otimes_{F,\mathcal{W}_n} T   \rightarrow  W_n
\otimes_{\mathcal{W}_n} L, &&
D : W_n \otimes_{F,\mathcal{W}_n} L  \rightarrow  W_n
\otimes_{\mathcal{W}_n} L, 
\end{array}
\end{displaymath}
are $W_n(R)$-linear maps. 

Conversely, by the following construction, 
a matrix \eqref{AbStr-e} which is an isomorphism of
$W_n(R)$-modules defines a truncated display of level $n$. 

We set $\dot{\sigma} = V^{-1}: I_{n+1} \rightarrow W_n(R)$ 
and consider this as an isomorphism von $\mathcal{W}_n(R)$-modules: 
\begin{displaymath}
   I_{n+1} \rightarrow W_n(R)_{[F]},
\end{displaymath}
where the last index denotes restriction of scalars by $F$. 
For an arbitrary $\mathcal{W}_n(R)$-module, $\dot{\sigma} $ induces an
isomorphism denoted by the same letter
\begin{displaymath}
   \dot{\sigma}: I_{n+1} \otimes_{\mathcal{W}_n(R)} T \rightarrow
   W_n(R) \otimes_{F,\mathcal{W}_n(R)} T.
\end{displaymath}
This is $F$-linear with respect to $F: \mathcal{W}_n(R)
\rightarrow W_n(R)$. 

We also use the notation $\sigma$ for the
$F$-linear maps
\begin{displaymath}
\begin{array}{rcc}
\sigma: T & \rightarrow & W_n \otimes_{F,\mathcal{W}_n(R)} T\\
\ell & \mapsto & 1 \otimes \ell.
\end{array}
\end{displaymath}

To obtain a truncated display of level $n$ from a matrix
\eqref{AbStr-e} we set 
\begin{displaymath}
   P = T \oplus L, \quad Q = I_{n+1} \otimes T \oplus L.
\end{displaymath}
Then we have obvious maps $\iota$ and $\epsilon$ as in Proposition
\ref{Pr-norm-dec}. For vectors
\begin{displaymath}
\left(
\begin{array}{c}
t\\
\ell
\end{array}
\right) \in 
 T \oplus L, \quad  \left(
\begin{array}{c}
y\\
\ell
\end{array}
\right) \in  I_{n+1} \otimes T \oplus L
\end{displaymath} 
We define $F$ and $\dot{F}$ as follows
\begin{equation}\label{AbSt1e}
   F \left(
\begin{array}{c}
t\\
\ell
\end{array}
\right) =
   \left(
\begin{array}{cc}
A & pB\\
C & pD
\end{array}
\right)
\left(
\begin{array}{c}
\sigma(t)\\
\sigma(\ell)
\end{array}
\right)
\end{equation}

\begin{equation}\label{AbSt2e}
 \dot{F} \left(
\begin{array}{c}
y\\
\ell
\end{array}
\right) =
   \left(
\begin{array}{cc}
A & B\\
C & D
\end{array}
\right)
\left(
\begin{array}{c}
\dot{\sigma}(y)\\
\sigma(\ell)
\end{array}
\right)
\end{equation}

\begin{df}
Let\/ $T$ and $L$ be finitely generated projective $\mathcal{W}_n(R)$-modules.
Assume we are given a matrix \eqref{AbStr-e} of homomorphisms $A,B,C,D$ 
which is invertible. If we define $F$, $\dot{F}$ by 
the formulas \eqref{AbSt1e} 
and \eqref{AbSt2e} we obtain a truncated display 
$\mathcal{S}(T,L;A,B,C,D)$ level $n$ which we call a standard truncated 
display of level $n$ over the ring $R$.  
\end{df}

We will now describe a homomorphism of standard truncated displays:
\begin{displaymath}
\alpha: \mathcal{S}(T,L;A,B,C,D) \rightarrow \mathcal{S}(T',L';A',B',C',D'). 
\end{displaymath}
We write $P = T \oplus L$ and so on. The morphism $\alpha$ is given by
two module homomorphism $\alpha_0: P \rightarrow P'$ and 
$\alpha_1: Q \rightarrow Q'$ which have to be compatible with the maps 
$\iota, \epsilon$ and $F, \dot{F}$. From the compatability with the first two 
maps we  see that there are homomorphisms
\begin{equation}\label{AbSt1e1}
\begin{array}{ll}
X \in \Hom_{\mathcal{W}_n(R)} (T, T'), & U \in 
\Hom_{\mathcal{W}_n(R)} (L, I_{n+1} \otimes_{\mathcal{W}_n(R)} T'),\\
Z \in \Hom_{\mathcal{W}_n(R)} (T, L'), & Y \in \Hom_{\mathcal{W}_n(R)} (L, L'),
\end{array}
\end{equation}  
such that the homomorphisms $\alpha_0$ and $\alpha_1$ are given by the formulas:
\begin{equation}\label{AbSt3e}
\alpha_1 
\left(
\begin{array}{c}
\xi \otimes t\\
     \ell
\end{array}
\right) = \left(
\begin{array}{c}
\xi \otimes Xt + U\ell\\
\kappa(\xi)Zt + Y\ell
\end{array}
\right) \in I_{n+1} \otimes_{\mathcal{W}_n(R)} T' \oplus L',
\end{equation}

\begin{equation}\label{AbSt4e}
\alpha_0  \left( 
\begin{array}{c}
t\\
\ell
\end{array}
\right) = \left( 
\begin{array}{cc}
X & \hat{U}\\
Z & Y
\end{array}
\right) 
\left( 
\begin{array}{c}
t\\
\ell
\end{array}
\right),
\end{equation}
for $t \in T$, $\ell \in L$, $\xi \in I_{n+1}$.
Here $\hat U$ is the composition of $U$ with the multiplication
$I_{n+1}\otimes_{\mathcal W_n(R)}T'\to T'$.
We consider the map 
\begin{displaymath}
\dot{\sigma} \oplus \sigma : (I_{n+1} \otimes_{\mathcal{W}_n(R)} T') \oplus L' 
\rightarrow W_n(R) \otimes_{F, \mathcal{W}_n(R)} T' \oplus 
W_n(R) \otimes_{F, \mathcal{W}_n(R)} L'.
\end{displaymath}
If we apply this to the vector \eqref{AbSt3e} we obtain:
\begin{displaymath}
\left( 
\begin{array}{cc}
\sigma(X) & \dot{\sigma}(U)\\
p\sigma(Z) & \sigma(Y)
\end{array}
\right) \left( 
\begin{array}{c}
\dot{\sigma}(\xi)\sigma(t)\\
\sigma  (\ell) 
\end{array}
\right) \in W_n(R) \otimes_{F, \mathcal{W}_n(R)} T' \oplus 
W_n(R) \otimes_{F, \mathcal{W}_n(R)} L'.
\end{displaymath} 
Here we use the notation 
\begin{displaymath}
\sigma (X) = \id_{W_n(R)} \otimes_{\mathcal{W}_n(R)} X: 
W_n(R)\otimes_{F, \mathcal{W}_n(R)} T \rightarrow W_n(R)\otimes_{F, \mathcal{W}_n(R)} T',
\end{displaymath}
and similarly $\sigma(Z)$ and $\sigma(Y)$. The composition
\begin{displaymath}
L \overset{U}{\rightarrow} I_{n+1} \otimes_{\mathcal{W}_n(R)} T'
\overset{\dot{\sigma}}{\rightarrow} W_n(R) \otimes_{F, \mathcal{W}_n(R)} T'
\end{displaymath}
is linear with respect to $F: \mathcal{W}_n(R) \rightarrow W_{n}(R)$. 
Its linearisation is 
\begin{equation}
\label{Eq-sigmadotU}
\dot{\sigma}(U): W_n(R) \otimes_{F, \mathcal{W}_n(R)} L \rightarrow 
W_n(R) \otimes_{F, \mathcal{W}_n(R)} T'.
\end{equation}
The pair $\alpha_0, \alpha_1$ is a morphism of truncated displays iff
the following diagram is commutative:
\begin{equation}\label{AbSt5e}
\begin{CD}
Q @>{\dot{F}}>> W_n(R) \otimes_{\mathcal{W}_n(R)} P\\
@V{\alpha_1}VV @VV{\id \otimes \alpha_0}V\\
Q' @>{\dot{F}'}>> W_n(R) \otimes_{\mathcal{W}_n(R)} P'
\end{CD}
\end{equation}
We have just computed
\begin{displaymath}
\dot{F}'\circ \alpha_1 \left( 
\begin{array}{c}
\xi \otimes t\\
\ell
\end{array}
\right) = 
\left( 
\begin{array}{cc}
A' & B' \\
C' & D' 
\end{array}
\right) 
\left( 
\begin{array}{cc}
\sigma(X) & \dot{\sigma}(U)\\
p\sigma(Z) & \sigma(Y)
\end{array}
\right) \left( 
\begin{array}{c}
\dot{\sigma}(\xi)\sigma(t)\\
\sigma  (\ell) 
\end{array}
\right)
\end{displaymath}
If we tensor \eqref{AbSt4e} with $W_n(R) \otimes_{\mathcal{W}_n(R)}$ we obtain 
the matrix
\begin{displaymath}
\left( 
\begin{array}{cc}
\bar{X} & \bar{U}\\
\bar{Z} & \bar{Y}  
\end{array}
\right). 
\end{displaymath}
Then the commutativity of \eqref{AbSt5e} is equivalent with the equation
\begin{equation}\label{AbSt7e}
\left( 
\begin{array}{cc}
A' & B' \\
C' & D' 
\end{array}
\right) 
\left( 
\begin{array}{cc}
\sigma(X) & \dot{\sigma}(U)\\
p\sigma(Z) & \sigma(Y)
\end{array}
\right) =
\left( 
\begin{array}{cc}
\bar{X} & \bar{U}\\
\bar{Z} & \bar{Y}  
\end{array}
\right)
\left( 
\begin{array}{cc}
A & B \\
C & D 
\end{array}
\right).
\end{equation}

Let us summarise the preceding considerations.

\begin{df}
We define the category $s\,\mathcal{D}_n(R)$ 
of standard truncated displays of level $n$ over $R$ as follows.
Its objects are data $(T,L;A,B,C,D)$ as above, and a morphism 
\begin{displaymath}
(T,L;A,B,C,D) \rightarrow (T',L';A',B',C',D').   
\end{displaymath}
is a matrix of homomorphisms \eqref{AbSt1e1} which satisfies the equation 
\eqref{AbSt7e}. 
\end{df}

\begin{prop}\label{Pr-std-rep}
There is a functor 
\begin{displaymath}
\mathcal S:s\,\mathcal{D}_n(R) \rightarrow \mathcal{D}_n(R),
\end{displaymath}
and this functor is an equivalence of categories. \qed
\end{prop}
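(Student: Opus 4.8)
The plan is to show that the functor $\mathcal S$ is essentially surjective, full, and faithful, and then invoke the standard criterion for an equivalence of categories. Essential surjectivity is almost immediate from Proposition \ref{Pr-norm-dec}: given a truncated display $\PPP=(P,Q,\iota,\epsilon,F,\dot F)$ we choose a normal decomposition $(T,L,u,v)$, which identifies $P$ with $T\oplus L$ and $Q$ with $I_{n+1}\otimes T\oplus L$ so that $\iota$ and $\epsilon$ take the prescribed standard form. Using this identification, the map \eqref{Ab4e} becomes an invertible $W_n(R)$-linear endomorphism of $(W_n\otimes_F T)\oplus(W_n\otimes_F L)$, and its matrix is precisely a datum $(A,B,C,D)$ as in \eqref{AbStr-e}; one then checks that the formulas \eqref{AbSt1e} and \eqref{AbSt2e} recover $F$ and $\dot F$, so that $\PPP\cong\mathcal S(T,L;A,B,C,D)$. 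The point worth verifying here is that the normalisations in \eqref{AbSt1e}--\eqref{AbSt2e} (the extra factors of $p$ in the $F$-formula versus the $\dot F$-formula) are consistent with $F\iota=p\dot F$ and with $\tilde F$ as in axiom (iv); this is a short bookkeeping check with $\dot\sigma$ and $\sigma$.

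For fullness and faithfulness, the key computation is already essentially done in the text leading up to \eqref{AbSt7e}. I would argue as follows. Let $\alpha:\mathcal S(T,L;\ldots)\to\mathcal S(T',L';\ldots)$ be a morphism of truncated displays, i.e.\ a pair $(\alpha_0,\alpha_1)$ compatible with $\iota,\epsilon,F,\dot F$. Compatibility with $\iota$ and $\epsilon$, together with the explicit standard shape of these maps, forces $\alpha_0,\alpha_1$ to have the form \eqref{AbSt3e}--\eqref{AbSt4e} for a unique quadruple $(X,U,Z,Y)$ as in \eqref{AbSt1e1}: here one uses that $T$ lifts $P/\iota(Q)$ and $L$ lifts $Q/\Image\epsilon$ to read off the ``diagonal'' blocks, and that $\iota$ restricted to $L$ is injective (hence $\alpha_0$ is determined by $\alpha_1$, and vice versa through $\hat U$) to see that no extra freedom remains. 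Then the remaining condition, compatibility with $\dot F$, i.e.\ commutativity of \eqref{AbSt5e}, was shown above to be equivalent to the matrix equation \eqref{AbSt7e}. (Compatibility with $F$ is then automatic: it follows from compatibility with $\dot F$ together with $F\iota=p\dot F$ and the fact that $\dot F(Q)$ generates $W_n\otimes P$, axiom (v); alternatively one notes $F$ is recovered from $\dot F$ via the normal decomposition.) This establishes a bijection between $\Hom_{\mathcal D_n(R)}(\mathcal S(\ldots),\mathcal S(\ldots))$ and the set of matrices \eqref{AbSt1e1} satisfying \eqref{AbSt7e}, which is by definition $\Hom_{s\mathcal D_n(R)}$; functoriality of this bijection in both variables is clear since composition on both sides is given by the obvious matrix multiplication. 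Hence $\mathcal S$ is fully faithful.

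Combining essential surjectivity with full faithfulness gives that $\mathcal S$ is an equivalence of categories, which is the assertion of Proposition \ref{Pr-std-rep}.

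The main obstacle, such as it is, is bureaucratic rather than conceptual: one must be careful that the two module homomorphisms $\alpha_0$ and $\alpha_1$ determine \emph{and are determined by} the single quadruple $(X,U,Z,Y)$, without hidden redundancy or missing components. This uses axiom (vi) (the short exact sequence, which makes $T\oplus L\to P$ an isomorphism after $\otimes R$, hence an isomorphism) and the injectivity of $\iota|_L$; a slightly delicate point is that, unlike for displays, $u$ does not determine $v$ (as remarked after Definition \ref{AbND1d}), so one genuinely works with the pair $(\alpha_0,\alpha_1)$ and checks the compatibility conditions link them correctly. Once this is set up, everything else is the linear-algebra manipulation with $\sigma$, $\dot\sigma$ and $\kappa$ already carried out in the excerpt.
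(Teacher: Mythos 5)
Your proposal is correct and follows essentially the same route as the paper: the Proposition is stated with \qed precisely because its proof is the preceding discussion, namely essential surjectivity via the normal decomposition of Proposition \ref{Pr-norm-dec} together with the identification of morphisms with quadruples $(X,U,Z,Y)$ satisfying \eqref{AbSt7e}. Your additional remarks (that $F$-compatibility follows from $\dot F$-compatibility via axiom (v), and that one must track the pair $(\alpha_0,\alpha_1)$ since $u$ does not determine $v$) are exactly the bookkeeping points the paper leaves implicit.
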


Remark: We have not defined explicitly the composition in the category $s\,\mathcal{D}_n(R)$,
but the composition is uniquely determined by the requirement that $\mathcal S$ is a functor.
See also Definition \ref{Def:MnR} below.

\subsection{Base change and truncation functors}

Let $\phi: R \rightarrow S$ be a homomorphism of rings in which $p$ is nilpotent. Let 
$\mathcal{S}(T,L;A,B,C,D)$ be a  standard truncated display of level $n$. 
We set $\tilde{T} = \mathcal{W}_n(S) \otimes_{\mathcal{W}_n(R)} T$ and  
$\tilde{L} = \mathcal{W}_n(S) \otimes_{\mathcal{W}_n(R)} L$. By tensoring the maps
\eqref{AbStr-e} by $W_n(S)\otimes_{W_n(R)}$ we obtain an object of 
$s\,\mathcal{D}_n(S)$. It is easily checked that this gives a functor
\begin{displaymath}
\beta_s: s\,\mathcal{D}_n(R) \rightarrow s\,\mathcal{D}_n(S),
\end{displaymath}
which is the base change functor for standard truncated displays. Therefore
by Proposition \ref{Pr-std-rep} we also get a base change functor 
\begin{equation}\label{AbBC1e}
\beta: \mathcal{D}_n(R) \rightarrow \mathcal{D}_n(S).  
\end{equation}
To make this canonical one can proceed in the standard way. Let $\mathcal{P} 
\in \mathcal{D}_n(R)$. Then we consider the category $\mathcal{C}$ whose 
objects are isomorphisms $\mathcal{S} \rightarrow \mathcal{P}$, where 
$\mathcal{S} \in s\,\mathcal{D}_n(R)$, and we define $\beta(\mathcal{P})$
as the projective limit over $\mathcal{C}$:
\begin{displaymath}
\beta(\mathcal{P}) = \underset{\leftarrow }{\lim}\, \beta_s(\mathcal{S}). 
\end{displaymath} 
If $\mathcal{P} = (P,Q,\iota, \epsilon, F, \dot{F})$ we write 
$\beta(\mathcal{P}) = (P_S, Q_S, \iota_S, \epsilon_S, F_S, \dot{F}_S)$. 
We note that there is a canonical isomorphism $P_S = 
\mathcal{W}_n(S) \otimes_{\mathcal{W}_n(R)} P$.  

In the same way we can define truncation functors
\begin{equation}\label{Abtrunc1e}
\tau_m: \mathcal{D}_{m+1}(R) \rightarrow \mathcal{D}_m(R).
\end{equation}
They are compatible with the base change functors. We will write
$\mathcal{P}(m) := \tau_m(\mathcal{P})$. More generally, if 
$\mathcal{P}$ is a truncated display of level $n \geq m$ we will
consider the truncation $\mathcal{P}(m)$.  

One could also define the truncation and base change functors by a universal
property without referring to standard representations (but the proof that
the functors exist uses Proposition \ref{Pr-std-rep}).
Namely, for $\phi:R\to S$ as above and for truncated displays $\PPP$ over $R$
and $\PPP'$ over $S$ of level $n$ one defines homomorphisms $\PPP\to\PPP'$
over $\phi$ in the obvious way. Then we have a universal homomorphism 
$\PPP\to\beta(\PPP)$ over $\phi$. A similar remark applies to the truncation
functors.

\subsection{Matrix description}

For simplicity we will often assume that the modules $T$ resp.\ $L$ are free
of rank $d$ and $c$; see Lemma \ref{Le-W-Zar}. 
We fix isomorphisms $T \cong \mathcal{W}(R)^d$
and $L \cong \mathcal{W}(R)^c$. Then a standard truncated display with
normal decomposition given by $T$ and $L$ is 
determined by the invertible matrix 
\begin{displaymath} 
\left(
\begin{array}{cc}
A & B\\
C & D
\end{array}
\right) 
\in \GL_{d+c} (W_n(R))
\end{displaymath} 
which defines the map \eqref{Ab4e}. 
For $~^V \underline{\eta} \in I_{n+1}^d$, $\underline{\zeta} \in
\mathcal{W}_n(R)^d$, and $\underline{\xi} \in
\mathcal{W}_n(R)^c$, we consider the vectors: 
\begin{displaymath} 
\left(
\begin{array} {r}
^V \underline{\eta}\\
\underline{\xi}\\
\end{array} 
\right)
\in I_{n+1} \otimes_{\mathcal{W}_n} T \oplus L = Q,
\qquad
\left(
\begin{array} {r}
\underline{\zeta}\\
\underline{\xi}\\
\end{array} 
\right)
\in T\oplus L=P.
\end{displaymath} 
Then $F$ and $\dot{F}$ can be written in matrix form

\begin{displaymath} 
F (
\left(
\begin{array} {r}
\underline{\zeta}\\
\underline{\xi}\\
\end{array} 
\right)
) = 
\left(
\begin{array}{cc}
A & pB\\
C & pD
\end{array}
\right) 
\left(
\begin{array} {r}
^F\underline{\zeta}\\
^F \underline{\xi}\\
\end{array} 
\right).
\end{displaymath}

\begin{displaymath} 
\dot{F} (
\left(
\begin{array} {r}
^V \underline{\eta}\\
\underline{\xi}\\
\end{array} 
\right)
) = 
\left(
\begin{array}{cc}
A & B\\
C & D
\end{array}
\right) 
\left(
\begin{array} {r}
\underline{\eta}\\
^F \underline{\xi}\\
\end{array} 
\right).
\end{displaymath}

As for displays we represent morphisms by matrices.
Let $\mathcal{P}'$ be a second truncated display with a given normal
decomposition 
\begin{displaymath} 
P' = T' \oplus L', \quad Q' = I_{n+1}  \otimes_{\mathcal{W}_n (R)} T'
\oplus L'.
\end{displaymath}
We fix isomorphisms $T' \cong \mathcal{W}_n(R)^{d'}$  and $L' \cong
\mathcal{W}_n(R)^{c'}$. We assume that $\mathcal{P}'$ is defined by the
matrix
\begin{displaymath} 
\left(
\begin{array}{cc}
A' & B'\\
C' & D'
\end{array}
\right) 
\in \GL_{d'+c'} (W_n(R)).
\end{displaymath} 

A morphism $\alpha:\mathcal P\to\mathcal P'$ is given by a matrix
\begin{equation}\label{Ab3e} 
\left(
\begin{array}{cc}
X & ^V \mathfrak{J}\\
Z & Y
\end{array}
\right),
\end{equation}  
 where $X \in \Hom_{\mathcal{W}_n (R)} (T,T')$, $Y \in
 \Hom_{\mathcal{W}_n (R)}(L,L')$, $Z \in \Hom_{\mathcal{W}_n (R)} (T, L')$
 $^V \mathfrak{J} \in \Hom_{\mathcal{W}_n (R)} (L, I_{n+1} \otimes T')$. The matrices
 $X$, $Y$, $Z$ have coefficients in $\mathcal{W}_n(R)$ and $\mathfrak{J}$ has
 coefficients in $W_n(R)$.

The maps $Q \rightarrow Q'$ and $P \rightarrow P'$ induced by $\alpha$ 
are given by the matrices
\begin{displaymath}
\left(
\begin{array}{cc}
X & ~^V \mathfrak{J}\\
Z & Y
\end{array}
\right) :
I_{n+1}  \otimes_{\mathcal{W}_n (R)} T \oplus L \rightarrow I_{n+1}
\otimes_{\mathcal{W}_n (R)} T' \oplus L'
\end{displaymath}
and
\begin{displaymath}
\left(
\begin{array}{cc}
X & \kappa(^V \mathfrak{J})\\
Z & Y
\end{array}
\right) :
T \oplus L \rightarrow  T' \oplus L',
\end{displaymath}
where the first matrix needs a little interpretation. 

The matrix \eqref{Ab3e} defines a morphism of truncated displays
iff the following equation holds:
\begin{equation}\label{Ab2e} 
\left(
\begin{array}{cc}
A' & B'\\
C' & D'
\end{array}
\right) 
\left(
\begin{array}{rr}
^F\!X &  \mathfrak{J}\\
p\,^F\! Z & ^F Y
\end{array}
\right)
=
\left(
\begin{array}{cc}
\Res(X) & ^V \bar{\mathfrak{J}}\\
\Res(Z) & \Res(Y)
\end{array}
\right)
\left(
\begin{array}{cc}
A & B\\
C & D
\end{array}
\right) 
\end{equation} 
Here $\bar{\mathfrak{J}}$ is the restriction of $\mathfrak{J}$ to a matrix with coefficients
in $W_{n-1} (R)$.

This equation shows in particular that $\mathfrak{J}$ is already uniquely
determined by $X,Y,Z$ and $\kappa(^V \mathfrak{J})$. Therefore a morphism of
truncated displays is already uniquely determined by the induced
$\mathcal{W}_n$-module homomorphism $P \rightarrow P'$, 
i.e.\ we have proved the following.

\begin{lemma}
\label{Le-faithful}
For two truncated displays $\PPP$ and $\PPP'$ of level $n$ over $R$
the forgetful homomorphism
\begin{equation} 
\Hom_{\mathcal{D}_n} (\mathcal{P},\mathcal{P}') \to
\Hom_{\mathcal{W}_n(R)}(P,P').
\end{equation} 
is injective.
\qed
\end{lemma}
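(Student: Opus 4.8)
The plan is to reduce to the matrix description already established above and read off injectivity from equation \eqref{Ab2e}. First I would observe that the statement is local on $\Spec R$ in the sense that it suffices to check injectivity after passing to a Zariski covering: by Lemma \ref{Le-W-Zar}, localisation commutes with $\mathcal{W}_n$, and both $\Hom_{\mathcal{D}_n}(\PPP,\PPP')$ and $\Hom_{\mathcal{W}_n(R)}(P,P')$ are compatible with such localisation, so we may assume that $\PPP$ and $\PPP'$ admit normal decompositions with \emph{free} modules $T,L,T',L'$. This is exactly the setting of the matrix description: by Proposition \ref{Pr-std-rep} we may replace $\PPP,\PPP'$ by standard truncated displays $\mathcal{S}(T,L;A,B,C,D)$ and $\mathcal{S}(T',L';A',B',C',D')$, and a morphism $\alpha$ is given by a matrix \eqref{Ab3e} subject to \eqref{Ab2e}.

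Next I would unwind what the induced map $\alpha_0\colon P\to P'$ is in these coordinates. By \eqref{AbSt4e} it is the matrix $\bigl(\begin{smallmatrix}X & \hat U\\ Z & Y\end{smallmatrix}\bigr)$, equivalently $\bigl(\begin{smallmatrix}X & \kappa(^V\mathfrak J)\\ Z & Y\end{smallmatrix}\bigr)$ in the notation of the matrix description. So knowing $\alpha_0$ is the same as knowing $X$, $Y$, $Z$ and the single entry $\kappa(^V\mathfrak J)\in\Hom_{\mathcal{W}_n(R)}(L,T')$. It remains to see that $X,Y,Z$ together with $\kappa(^V\mathfrak J)$ determine $\mathfrak J$ itself, i.e.\ determine the full entry $^V\mathfrak J\in\Hom_{\mathcal{W}_n(R)}(L,I_{n+1}\otimes T')$ in \eqref{Ab3e}; for then $\alpha_1$ is determined as well by its matrix, and hence $\alpha$ is determined by $\alpha_0$.

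The key step — and the only place where there is anything to prove — is to extract $\mathfrak J$ from equation \eqref{Ab2e}. Multiplying \eqref{Ab2e} on the right by $\bigl(\begin{smallmatrix}A & B\\ C & D\end{smallmatrix}\bigr)^{-1}$, which exists since the defining matrix of a standard display is invertible over $W_n(R)$, expresses the block matrix $\bigl(\begin{smallmatrix}{}^F\!X & \mathfrak J\\ p\,{}^F\!Z & {}^F Y\end{smallmatrix}\bigr)$ entirely in terms of $X,Y,Z$ (via their restrictions and $\bar{\mathfrak J}$). In particular $\mathfrak J$, as a matrix with coefficients in $W_n(R)$, is a fixed $W_n(R)$-linear combination of entries of $\Res(X),{}^V\bar{\mathfrak J},\Res(Z),\Res(Y)$ and the inverse matrix; but $^V\bar{\mathfrak J}$ is recovered from $\kappa(^V\mathfrak J)$ (indeed $\kappa$ is the map induced by \eqref{Ab1e}, whose two components are $\Res$ and $F$ composed with $V^{-1}$, and one checks these pin down $^V\mathfrak J$ on $I_{n+1}\subset W_{n+1}(R)$ — equivalently $\bar{\mathfrak J}=\Res\circ\mathfrak J$). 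Hence $\mathfrak J$ is a function of $X,Y,Z,\kappa(^V\mathfrak J)$, i.e.\ of $\alpha_0$. If two morphisms $\alpha,\alpha'$ induce the same $\alpha_0=\alpha'_0$, they therefore have the same $X,Y,Z$ and the same $\mathfrak J$, so the same matrix \eqref{Ab3e}, so $\alpha=\alpha'$. I expect the only subtlety to be bookkeeping with the $\Res$/$F$ and $V$/$V^{-1}$ conventions between levels $n$ and $n+1$; once those are lined up the argument is the short computation already sketched right before the lemma in the text, and the proof reduces to the single sentence ``This equation shows in particular that $\mathfrak J$ is already uniquely determined by $X,Y,Z$ and $\kappa(^V\mathfrak J)$.''
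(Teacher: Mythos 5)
Your proposal is correct and follows the paper's own argument: after reducing to free normal decompositions, equation \eqref{Ab2e} determines $\mathfrak J$ from $X,Y,Z$ and $\bar{\mathfrak J}$, the latter being read off from the $\Res$-component of $\kappa({}^V\mathfrak J)$, so $\alpha_1$ and hence $\alpha$ are determined by $\alpha_0$. Two small corrections: to isolate $\bigl(\begin{smallmatrix}{}^F\!X&\mathfrak J\\ p{}^F\!Z&{}^FY\end{smallmatrix}\bigr)$ you must multiply \eqref{Ab2e} on the left by $\bigl(\begin{smallmatrix}A'&B'\\ C'&D'\end{smallmatrix}\bigr)^{-1}$ (not on the right by the unprimed inverse), and your parenthetical claim that $\kappa$ ``pins down ${}^V\mathfrak J$'' on $I_{n+1}$ is false --- $\kappa$ restricted to $I_{n+1}$ has kernel $\{{}^{V^n}[s]:ps=0\}$, which is precisely why the detour through \eqref{Ab2e} is needed --- but you only actually use the correct statement that the $\Res$-component of $\kappa({}^V\mathfrak J)$ recovers $\bar{\mathfrak J}$.
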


\begin{df}
\label{Def:MnR}
Let $\mathcal{M}_n(R)$ be the category whose objects are
invertible block matrices:
\begin{displaymath} 
\mathfrak{F} = \left(
\begin{array}{cc}
A & B\\
C & D
\end{array}
\right) \in GL_h(W_n(R)), 
\end{displaymath} 
where $A$ and $D$ are square matrices of arbitrary size and 
whose morphisms $\mathfrak{F} \rightarrow
\mathfrak{F}'$ are block matrices $\mathfrak{X}$ of the form
\eqref{Ab3e} which satisfy the relation \eqref{Ab2e}. If
$\mathfrak{X}': \mathfrak{F}' \rightarrow \mathfrak{F}''$ is a second
morphism then the composite $\mathfrak{X}'\circ \mathfrak{X}$ is the
matrix
\begin{displaymath} 
\left(
\begin{array}{cc}
X' & ^V \mathfrak{J}'\\
Z' & Y'
\end{array}
\right) \circ 
\left(
\begin{array}{cc}
X & ^V \mathfrak{J}\\
Z & Y
\end{array}
\right) :=
\left(
\begin{array}{cc}
X'X + \kappa(^V\mathfrak{J}')Z & ^V(^F\!X' \mathfrak{J} + \mathfrak{J}'^{\,F}Y)\\
Z'X + Y'Z  & Z'\kappa(^V\mathfrak{J}) + Y'Y
\end{array}
\right).
\end{displaymath}
\end{df}

We have a fully faithful functor
$\mathcal{M}_n(R) \rightarrow s\,\mathcal{D}_n(R)
\xrightarrow\sim \mathcal{D}_n(R)$.
The essential image consists of the truncated
displays such that the modules in the exact sequence (vi) of
Definition \ref{Def-tr-disp} are free $R$-modules.  

\subsection{Nilpotent truncated displays}

Let $\mathcal{P}$ be a truncated display of level $n$ over a ring $R$. 
There is a unique homomorphism of $W_n(R)$-modules
\begin{equation}\label{AbVis1e}
V^{\sharp}: W_n(R)\otimes_{\mathcal{W}_n(R)} P \rightarrow  
W_n(R)\otimes_{F, \mathcal{W}_n(R)} P, 
\end{equation}
such that for each $y \in Q$
\begin{displaymath}
V^{\sharp}(\dot{F}(y)) = 1 \otimes \iota (y).
\end{displaymath}
The existence follows as in the case of displays by using a normal 
decomposition. One deduces from the last equation that
\begin{displaymath}
V^{\sharp}(\xi F(x)) = p\xi \otimes x.
\end{displaymath}
We assume now that $pR = 0$. Then we have $W_n(R) = \mathcal{W}_n(R)$
and $I_n = \kappa (I_{n+1})$. The homomorphism \eqref{AbVis1e} takes
the form 
\begin{displaymath}
V^{\sharp}: P \rightarrow W_n(R) \otimes_{F,W_n(R)} P.
\end{displaymath}
Iterating the last morphism we obtain for each natural number $N$:
\begin{equation}\label{AbVis2e}
(V^{N})^{\sharp}: P \rightarrow W_n(R) \otimes_{F^{N},W_n(R)} P.
\end{equation}
In the case $pR=0$ we say that the truncated display $\mathcal P$ is
nilpotent if for large $N$ the image  
of the map \eqref{AbVis2e} is zero modulo $I_n$. Equivalently we can say
that the map induced by \eqref{AbVis2e}
\begin{displaymath}
(V^{N})^{\sharp}: P/I_nP \rightarrow R \otimes_{\Frob^{N}, R} P/I_nP 
\end{displaymath}
is zero.
In the case of general $R$ we call $\mathcal P$ nilpotent if its base
change to $R/pR$ is nilpotent.
Assume that $pR=0$.
By definition the image of $V^{\sharp}$ coincides with the image of
the homomophism $W_n(R) \otimes_{F,W_n(R)} Q \rightarrow 
W_n(R) \otimes_{F,W_n(R)} P$ induced by $\iota$. This implies that the map 
$V^{\sharp}: P \rightarrow R \otimes_{\Frob, R}P/\iota(Q)$ is
zero. Therefore $V^{\sharp}$ 
induces a homomorphism $P/I_nP \rightarrow R \otimes_{\Frob, R}
\iota(Q)/I_nP$. 
By restriction of we obtain the homomorphism
\begin{equation}\label{AbVis3e} 
\bar{V}^{\sharp}: \iota(Q)/I_nP \rightarrow R \otimes_{\Frob, R}
\iota(Q)/I_nP. 
\end{equation}

\begin{df}
Let $\mathcal{P}$ be a nilpotent truncated display of level $n$ over a 
ring $R$. If $pR = 0$ the nilpotence order of $\mathcal{P}$ is 
the smallest natural number $e \geq 0$ such that 
\begin{displaymath}
(\bar{V}^{e})^{\sharp} = 0,
\end{displaymath}
for the iterate of \eqref{AbVis3e}.
If $R$ is arbitrary the order of nilpotence of $\mathcal{P}$ is the order
of nilpotence of the base change $\mathcal{P}_{R/pR}$.

The same makes sense for displays.
\end{df}

\begin{lemma}
\label{Le-nilpotent}
Let $\mathcal{P}$ be a truncated display of level $n$ over $R$, which
is given by the block matrix 
\begin{displaymath} 
\left(
\begin{array}{cc}
A & B\\
C & D
\end{array}
\right)
\end{displaymath} 
We denote the inverse matrix by 
\begin{displaymath} 
\left(
\begin{array}{cc}
\breve{A} & \breve{B}\\
\breve{C} & \breve{D}
\end{array}
\right).
\end{displaymath} 
Let $\hat{D}_0$ be the image of $\mathbf{w}_0(\breve{D})$ in
$R/pR$. 

Then $\mathcal{P}$ has order of nilpotence $\le e$ iff
\begin{displaymath} 
\hat{D}_0^{(p^{e-1})}  \cdot \ldots \cdot \hat{D}_0^{(p)} \hat{D}_0 = 0,
\end{displaymath} 
where the upper index $(p^i)$ means that we take the $p^i$-power of
all entries.
\end{lemma}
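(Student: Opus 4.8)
The statement relates the abstract nilpotence condition on $\mathcal P$ (defined via iterates of $\bar V^\sharp$) to an explicit vanishing condition on the block $\breve D$ of the inverse of the structure matrix. The plan is to compute $V^\sharp$ — or rather its reduction modulo $I_n$ and $p$ — explicitly in the matrix coordinates attached to a normal decomposition, and then to identify the $\Frob$-semilinear operator $\bar V^\sharp$ on $\iota(Q)/I_nP$ with the operator on $R^c$ (where $c=\operatorname{rk}L$) given by $\underline\xi\mapsto \hat D_0\cdot{}^{\Frob}\underline\xi$, or its transpose up to the $\Frob$-twist bookkeeping. Once that identification is in place, the iterate $(\bar V^e)^\sharp$ is multiplication by the Frobenius-twisted product $\hat D_0^{(p^{e-1})}\cdots\hat D_0^{(p)}\hat D_0$, and the equivalence is immediate.

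**Key steps, in order.** First I would reduce to $pR=0$: both sides only depend on $\mathcal P_{R/pR}$ by the very definitions of order of nilpotence and of $\hat D_0$, so assume $pR=0$, hence $W_n(R)=\mathcal W_n(R)$ and $I_n=\kappa(I_{n+1})$. Second, fix a normal decomposition $P=T\oplus L$, $Q=I_{n+1}\otimes T\oplus L$ with the structure matrix $\left(\begin{smallmatrix}A&B\\C&D\end{smallmatrix}\right)$ as in \eqref{AbStr-e}; then $F$ and $\dot F$ are given by \eqref{AbSt1e}, \eqref{AbSt2e}. Third, compute $V^\sharp$ from its defining property $V^\sharp(\dot F(y))=1\otimes\iota(y)$ for $y\in Q$: writing $y=({}^V\underline\eta,\underline\xi)$ one has $\iota(y)=(\kappa({}^V\underline\eta),\underline\xi)$ and $\dot F(y)=\left(\begin{smallmatrix}A&B\\C&D\end{smallmatrix}\right)(\underline\eta,{}^F\underline\xi)$, so $V^\sharp$ is the $W_n(R)$-linear map whose matrix, after the identification $F^\sharp\oplus\dot F^\sharp$ of \eqref{Ab4e}, is $p\cdot(\text{something})$ on the $T$-part and is given by the inverse matrix on the $L$-part; concretely, tensoring with $R=W_n(R)/I_n$ kills the $T$-contribution (it carries a factor $p$, and $pR=0$, using also that $\kappa({}^V\underline\eta)\in I_nP$), so modulo $I_n$ only the $L$-block survives. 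Fourth, identify $\iota(Q)/I_nP$: since $\iota$ is multiplication on the $T$-summand and identity on $L$, we get $\iota(Q)/I_nP\cong (I_nT\oplus L)/I_nP=L/I_nL\cong R^c$ canonically from the chosen basis of $L$. Fifth, read off that $\bar V^\sharp:R^c\to R\otimes_{\Frob,R}R^c$ is $\underline\xi\mapsto \mathbf w_0(\breve D)\cdot\underline\xi$ up to the semilinear twist, i.e. its $e$-fold iterate is multiplication by $\hat D_0^{(p^{e-1})}\cdots\hat D_0$; conclude. The only slightly delicate bookkeeping is to check that the $\breve D$-block — rather than $\breve D$ combined with $\breve C,\breve A,\breve B$ — is exactly what governs the induced map on the subquotient $\iota(Q)/I_nP$, and to get the Frobenius twists and the transpose/no-transpose convention right.

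**Main obstacle.** The genuine content is the bookkeeping in Step 3–Step 5: showing that passing to the subquotient $\iota(Q)/I_nP$ of $P/I_nP$ precisely has the effect of replacing the full inverse matrix $\left(\begin{smallmatrix}\breve A&\breve B\\\breve C&\breve D\end{smallmatrix}\right)$ by its lower-right block $\breve D$ (reduced by $\mathbf w_0$ and by $p$), and that the factor $p$ appearing in $F$ versus $\dot F$ is what makes the $T$-direction die modulo $I_n$ while the $L$-direction survives unmodified. This is the same mechanism as in the theory of nilpotent displays, so I would model the computation on the corresponding argument there; once the matrix of $\bar V^\sharp$ is pinned down as $\hat D_0$-multiplication (in the appropriate semilinear sense), iterating and taking the vanishing condition is a one-line verification.
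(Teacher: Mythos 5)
Your proposal is correct and follows exactly the computation the paper has in mind: the paper's own proof is just the one-line reference ``similar to the case of nilpotent displays in \cite[(15)]{Zink-DFG}'', and your steps (reduce to $pR=0$, compute $V^\sharp=\bigl(\begin{smallmatrix}{}^V1\,\breve A&{}^V1\,\breve B\\ \breve C&\breve D\end{smallmatrix}\bigr)$ from $V^\sharp\circ\dot F^\sharp=\iota$, pass to the subquotient $\iota(Q)/I_nP\cong L/I_nL$ where only the $\breve D$-block survives) are precisely the adaptation of that argument to the truncated setting. The only cosmetic point is that the $T$-contribution dies modulo $I_n$ because the factor ${}^V1$ lies in $I_n$, rather than because of $pR=0$ per se.
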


\begin{proof}
This is similar to the case of nilpotent displays in \cite[(15)]{Zink-DFG}.
\end{proof}

We note that the order of nilpotence does not change under truncation
(\ref{Abtrunc1e}). Also it does not change by base change 
with a ring homomorphism
$R\to S$ for which $R/pR\to S/pS$ is injective.

In the case of $p$-divisible groups, the nilpotence order can be expressed
as follows. Recall that a $p$-divisible group $G$ over a ring $R$ with $pR=0$ 
is a formal group iff the Frobenius $F$ is nilpotent on $G(1)$.

\begin{df}
\label{Def:nilpotence-pdiv}
Let $G$ be a formal $p$-divisible group over a ring $R$ in which $p$ is nilpotent.
If $pR=0$ the nilpotence order of $G$ is the smallest natural number $e\ge 0$ such
that $F^{e+1}$ is zero on $G(1)$. If $R$ is arbitrary the order of nilpotence of $G$
is the order of nilpotence of $G_{R/pR}$.

The same applies to infinitesimal truncated $p$-divisible groups.
\end{df}

\begin{lemma}
\label{Le:nilpotence}
Let $G$ be a formal $p$-divisible group over a ring $R$ with $pR=0$.
Then the nilpotence order of $G$ is equal to the nilpotence order of
the associated nilpotent display $\PPP$.
\end{lemma}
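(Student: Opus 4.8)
The plan is to reduce both notions of nilpotence order to a computation with an explicit matrix, using the dictionary between nilpotent displays and formal $p$-divisible groups together with the known description of the Dieudonn\'e module (or the covariant Dieudonn\'e crystal) of $G$. Since both sides are insensitive to truncation and we may pass to a normal form, I would first choose a normal decomposition and write $\PPP$ by a block matrix $\left(\begin{smallmatrix}A&B\\C&D\end{smallmatrix}\right)\in\GL_{d+c}(W(R))$; by Lemma \ref{Le-nilpotent} the nilpotence order of $\PPP$ is $\le e$ precisely when the product $\hat D_0^{(p^{e-1})}\cdots\hat D_0^{(p)}\hat D_0=0$, where $\hat D_0=\mathbf w_0(\breve D)\bmod p$ and $\breve D$ is the lower-right block of the inverse matrix.

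Next I would identify $\hat D_0$ with the matrix of the Frobenius $F_G$ acting on $\Lie(G^\vee)$, equivalently on $G(1)$ modulo the Verschiebung filtration. This is the heart of the argument. Concretely, the (contravariant) Dieudonn\'e module of $G$ over $R$ with $pR=0$ is, up to the usual identifications, $P$ with operators $F,V$; the submodule $I_nP$ (resp. $\iota(Q)$) corresponds to the image of $V$, and the Frobenius on the first truncation $G(1)$ is induced by $F^\sharp$ modulo the Verschiebung. Unwinding the definition of the structural matrix shows that $F^\sharp$ is given by $\left(\begin{smallmatrix}A&pB\\C&pD\end{smallmatrix}\right)$ on $T\oplus L$, so modulo $p$ and modulo the image of $V$ (which is spanned by the second block) the relevant operator on the cotangent side is governed by the inverse block $\breve D$ composed with $\mathbf w_0$, i.e.\ exactly $\hat D_0$. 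Thus $F_G^{e}$ vanishes on $\Lie(G^\vee)$ iff $\hat D_0^{(p^{e-1})}\cdots\hat D_0=0$, which matches the criterion above. Here one should be careful that the displayed product in Lemma \ref{Le-nilpotent} is the matrix of the $e$-fold iterate of a \emph{Frobenius-linear} map, hence the twisted factors $\hat D_0^{(p^i)}$, and similarly $F_G^{e}$ is a composition of Frobenius-linear maps, so the same twisted product appears; this bookkeeping is the main thing to get right.

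Finally I would assemble the two identifications. By Lemma \ref{Le-nilpotent} (for $\PPP$) and the description just given (for $G$), both ``$\PPP$ has nilpotence order $\le e$'' and ``$F_G^{e+1}=0$ on $G(1)$'', equivalently ``$F_G^e=0$ on $\Lie(G^\vee)$'' in the sense of Definition \ref{Def:nilpotence-pdiv}, translate into the single equation $\hat D_0^{(p^{e-1})}\cdots\hat D_0^{(p)}\hat D_0=0$. Taking the smallest such $e$ on each side gives equality of the two nilpotence orders. The expected main obstacle is the precise matching of $\hat D_0$ with the Frobenius on $\Lie(G^\vee)$, i.e.\ checking that the contravariant Dieudonn\'e theory used to define $\mathbb D_G$ (via \cite{Messing}) and the display formalism here assign the \emph{same} matrix to Frobenius on the first truncation; once this compatibility is in place (it is essentially the content of \cite{Zink-DFG} and \cite{Lau-Display} on which the equivalence rests), the rest is a formal comparison of two $e$-fold twisted matrix products.
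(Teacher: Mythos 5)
Your overall strategy --- reduce both sides to the matrix criterion of Lemma \ref{Le-nilpotent} --- could be made to work, but as written it has a genuine gap at the point where you pass from the defining condition on $G$ to a condition on $\Lie(G^\vee)$. Definition \ref{Def:nilpotence-pdiv} says that $G$ has nilpotence order $\le e$ when $F^{e+1}$ is zero \emph{as a homomorphism of group schemes} on $G(1)$; you treat this as ``equivalently, $F_G^e$ induces zero on $\Lie(G^\vee)$'' and then compare matrices on $\Lie(G^\vee)\cong Q/I_RP$. That equivalence is not definitional --- it is in fact the whole content of the lemma. A homomorphism of finite group schemes in characteristic $p$ can induce zero on Lie algebras without being zero (Frobenius itself is the standard example), so ``$V^e$ induces zero on $\Lie(G^\vee)$'' does not a priori imply the vanishing of the corresponding homomorphism of group schemes, and your chain of ``iff''s breaks exactly there.

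The paper closes this gap by a short group-scheme argument that you would need to supply: set $\bar G=G(1)/G[F]$, so that $F^{e+1}=0$ on $G(1)$ iff $F^e=0$ on $\bar G$ iff $V^e=0$ on $\bar G^\vee\cong G^\vee[F]$; since $\bar G^\vee$ is annihilated by $F$, the equivalence between such group schemes and $p$-Lie algebras shows that $V^e$ vanishes there iff it induces zero on $\Lie(\bar G^\vee)=\Lie(G^\vee)$, i.e.\ iff $(\bar V^e)^\sharp=0$, which is precisely the definition of nilpotence order $\le e$ for $\PPP$. Once this is in place, the detour through Lemma \ref{Le-nilpotent} and the explicit block $\hat D_0$ becomes unnecessary: the only input needed from the display construction is the isomorphism $\Lie(G^\vee)\cong Q/I_RP$ carrying the Verschiebung of $G^\vee$ to $\bar V^\sharp$ (recorded in Lemma \ref{Le:nilpotence-Phi}), and the comparison of nilpotence orders is then immediate without writing down any matrix. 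If you insist on the matrix route you must both (a) add the $\bar G$/$p$-Lie-algebra step above and (b) actually carry out the identification of $\hat D_0$ with the matrix of $\bar V^\sharp$ in a normal decomposition, which you currently only announce as ``the main thing to get right''.
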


\begin{proof}
Let $G^\vee$ be the dual of $G$.
Then $\Lie(G^\vee)$ is isomorphic to $Q/I_RP$ such that the Verschiebung 
$V$ of $G^\vee$ corresponds to $\bar V^\sharp:Q/I_RP\to (Q/I_RP)^{(p)}$.
This is the only relation between $G$ and $\PPP$ we need.
We define $\bar G$ by the exact sequence 
$0\to G[F]\to G(1)\to\bar G\to 0$ of finite locally free group schemes. 
Then $\bar G^\vee\cong G^\vee[F]$,
in particular $\Lie(G^\vee)=\Lie(\bar G^\vee)$. 

Now $F^{e+1}$ is zero on $G(1)$ iff $F^e$ is zero on $\bar G$ iff 
$V^e:(\bar G^\vee)^{(p^e)}\to\bar G^\vee$ is zero. By the equivalence
between affine group schemes of finite presentation annihilated by $F$
and $p$-Lie algebras, this homomorphism $V^e$ is zero iff it induces zero on the Lie algebra,
which holds iff $(\bar V^e)^\sharp$ is zero.
\end{proof}

\section{Relative truncated displays}

We consider now a surjective ring homomorphism $S \rightarrow R$, such
that $p$ is 
nilpotent in $S$ and the kernel $\mathfrak{a}$ is endowed with divided
powers. We will say that $S/R$ is a $pd$-thickening. We set 
\begin{displaymath} 
\mathcal{J}_{n+1}  = W_{n+1} (\mathfrak{a}) + I_{n+1}(S) \subset W_{n+1}(S).
\end{displaymath} 
Let $\kappa: \mathcal{J}_{n+1} \rightarrow \mathcal{W}_n(S)$ be the
 homomorphism induced by \eqref{Ab1e}. 

The divided powers define an isomorphism of $W_{n+1} (S)$-modules
\begin{displaymath} 
W_{n+1} (\mathfrak{a}) = \prod_{i=0}^{n} \mathfrak{a}_{[\mathbf{w}_i]}
\end{displaymath} 
which is given by the divided Witt polynomials $W_{n+1}(\mathfrak{a}) 
\rightarrow \mathfrak{a}$. 
The first factor on the right hand side will be also written as
$\tilde{\mathfrak{a}} \subset W_{n+1} (\mathfrak{a})$. Since
$\tilde{\mathfrak{a}}$ is an $S$-module it is a fortiori a
  $\mathcal{W}_n(S)$-module and therefore $\mathcal{J}_{n+1} =
  \tilde{\mathfrak{a}} \oplus I_{n+1} (S)$ is a  $\mathcal{W}_n
  (S)$-module too.  

The map
$V^{-1}: I_{n+1}(S) \rightarrow W_{n}(S)$ extends uniquely to:
\begin{equation}\label{Ab4e1} 
\dot{\sigma}: \mathcal{J}_{n+1} \rightarrow W_n(S), \quad
\text{where}\; \dot{\sigma}(\tilde{\mathfrak{a}}) = 0.
\end{equation} 
We will write $\sigma$ for the Frobenius map $F: \mathcal{W}_n(S)
\rightarrow W_n(S)$. 
We can define relative truncated displays of level $n$ with respect to $S
\rightarrow R$ as before:

\begin{df}\label{Ab2d}
A relative truncated display $\mathcal{P}$ of level $n$ for $S\to R$ 
consists of $(P,Q,\iota, \epsilon, F, \dot{F})$.
Here $P$ and $Q$ are $\mathcal{W}_n(S)$-modules,
\begin{displaymath}
   \iota: Q \rightarrow P, \quad \epsilon:
   \mathcal{J}_{n+1}\otimes_{\mathcal{W}_n(S)} P \rightarrow Q
\end{displaymath}  
are $\mathcal{W}_n(S)$-linear maps,  and
\begin{displaymath}
\begin{array}{rccc}  
F: & P & \rightarrow  & W_n(S) \otimes_{\mathcal{W}_n(S)} P\\
\dot{F}: & Q & \rightarrow & W_n(S) \otimes_{\mathcal{W}_n(S)} P
\end{array} 
\end{displaymath} 
are $\sigma$-linear maps. As in Definition \ref{Def-tr-disp}
we define a map
\begin{displaymath}
\begin{array}{lccc}
\tilde{F}: & \mathcal{J}_{n+1} \otimes_{\mathcal{W}_n(S)} P 
& \rightarrow & W_n(S) \otimes_{\mathcal{W}_n(S)} P \\
 & \tau \otimes x & \mapsto & \dot{\sigma}(\tau) \otimes Fx.
\end{array}
\end{displaymath}
We require that the following properties hold:
\begin{enumerate}
\item[(i)] The $\mathcal{W}_n(S)$-module $P$ is projective and finitely 
generated. 

\item[(ii)] The compositions $\iota\circ\epsilon$ and $\epsilon\circ(\id\otimes\iota)$
are the multiplication maps.

\item[(iii)] The cokernels of $\iota$ and $\epsilon$ are finitely generated projective
$R$-modules.

\item[(iv)] The diagram similar to \eqref{AbDef1e} is commutative,
i.e.\ we have $\dot F\circ\epsilon=\tilde F$.

\item[(v)] The image $\dot{F}(Q) $ generates $W_n(S)
  \otimes_{\mathcal{W}_n (S)} P$  as a  $W_n(S)$-module. 

\item[(vi)] The following sequence is exact:
\begin{equation}\label{AbHodge2e} 
0 \rightarrow Q/\Image \epsilon \overset{\iota}\longrightarrow
P/\kappa(\mathcal{J}_{n+1})P \rightarrow P/\iota(Q) \rightarrow 0.
\end{equation}
\end{enumerate}
Relative truncated displays of level $n$ for $S\to R$ form an additive category
in an obvious way, which we denote by $\mathcal D_n(S/R)$.
\medskip

The surjective $R$-linear map 
$P/\kappa(\mathcal{J}_{n+1})P \rightarrow P/\iota(Q)$
is called the Hodge filtration of the relative truncated display $\PPP$.
\end{df}

As before one can show that there is a normal decomposition 
\begin{displaymath} 
P = T \oplus L, \qquad  Q = \mathcal{J}_{n+1}
\otimes_{\mathcal{W}_n(S)} T \oplus L,
\end{displaymath} 
where $T$ and $L$ are $\mathcal{W}_{n}(S)$-modules. The map 
\begin{displaymath}
F^{\sharp} \oplus \dot{F}^{\sharp} : (W_n(S) \otimes_{F, \mathcal{W}_n
  (S)} T) \oplus (W_n(S) \otimes_{F, \mathcal{W}_n (S)} L) \rightarrow
W_n(S) \otimes_{\mathcal{W}_n (S)} P
\end{displaymath}
is an isomorphism of $W_{n}(S)$-modules which we write in matrix 
form as before; see \eqref{AbStr-e}:
\begin{displaymath}
   \left(
\begin{array}{cc}
A & B\\
C & D
\end{array}
\right)
\end{displaymath} 
This leads to the notion of a standard relative truncated display of level 
$n$ with respect to $S/R$,  
$\mathcal{S}(T,L;A,B,C,D)$. The data are the same as for standard truncated  
displays over the ring $S$, but the notion 
of a morphism changes. A morphism of standard relative truncated displays of
level $n$
\begin{displaymath}
\mathcal{S}(T,L;A,B,C,D) \rightarrow \mathcal{S}(T',L';A',B',C',D')
\end{displaymath}
is given by four homomorphisms of $\mathcal{W}_{n}(S)$-modules:
\begin{displaymath}
\begin{array}{ll}
X \in \Hom_{\mathcal{W}_n(S)} (T, T'), & U \in 
\Hom_{\mathcal{W}_n(R)} (L, \mathcal{J}_{n+1} \otimes_{\mathcal{W}_n(S)} T'),\\
Z \in \Hom_{\mathcal{W}_n(S)} (T, L'), & Y \in \Hom_{\mathcal{W}_n(S)} (L, L'),
\end{array}
\end{displaymath}
which satisfy the following relation:
\begin{displaymath}
\left( 
\begin{array}{cc}
A' & B' \\
C' & D' 
\end{array}
\right) 
\left( 
\begin{array}{cc}
\sigma(X) & \dot{\sigma}(U)\\
p\sigma(Z) & \sigma(Y)
\end{array}
\right) =
\left( 
\begin{array}{cc}
\bar{X} & \bar{U}\\
\bar{Z} & \bar{Y}  
\end{array}
\right)
\left( 
\begin{array}{cc}
A & B \\
C & D 
\end{array}
\right).
\end{displaymath}
Here in the relative case $\dot{\sigma}$ is induced by the morphism 
\eqref{Ab4e1} as in \eqref{Eq-sigmadotU}, and $\bar{U}$ is induced by $\kappa: \mathcal{J}_{n+1} 
\rightarrow \mathcal{W}_{n}(S)$ as follows:
\begin{displaymath}
W_n(S) \otimes_{\mathcal{W}_{n}(S)} L  \overset{\id \otimes U}{\longrightarrow}
W_n(S) \otimes_{\mathcal{W}_{n}(S)} \mathcal{J}_{n+1} \otimes_{\mathcal{W}_{n}(S)} T' 
\rightarrow W_n(S) \otimes_{\mathcal{W}_{n}(S)} T'.
\end{displaymath} 
As in the case of truncated displays (Proposition \ref{Pr-std-rep}) we see that
the category of standard relative truncated displays is equivalent to the 
category of relative truncated displays. Using this, again we define truncation functors: 
\begin{displaymath}
\mathcal{D}_{n+1}(S/R) \rightarrow \mathcal{D}_{n}(S/R).
\end{displaymath}
We also have obvious reduction functors which are compatible with truncation
\begin{displaymath}
\mathcal{D}_n(S) \rightarrow \mathcal{D}_n(S/R) \rightarrow \mathcal{D}_n(R).
\end{displaymath}
For a morphism of $pd$-thickenings
\begin{displaymath}
\begin{CD}
S @>>> S'\\
@VVV  @VVV\\
R @>>> R'
\end{CD}
\end{displaymath}
we have a base change functor $\mathcal{D}_n(S/R) \rightarrow \mathcal{D}_n(S'/R')$.

\subsection{Matrix description}

Assume that $T$ and $L$ are free $\mathcal W_n(S)$-modules.
If we fix isomorphisms $T \cong \mathcal{W}_n(S)^d$ and $L \cong
\mathcal{W}_n(S)^c$, the relative truncated display is given by a
matrix in $GL_{d+c}(W_n(S))$ as before:
\begin{equation}\label{Ab5e} 
 \dot{F} (
\left(
\begin{array} {r}
\underline{\tau}\\
\underline{\ell}\\
\end{array} 
\right)
) = 
\left(
\begin{array}{cc}
A & B\\
C & D
\end{array}
\right) 
\left(
\begin{array} {r}
\dot{\sigma} (\underline{\tau})\\
\sigma(\underline{\ell})\\
\end{array} 
\right),
\quad \underline{\tau} \in \mathcal{J}_{n+1}^d, \; \underline{\ell}
\in \mathcal{W}_n(S)^c.
\end{equation}   
Let $\mathcal{P}' = (P',Q',\iota', \epsilon', F', \dot{F}')$ be a
second relative truncated display. Consider a 
normal decomposition $P' = T' \oplus L'$ with $T' \cong \mathcal{W}_n
(S)^{d'}$ and $L' \cong \mathcal{W}_n (S)^{c'}$. Let  
\begin{displaymath} 
\left(
\begin{array}{cc}
A' & B'\\
C' & D'
\end{array}
\right) \in GL_{d'+c'}(W_n(S))
 \end{displaymath} 
be the matrix of $\mathcal{P'}$. A homomorphism $\alpha: \mathcal{P}
\rightarrow \mathcal{P'}$ is a matrix
\begin{equation}\label{Ab5e1} 
\left(
\begin{array}{cc}
X & J\\
Z & Y
\end{array}
\right),
\end{equation} 
 where $X \in \Hom_{\mathcal{W}_n (S)} (T,T')$, $Y \in
 \Hom_{\mathcal{W}_n (S)}(L,L')$, $Z \in \Hom_{\mathcal{W}_n (S)} (T,
 L')$, 
 $ J \in \Hom_{\mathcal{W}_n (S)} (L, \mathcal{J}_{n+1} \otimes T')$, i.e.\ the matrices
 $X$, $Y$, $Z$ have coefficients in $\mathcal{W}_n(S)$ and $J$ has
 coefficients in $\mathcal{J}_{n+1}$, which satisfies the relation
\begin{equation}\label{Ab6e} 
\left(
\begin{array}{cc}
A' & B'\\
C' & D'
\end{array}
\right) 
\left(
\begin{array}{cc}
\sigma(X) & \dot{\sigma} (J)\\
p\sigma(Z)  & \sigma(Y)
\end{array}
\right)
= 
\left(
\begin{array}{cc}
\bar{X} & \bar{J}\\
\bar{Z} & \bar{Y}
\end{array}
\right)
\left(
\begin{array}{cc}
A & B\\
C & D
\end{array}
\right)
\end{equation}  
Here the bar denotes the image under the restriction map
$\mathcal{W}_n (S) \rightarrow W_n(S)$ resp. $\mathcal{J}_{n+1}
\rightarrow \mathcal{W}_n (S) \rightarrow W_n(S)$. 

A morphism $\alpha$ given by a matrix \eqref{Ab5e1} induces a
homomorphism $P \rightarrow P'$ which is given by the matrix 
\begin{displaymath} 
\left(
\begin{array}{cc}
X & \kappa( J)\\
Z & Y
\end{array}
\right).
\end{displaymath} 
This matrix already determines the matrix \eqref{Ab5e1} uniquely. Indeed,
from $\kappa(J)$ one obtains $\bar{J}$. Then the equation \eqref{Ab6e} 
determines $\dot{\sigma}(J)$. Since the intersection of the kernels of
the two maps
$\mathcal{J}_{n+1} \rightarrow W_n(S)$ given by $\dot{\sigma}$ and
$J \mapsto \bar{J}$ is zero, this determines $J$. 
   
As for truncated displays we conclude:

\begin{lemma}
\label{Le-faithful-rel}
For relative truncated displays $\PPP$ and $\PPP'$ of level $n$ for
$S\to R$ the forgetful homomorphism
\begin{equation}\label{Ab6e1} 
\Hom_{\mathcal{D}_n(S/R)} (\mathcal{P}, \mathcal{P}') \to
\Hom_{\mathcal{W}_n(S)} (P, P')
\end{equation} 
is injective.
\end{lemma}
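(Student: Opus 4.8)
The plan is to reduce to the matrix description, exactly as was done for truncated displays in the proof of Lemma~\ref{Le-faithful}. First I would invoke Proposition~\ref{Pr-norm-dec} (in its relative incarnation, stated after Definition~\ref{Ab2d}) to choose normal decompositions $P=T\oplus L$ and $P'=T'\oplus L'$, and then reduce to the affine (Zariski-local) situation in which $T,L,T',L'$ are free $\mathcal W_n(S)$-modules; this is legitimate because injectivity of a map of sheaves can be checked locally, and the relevant localization statement is Lemma~\ref{Le-W-Zar} applied to $S$. After this reduction a morphism $\alpha\colon\PPP\to\PPP'$ is encoded by a matrix $\left(\begin{smallmatrix}X&J\\Z&Y\end{smallmatrix}\right)$ as in \eqref{Ab5e1}, subject to the relation \eqref{Ab6e}, and the induced map $P\to P'$ is the matrix $\left(\begin{smallmatrix}X&\kappa(J)\\Z&Y\end{smallmatrix}\right)$.

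The key step is then precisely the uniqueness observation already recorded in the paragraph preceding the statement: from the induced map $P\to P'$ one reads off $X$, $Z$, $Y$ and $\kappa(J)$ directly, and one recovers $\bar J$ (the image of $J$ under $\mathcal J_{n+1}\to\mathcal W_n(S)\to W_n(S)$) from $\kappa(J)$. Feeding $X,Y,Z$ into the structure equation \eqref{Ab6e}, the top-right block of that matrix identity reads $A'\dot\sigma(J)+B'\sigma(Y)=\bar X B+\bar J D$, which — since everything else is already determined and $A',C'$ etc.\ assemble to an \emph{invertible} matrix — pins down $\dot\sigma(J)$ uniquely. Thus $J$ is determined by the pair $(\dot\sigma(J),\bar J)$, i.e.\ by its images under the two maps $\dot\sigma\colon\mathcal J_{n+1}\to W_n(S)$ and $J\mapsto\bar J$. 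So injectivity of \eqref{Ab6e1} follows once we know these two maps have zero common kernel.

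Hence the one genuine point to check, and the step I expect to be the main (though still modest) obstacle, is that $\Ker(\dot\sigma)\cap\Ker(\;\cdot\;\mapsto\bar{\;\cdot\;})=0$ on $\mathcal J_{n+1}=\tilde{\mathfrak a}\oplus I_{n+1}(S)$. On the summand $I_{n+1}(S)$ the map $\dot\sigma=V^{-1}$ is already bijective onto $W_n(S)$, so nothing there lies in $\Ker(\dot\sigma)$; on the summand $\tilde{\mathfrak a}$, by definition \eqref{Ab4e1} we have $\dot\sigma(\tilde{\mathfrak a})=0$, so an element of the common kernel must have its $I_{n+1}(S)$-component zero and must lie in the kernel of $\tilde{\mathfrak a}\hookrightarrow W_{n+1}(\mathfrak a)\xrightarrow{(\Res,F)}W_n(S)\times W_n(S)$, in particular in the kernel of $\Res$; but the kernel of $\Res\colon\mathcal W_n(S)\to W_n(S)$ restricted to $\tilde{\mathfrak a}$ is $0$ because on $\tilde{\mathfrak a}=\mathfrak a_{[\mathbf w_0]}$ the Witt component $\mathbf w_0$ is an isomorphism and $\mathbf w_0$ factors through $\Res$. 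I would spell this last line out, noting that it is the exact relative analogue of the argument already given for $\Hom_{\mathcal D_n}$, and then conclude that \eqref{Ab6e1} is injective. (One could alternatively deduce this from the absolute case over $S$ together with the explicit description of $\mathcal D_n(S/R)$, but the direct matrix computation is shortest.)
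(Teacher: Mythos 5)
Your proof is correct and follows essentially the paper's own argument: the paper records exactly this uniqueness statement in the paragraph preceding the lemma (read off $X,Y,Z,\kappa(J)$ from the induced map $P\to P'$, recover $\bar J$, determine $\dot\sigma(J)$ from \eqref{Ab6e}, and use that $\dot\sigma$ and $J\mapsto\bar J$ have trivial common kernel on $\mathcal J_{n+1}$), and you additionally spell out why that common kernel vanishes. One small precision: to pin down $\dot\sigma(J)$ you must use the whole right-hand column of \eqref{Ab6e}, i.e.\ both blocks $A'\dot\sigma(J)+B'\sigma(Y)$ and $C'\dot\sigma(J)+D'\sigma(Y)$, since $A'$ alone need not be invertible --- but this is evidently what you intend when you invoke the invertibility of the full block matrix $\left(\begin{smallmatrix}A'&B'\\C'&D'\end{smallmatrix}\right)$.
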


Let us denote by $\mathcal{D}_n(S/R)$ the category of relative
truncated displays with respect to $S \rightarrow R$ and by
$\mathcal{M}_n(S/R)$ the corresponding category of matrices. 
The objects in  $\mathcal{M}_n(S/R)$ are just invertible block
matrices in $GL_h(W_n(S))$. The compositions of morphisms \eqref{Ab6e}
are defined by
\begin{displaymath} 
\left(
\begin{array}{cc}
X' &  J'\\
Z' & Y'
\end{array}
\right) \circ 
\left(
\begin{array}{cc}
X &  J\\
Z & Y
\end{array}
\right) :=
\left(
\begin{array}{cc}
X'X + \kappa(J')Z & X' J + J'Y\\
Z'X + Y'Z  & Z'\kappa(J) + Y'Y
\end{array}
\right).
\end{displaymath} 
Here the expression $X'J + J'Y$ makes sense because
$\mathcal{J}_{n+1}$ is a $\mathcal{W}_n(S)$-module.

\subsection{Lifting of truncated displays}

The following is the main result of this section.

\begin{prop}\label{Pr-lift}
Let $S \rightarrow R$ be a $pd$-thickening as above with kernel 
$\mathfrak{a}$. Let $m$ be a natural number such that  $p^m \mathfrak{a} = 0$.
Let $\bar{\mathcal{P}}_1$ and $\bar{\mathcal{P}}_2$ be  truncated displays of 
level $n$ over $R$. 
Let $\mathcal{P}_1$ resp. $\mathcal{P}_2$ be two relative truncated
displays of level $n$ for $S\to R$ which lift  $\bar{\mathcal{P}}_1$ resp.\ 
$\bar{\mathcal{P}}_2$. Then each morphism $\bar{\alpha}: \bar{\mathcal{P}}_1 
\rightarrow  \bar{\mathcal{P}}_2$ lifts to a morphism
\begin{equation}\label{Ab4e2} 
\alpha: \mathcal{P} \rightarrow \mathcal{P'}.
\end{equation} 

Assume moreover that $\bar{\mathcal{P}}_1$ and $\bar{\mathcal{P}}_2$ have 
order of nilpotence $\leq e$.
If $n > m(e+1) +1$, then the truncation 
\begin{displaymath} 
\alpha(n-m(e+1)-1): \mathcal{P}_1(n-m(e+1)-1) \rightarrow 
\mathcal{P}_2(n-m(e+1)-1)
\end{displaymath}
does not depend on the choice of $\alpha$ but only on 
$\bar{\alpha}$. 
\end{prop}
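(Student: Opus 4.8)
The plan is to work with the matrix description of relative truncated displays developed above. First I would reduce to the case where $\bar{\mathcal P}_1$ and $\bar{\mathcal P}_2$ are given by normal decompositions with free modules, hence by invertible block matrices $\mathfrak F=\left(\begin{smallmatrix}A&B\\C&D\end{smallmatrix}\right)$ and $\mathfrak F'$; lifts $\mathcal P_i$ to relative displays for $S\to R$ correspond to lifts of these matrices to $\GL_h(W_n(S))$. A morphism $\bar\alpha$ is a matrix $\bar{\mathfrak X}=\left(\begin{smallmatrix}\bar X&\bar J\\\bar Z&\bar Y\end{smallmatrix}\right)$ over $R$ satisfying \eqref{Ab6e} (with $\mathcal W_n(R)$, $I_{n+1}(R)$), and I want to lift it to $\mathfrak X$ over $S$ satisfying \eqref{Ab6e}. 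For the existence of a lift, I would first choose an arbitrary set-theoretic lift of the entries to $\mathcal W_n(S)$ and $\mathcal J_{n+1}$, measure the failure of \eqref{Ab6e} by an ``error matrix'' $E$ with coefficients in the kernel $\mathcal J_{n+1}$ (more precisely in $W_n(\mathfrak a)$ after the relevant reductions), and then correct the lift: rewriting \eqref{Ab6e} as a fixed-point equation $\mathfrak X = \Psi(\mathfrak X)$ where the correction operator $\Psi$ involves $\sigma$, $\dot\sigma$, multiplication by $p$ on the lower-left block, and conjugation by $\mathfrak F,\mathfrak F'$, one uses that $F$ and the divided-power structure make the relevant operator topologically nilpotent on the finite-length module of error terms. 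Concretely, because $p^m\mathfrak a=0$ and because $\dot\sigma$ kills $\tilde{\mathfrak a}$ while $\sigma=F$ raises the level of $W(\mathfrak a)$-entries, after finitely many iterations the correction stabilizes, giving an exact lift $\alpha$. This settles \eqref{Ab4e2}.

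For the uniqueness of the truncation, let $\alpha,\alpha'$ be two lifts of $\bar\alpha$ and set $\delta=\alpha-\alpha'$. Then $\delta$ is a homomorphism $\mathcal P_1\to\mathcal P_2$ whose reduction to $R$ is zero; equivalently, in matrix terms $\delta$ is given by a matrix $\left(\begin{smallmatrix}X_0&J_0\\Z_0&Y_0\end{smallmatrix}\right)$ whose entries lie in the kernel of reduction, i.e.\ $X_0,Z_0,Y_0$ have coefficients in $\ker(\mathcal W_n(S)\to\mathcal W_n(R))$ and $J_0$ in $\ker(\mathcal J_{n+1}(S)\to I_{n+1}(R))$, which by the structure of $\mathcal J_{n+1}=\tilde{\mathfrak a}\oplus I_{n+1}(S)$ means $J_0$ lands in $\tilde{\mathfrak a}\oplus{}^V W_n(\mathfrak a)\subset W_{n+1}(\mathfrak a)$; all these error modules are killed by $p^m$ in the appropriate sense. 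The key point is that $\delta$ satisfies the \emph{homogeneous} version of \eqref{Ab6e}, namely
\begin{equation*}
\mathfrak F'\left(\begin{array}{cc}\sigma(X_0)&\dot\sigma(J_0)\\p\,\sigma(Z_0)&\sigma(Y_0)\end{array}\right)=\left(\begin{array}{cc}\bar X_0&\bar J_0\\\bar Z_0&\bar Y_0\end{array}\right)\mathfrak F,
\end{equation*}
so conjugating, $\delta$ is a fixed point of the linear operator $\Psi_0$ that raises $W(\mathfrak a)$-level via $F$ on the upper row, via $pF$ on the lower-left entry, and kills the $\tilde{\mathfrak a}$-part of $J_0$ via $\dot\sigma$. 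The bookkeeping is: each application of $\Psi_0$ either multiplies the relevant error by $p$ (so $m$ steps kill it, since $p^m\mathfrak a=0$) or shifts a $W(\mathfrak a)$-entry one step up the Verschiebung filtration (so $n$ steps would kill it outright), and the nilpotence hypothesis of order $\le e$ on $\bar{\mathcal P}_1,\bar{\mathcal P}_2$ is exactly what controls the ``$D$-block'' contribution via Lemma \ref{Le-nilpotent} ($\hat D_0^{(p^{e-1})}\cdots\hat D_0=0$), coupling the level-raising and the $p$-multiplication so that the combined decay rate is $m(e+1)$. Tracking this carefully shows that after $m(e+1)$ applications of $\Psi_0$ the image of $\delta$ has entries in the submodule annihilated after one further truncation step, i.e.\ $\delta$ becomes zero on $\mathcal P_1(n-m(e+1)-1)$; equivalently $\alpha(n-m(e+1)-1)=\alpha'(n-m(e+1)-1)$.

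I would organise the estimate as a single lemma: for a homomorphism $\delta$ between relative truncated displays with $\delta_R=0$, the induced map on the truncation $\mathcal P_1(n-k)$ is zero provided $k\ge m(e+1)+1$, proved by the iteration argument above applied to the matrix description, using the identity $F(\iota y)=p\dot F(y)$ and the formula for $V^\sharp$ to feed the nilpotence of $\bar V^\sharp$ (order $\le e$) into the recursion. The combinatorial heart is verifying that one round of the operator $\Psi_0$ costs exactly one factor of $p$ together with an application that, iterated $e{+}1$ times through the block structure, uses up the vanishing $\hat D_0^{(p^{e-1})}\cdots\hat D_0=0$; hence $m(e+1)$ rounds suffice and the extra ``$+1$'' accounts for the difference between ``divisible by $I_{n+1}$'' and ``zero after truncation to level $n-m(e+1)-1$''. \textbf{The main obstacle} I expect is precisely this bookkeeping: making the interplay between the $p$-adic filtration (controlled by $p^m\mathfrak a=0$) and the Verschiebung/Frobenius filtration on $W_n(\mathfrak a)$ fully explicit in the presence of the off-diagonal blocks $B,C$ and the twisting by $\mathfrak F,\mathfrak F'$, so that the nilpotence order $e$ enters with the correct multiplicity $(e+1)$ rather than something weaker. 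Once the operator $\Psi_0$ is written down, its topological nilpotence is routine; getting the \emph{sharp} constant $m(e+1)+1$ is the delicate part, and this is where Lemma \ref{Le-nilpotent} is used in an essential way rather than just the qualitative nilpotence of $\mathcal P$.
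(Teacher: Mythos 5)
Your overall strategy is the paper's: pass to the matrix description, reduce to corrections with entries in $W_{n+1}(\Fa)$ and in the kernel of $\mathcal W_n(S)\to\mathcal W_n(R)$, solve the inhomogeneous equation \eqref{Ab6e} for existence, and control the homogeneous equation satisfied by the difference of two lifts using Lemma \ref{Le-nilpotent}, the divided powers, and $p^m\Fa=0$. But the proposal stops exactly where the proof actually lives. The entire content of the uniqueness bound is the claim that the product of $e+1$ consecutive transport matrices $\breve\Delta_{i+e}\cdots\breve\Delta_i$ (defined as in \eqref{Eq-Delta} from the ghost components $w_i$ of the inverse structure matrix) has coefficients in $pS$, so that $m(e+1)$ steps of the recursion \eqref{Eq-h} produce a factor $p^m$ killing $\Fa$. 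You name the ingredients but do not assemble them, and you yourself flag this as ``the main obstacle.'' The assembly is not routine bookkeeping: one must (a) rewrite \eqref{Ab6e} in logarithmic/ghost coordinates so that the recursion \eqref{ABlog1e} couples level $i$ to level $i+1$ through $w_i(A),\dots,w_i(\breve D)$; (b) use $w_i(\xi)\equiv\xi_0^{p^i}\pmod p$ to see that the lower-right block of the product is governed by $\breve D_0^{(p^{i+e})}\cdots\breve D_0^{(p^{i+1})}$; and (c) use that the nilpotence hypothesis only gives $\breve D_0^{(p^{e-1})}\cdots\breve D_0\equiv 0$ modulo $\Fa(S/pS)$, so that the divided-power identity $a^p\in pS$ for $a\in\Fa$ is needed to raise to the $p$-th power and obtain an honest vanishing $\breve D_0^{(p^{e})}\cdots\breve D_0^{(p)}=0$ in $S/pS$. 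Step (c) in particular is invisible in your write-up, and without it the $e+1$ factors only land in $pS+\Fa$, which is not enough to iterate.

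A second, conceptual problem: your description of the uniqueness mechanism (``each application of $\Psi_0$ either multiplies the error by $p$ \dots or shifts a $W(\Fa)$-entry one step up the Verschiebung filtration, so $n$ steps would kill it outright'') cannot be right as stated. If iterating the operator killed the difference outright after $n$ steps, the lift $\alpha$ would be unique, whereas in fact the solutions of the homogeneous system are parametrized by a free top component $H(n)$ and are generically nonzero. The correct picture is not a topologically nilpotent fixed-point operator but a triangular recursion $h(i)=\Delta_i'\,h(i+1)\,\breve\Delta_i$ propagating the free parameter downward; what vanishes is only $h(i)$ for $i\le n-m(e+1)-1$, because the transport from level $n$ down to level $i$ accumulates $p^m$. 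This also affects your existence argument: the clean route is simply to observe that the system \eqref{ABlog1e}--\eqref{ABlog2e} determines $H(i)$ from $H(i+1)$, so any choice of $H(n)$ yields a lift; no contraction argument (and no finite-length hypothesis on $\Fa$, which you invoke but which is not assumed here) is needed.
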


\begin{proof}
As in \cite{Zink-DFG} we may replace $\bar{\alpha}$ by the automorphism
$\left(\begin{smallmatrix}1&0\\\bar\alpha&1 \end{smallmatrix}\right)$
of $\bar{\mathcal{P}}_1 \oplus \bar{\mathcal{P}}_2$. 
Note that if $\bar{\mathcal P}_1$ and $\bar{\mathcal P}_2$ 
are nilpotent of order $\le e$, then
the same holds for $\bar{\mathcal{P}}_1 \oplus \bar{\mathcal{P}}_2$.
Thus is suffices to
prove the following assertion.

\emph{Let $\bar{\mathcal{P}}$ be a display of level $n$
over $R$ and let $\mathcal{P}$ and $\mathcal{P}'$ be two relative displays of
level $n$ for $S\to R$ which lift $\bar{\mathcal{P}}$. Then there is an isomorphism 
$\alpha: \mathcal{P} \rightarrow \mathcal{P}'$ which lifts the identity.
If $\bar{\mathcal{P}}$ is nilpotent of order $\le e$ then the truncation
$\alpha[n-m(e+1)-1]$ is uniquely determined.}
 
We choose a normal decomposition $\bar{P} = \bar{T} \oplus \bar{L}$.
For simplicity we
will assume that these modules are free with a given basis. Let $T$
and $L$ the free $\mathcal{W}_n(S)$-modules with basis which lift
$\bar{T}$ and 
$\bar{L}$. Then we have normal decompositions:
\begin{displaymath} 
P \cong T \oplus L, \quad P' \cong T \oplus L,
\end{displaymath} 
which lift the chosen normal decompositions of $\bar{\mathcal{P}}$. 
We are looking for homomorphisms of the form:
\begin{equation}\label{Ab7e}
\left(
\begin{array}{cc}
E_d & 0\\
0 & E_c
\end{array}
\right)
+
\left(
\begin{array}{cc}
X & J\\
Z & Y
\end{array}
\right): \mathcal{P} \longrightarrow \mathcal{P}'.
\end{equation}
The matrix $J$ has coefficients in $W_{n+1}(\mathfrak{a}) \subset
\mathcal{J}_{n+1}$ and the matrices $X, Y, Z$ have coefficients in the
kernel of $\mathcal{W}_n (S) \rightarrow \mathcal{W}_n (R)$. 

Let us describe this kernel. An element $\xi \in W_{n+1}(S)$ represents an
element of the kernel iff it takes the form $\xi = \eta +
~^{V^n}[s]$, where $\eta$ lies in $W_{n+1}(\mathfrak{a})$
and where $s \in S$ satisfies $ps \in \mathfrak{a}$. 
In this case the elements $\bar{\xi} = \Res \xi$ and $\sigma(\xi)$ lie in $W_n(\mathfrak{a})$, 
and the pairs $(\bar{\xi}, \sigma (\xi)) \in \mathcal{W}_n(S)$ for 
$\xi=\eta+~^{V^n}[s]$ as above are exactly
the elements in the kernel. 
We write the logarithmic coordinates of these elements with respect to
the divided powers on $\Fa$:
\begin{displaymath} 
\bar{\xi} = [a_0, \ldots, a_{n-1}], \qquad \sigma(\xi) = [x_1, \ldots,
x_n].
\end{displaymath} 
The logarithmic coordinates of $\sigma (~^{V^n}[s]) \in
W_n(\mathfrak{a})$ are $[0, \ldots, 0, ps]$. We see that $x_i = pa_i$
for $i \leq n-1$ and that $x_n \in pS \cap \mathfrak{a}$.
Thus the elements of the kernel correspond bijectively to vectors
\begin{displaymath} 
 <a_0, \ldots a_{n-1}, x_n>, \qquad a_i \in \mathfrak{a}, \quad x_n \in
 pS \cap \mathfrak{a}
\end{displaymath}
such that
\begin{displaymath} 
\begin{array} {l}
\sigma (<a_0, \ldots a_{n-1}, x_n>) = [pa_1, \ldots, pa_{n-1}, x_n]\\
\Res(<a_0, \ldots a_{n-1}, x_n>) = [a_0, \ldots a_{n-1}]. 
\end{array} 
\end{displaymath} 
With these notations we may write the matrices $X,Y,Z, J$:
\begin{displaymath} 
X = <X(0), \ldots, X(n)>
\end{displaymath} 
 where the $X(i)$ are matrices with
coefficients in $\mathfrak{a}$ and moreover $X(n)$ has coefficients in
$pS \cap \mathfrak{a}$ and similarly for $Y$ and $Z$. For the matrix
$J \in W_{n+1} (\mathfrak{a})$ we use the logarithmic coordinates
\begin{displaymath} 
J = [J(0), \ldots, J(n)],  \qquad \dot{\sigma}(J) = [J(1), \ldots,
J(n)]. 
\end{displaymath}      
The $J(i)$ are matrices with coefficients in $\mathfrak{a}$.

We assume that $\mathcal{P}$ and $\mathcal{P'}$ are given
by matrices as above \eqref{Ab5e}. We set
\begin{displaymath} 
 \left(
\begin{array}{cc}
\eta_A & \eta_B\\
\eta_C & \eta_D
\end{array}
\right) =
\left(
\begin{array}{cc}
A' & B'\\
C' & D'
\end{array}
\right)
-
\left(
\begin{array}{cc}
A & B\\
C & D
\end{array}
\right)
\end{displaymath} 

The condition that \eqref{Ab7e} is a homomorphism of relative displays
becomes:
\begin{displaymath} 
\left(
\begin{array}{cc}
\eta_A & \eta_B\\
\eta_C & \eta_D
\end{array}
\right)
+
\left(
\begin{array}{cc}
A' & B'\\
C' & D'
\end{array}
\right) 
\left(
\begin{array}{cc}
\sigma(X) & \dot{\sigma} (J)\\
p\sigma(Z)  & \sigma(Y)
\end{array}
\right)
= 
\left(
\begin{array}{cc}
\bar{X} & \bar{J}\\
\bar{Z} & \bar{Y}
\end{array}
\right)
\left(
\begin{array}{cc}
A & B\\
C & D
\end{array}
\right).
\end{displaymath} 
This is an equation in the $W_n(S)$-module $W_n(\mathfrak{a})$. We
rewrite it in logarithmic coordinates and obtain for $0 \leq i \leq
n-2$ the equations:
\begin{equation}\label{ABlog1e} 
\begin{array}{lcl} 
\left(
\begin{array}{cc}
\eta_A(i) & \eta_B(i)\\
\eta_C(i) & \eta_D(i)
\end{array}
\right)
& + &
\left(
\begin{array}{cc}
w_i(A') &w_i(B')\\
w_i(C' )& w_i(D')
\end{array}
\right)
\left(
\begin{array}{cc}
pX(i+1) & J(i+1)\\
p^2Z(i+1) & pY(i+1)
\end{array}
\right)
=\\[5mm]
& &
\left(
\begin{array}{cc}
X(i) & J(i)\\
Z(i) & Y(i)
\end{array}
\right)
\left(
\begin{array}{ll}
w_i(A) &w_i(B)\\
w_i(C )& w_i(D)
\end{array}
\right)
\end{array}
\end{equation} 
and for $i = n-1$ we obtain the equation
\begin{equation}\label{ABlog2e} 
\begin{array}{l} 
\left(
\begin{array}{cc}
\eta_A(n-1) & \eta_B(n-1)\\
\eta_C(n-1) & \eta_D(n-1)
\end{array}
\right)
 + 
\left(
\begin{array}{cc}
w_{n-1}(A') &w_{n-1}(B')\\
w_{n-1}(C' )& w_{n-1}(D')
\end{array}
\right)
\left(
\begin{array}{cc}
X(n) & J(n)\\
pZ(n) & Y(n)
\end{array}
\right) \\[5mm]
=
\left(
\begin{array}{cc}
X(n-1) & J(n-1)\\
Z(n-1) & Y(n-1)
\end{array}
\right)
\left(
\begin{array}{cc}
w_{n-1}(A) &w_{n-1}(B)\\
w_{n-1}(C )& w_{n-1}(D)
\end{array}
\right).
\end{array}
\end{equation} 
We see that for arbitrary given $X(n), Y(n), Z(n), J(n)$ there are unique solutions
of \eqref{ABlog2e} and of \eqref{ABlog1e} for $0\le i\le n$, which means that
for given $X(n), \ldots, J(n)$
there is a unique isomorphism \eqref{Ab7e} which lifts the identity. 

Assume now that $\bar\PPP$ is nilpotent of order $\le e$.
Let us write:
\[
H(i)=\left(\begin{matrix}X(i) & J(i) \\ Z(i) & Y(i) \end{matrix}\right).
\]
We claim that for each $k\ge 1$ and $0\le i\le n-k(e+1)-1$ the reduction
of $H(i)$ in $\Fa/p^k\Fa$ is independent of the choice of $H(n)$.
This proves the uniqueness assertion of the proposition.
Let
\[
\left(
\begin{array}{cc}
\breve{A} & \breve{B}\\
\breve{C} & \breve{D}
\end{array}
\right)
=
\left(
\begin{array}{cc}
A & B\\
C & D
\end{array}
\right)^{-1}.
\]
If we multiply \eqref{ABlog1e} by the image of this matrix under $w_i$,
we obtain for $0\le i\le n$ an equation
\[
H(i)=R(i)+
\left(\begin{matrix}w_i(A')&w_i(B')\\w_i(C')&w_i(D')\end{matrix}\right)
\left(\begin{matrix}1&0\\0&p\end{matrix}\right)
H(i+1)
\left(\begin{matrix}p&0\\0&1\end{matrix}\right)
\left(\begin{matrix}w_i(\breve A)&w_i(\breve B)\\w_i(\breve C)&w_i(\breve D)\end{matrix}\right)
\]
where $R(i)$ is a given matrix with coefficients in $\Fa$. Let
\begin{equation}
\label{Eq-Delta}
\Delta_i'=\left(\begin{matrix}w_i(A')&pw_i(B')\\
w_i(C')&pw_i(D')\end{matrix}\right),\qquad
\breve\Delta_i=\left(\begin{matrix}pw_i(\breve A)&pw_i(\breve B)\\
w_i(\breve C)&w_i(\breve D)\end{matrix}\right).
\end{equation}
Assume that $H(i)_{0\le i\le n}$ and $H'(i)_{0\le i\le n}$ are two solutions
of \eqref{ABlog1e} and \eqref{ABlog2e}.
For their difference $h=H-H'$ we obtain the equations
\begin{equation}
\label{Eq-h}
h(i)=\Delta_i'\cdot h(i+1)\cdot\breve\Delta_i
\end{equation}
for $0\le i\le n-2$.
If we can show that for $i\ge 0$ the product of $e+1$ factors
\[
\breve\Delta_{i+e}\cdot\ldots\cdot\breve\Delta_{i+1}\breve\Delta_i
\]
has coefficients in $pS$, it follows by induction that for $i\ge 0$ and $k\ge 1$ the
product of $k(e+1)$ factors
\[
\breve\Delta_{i+k(e+1)-1}\cdot\ldots\cdot\breve\Delta_{i+1}\breve\Delta_i
\]
has coefficients in $p^kS$. 
Then \eqref{Eq-h} implies that $h(i)=0$ for $i\le n-k(e+1)-1$,
which proves the claim.

Let $\breve{D}_0$ be the first component of the Witt vector matrix $\breve D$
modulo $p$. The assumption that $\bar{\mathcal P}$ is nilpotent of order
$\le e$ means that
\[
\breve D_0^{(p^{e-1})}  \cdot\ldots\cdot \breve{D}_0^{(p)} \breve{D}_0 \equiv 0
\]
modulo $\Fa(S/pS)$; see Lemma \ref{Le-nilpotent}.
But $a^p\in pS$ for $a \in \mathfrak{a}$ since $\mathfrak{a}$ has
divided powers. Thus we get
 \begin{displaymath} 
  \breve{D}_0^{(p^{e})}  \cdot \ldots \cdot \breve{D}_0^{(p^2)}
  \breve{D}_0^{(p)} = 0.
\end{displaymath}
Thus for $i\ge 0$ the lower right block of $\Delta_{i+e}\cdot\ldots\cdot\Delta_{i+1}$
has coefficients in $pS$, and it follows that all coefficients of
$\Delta_{i+e}\cdot\ldots\cdot\Delta_i$ lie in $pS$ as required.
This finishes the proof.
\end{proof}

\begin{cor} (Rigidity)\; 
Let $S \rightarrow R$, $\mathfrak{a}$ and $m \in
  \mathbb{N}$ be as in Proposition \ref{Pr-lift}. 

Let $\alpha_1, \alpha_2: \mathcal{P} \rightarrow \mathcal{P'}$ be two
morphisms of truncated displays of level $n$ over $S$. We denote the
truncated displays over $R$ which are obtained by base change with
$\bar{\mathcal{P}}$ and   $\bar{\mathcal{P}'}$ and assume that they
are nilpotent of order $\le e$. 

If the two morphisms $\bar{\alpha}_1,  \bar{\alpha}_2: 
\bar{\mathcal{P}} \rightarrow  \bar{\mathcal{P}'}$ agree, 
 we have
\begin{displaymath} 
\alpha_1[n-m(e+1)-1] = \alpha_2[n-m(e+1)-1] 
\end{displaymath} 
for the truncations.
\end{cor}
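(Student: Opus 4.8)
The plan is to deduce the corollary directly from Proposition~\ref{Pr-lift} by regarding the two morphisms over $S$ as morphisms of relative truncated displays for $S\to R$. First I would note that we may assume $n>m(e+1)+1$, since otherwise $n-m(e+1)-1\le 0$ and the truncation functor takes values in a trivial category, so there is nothing to prove. Then I would apply the reduction functor $\rho\colon\mathcal D_n(S)\to\mathcal D_n(S/R)$ to the objects $\mathcal P,\mathcal P'$ and to the morphisms $\alpha_1,\alpha_2$, obtaining relative truncated displays $\rho(\mathcal P),\rho(\mathcal P')$ of level $n$ for $S\to R$ together with two morphisms $\rho(\alpha_1),\rho(\alpha_2)\colon\rho(\mathcal P)\to\rho(\mathcal P')$. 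Since the composite of $\rho$ with the reduction $\mathcal D_n(S/R)\to\mathcal D_n(R)$ is the base change functor along $S\to R$, the relative display $\rho(\mathcal P)$ lifts $\bar{\mathcal P}$, the relative display $\rho(\mathcal P')$ lifts $\bar{\mathcal P}'$, and $\rho(\alpha_i)$ lifts $\bar\alpha_i$.

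Next I would invoke the uniqueness part of Proposition~\ref{Pr-lift}. By hypothesis $\bar{\mathcal P}$ and $\bar{\mathcal P}'$ are nilpotent of order $\le e$, and $\rho(\alpha_1),\rho(\alpha_2)$ are two lifts, between the fixed relative displays $\rho(\mathcal P)$ and $\rho(\mathcal P')$, of the single morphism $\bar\alpha:=\bar\alpha_1=\bar\alpha_2$. Hence Proposition~\ref{Pr-lift} yields that the truncations $\rho(\alpha_1)(n-m(e+1)-1)$ and $\rho(\alpha_2)(n-m(e+1)-1)$ coincide in $\mathcal D_{n-m(e+1)-1}(S/R)$.

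Finally I would transport this equality back to $S$. The reduction functors are compatible with truncation, so for $i=1,2$ the morphism $\rho(\alpha_i)(n-m(e+1)-1)$ is the image under $\rho$, at level $n-m(e+1)-1$, of the truncation $\alpha_i[n-m(e+1)-1]$; therefore $\alpha_1[n-m(e+1)-1]$ and $\alpha_2[n-m(e+1)-1]$ have the same image under $\rho\colon\mathcal D_{n-m(e+1)-1}(S)\to\mathcal D_{n-m(e+1)-1}(S/R)$. The last step is to observe that this $\rho$ is faithful: by Lemma~\ref{Le-faithful} and Lemma~\ref{Le-faithful-rel} the two $\Hom$-groups both inject, compatibly with $\rho$, into the group of homomorphisms between the underlying $\mathcal W_{n-m(e+1)-1}(S)$-modules, which $\rho$ leaves unchanged. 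Hence $\alpha_1[n-m(e+1)-1]=\alpha_2[n-m(e+1)-1]$, as claimed.

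I expect no serious obstacle here: the corollary is essentially a reformulation of Proposition~\ref{Pr-lift}, and the only points needing care are the bookkeeping facts that $\rho$ commutes with truncation and is faithful, both of which follow at once from the stated compatibility of the reduction functors with truncation and from Lemmas~\ref{Le-faithful} and~\ref{Le-faithful-rel}.
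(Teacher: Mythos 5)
Your proof is correct and follows essentially the same route as the paper: pass to the associated relative truncated displays, apply the uniqueness part of Proposition \ref{Pr-lift} to the two lifts of $\bar\alpha_1=\bar\alpha_2$, and descend the resulting equality back to $\mathcal D_{n-m(e+1)-1}(S)$. The only difference is that you spell out the faithfulness of the reduction functor via Lemmas \ref{Le-faithful} and \ref{Le-faithful-rel}, a step the paper leaves implicit in the phrase ``it suffices to prove the equation on the relative truncated displays.''
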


\begin{proof} 
Let $\tilde{\mathcal{P}}$ and  $\tilde{\mathcal{P}'}$ be
the relative truncated displays obtained from $\mathcal{P}$ and
$\mathcal{P}'$. It suffices to prove the equation of the Corollary
on the relative truncated displays. This is a statement of the Proposition. 
\end{proof} 

\subsection{The crystal of a nilpotent truncated display}

We explain how to associate to a nilpotent truncated display a
``truncated crystal''. 
We fix natural numbers $n$, $e$, and $m$ such that $n > m(e+1)+1$.
Let $R$ be a ring in which $p$ is nilpotent. 
Let $\Cris_m(R)$ be the category of 
all $pd$-thickenings $S \rightarrow R$ with kernel $\mathfrak{a}$ such that 
$p^{m}\mathfrak{a} = 0$.

Let $\mathcal{P}$ be a truncated display of level $n$ over $R$ which is
nilpotent of order $\leq e$.  We construct a locally free $S$-module 
$\mathbb{D}_{\mathcal{P}}(S)$ as follows. We choose a lifting of $\mathcal P$
to a relative 
truncated display $\tilde{\mathcal{P}}$ with respect to $S \rightarrow R$. 
Then we define 
\begin{equation}\label{AbCr1e}
\mathbb{D}_{\mathcal{P}}(S) = S \otimes_{\mathcal{W}_n(S)} \tilde{P}
\end{equation}
where the tensor product is taken with respect to the projection 
$\mathcal{W}_n(S) \rightarrow S$. In terms of the truncation of level $1$ of 
$\tilde{\mathcal{P}}$ we may write the right hand side of \eqref{AbCr1e} as
$S \otimes_{\mathcal{W}_1(S)}\tilde{P}[1]$. 
Therefore Proposition \ref{Pr-lift} shows that $\mathbb{D}_{\mathcal{P}}(S)$ 
does not depend on the choice of 
$\tilde{\mathcal{P}}$ and that $\mathbb{D}_{\mathcal{P}}(S)$ 
is functorial in $\mathcal{P}$. If $S_1 \rightarrow S_2$ is a morphism in 
$\Cris_m(R)$ we obtain a canonical isomorphism 
\begin{displaymath}
S_2 \otimes_{S_1} \mathbb{D}_{\mathcal{P}}(S_1) \cong \mathbb{D}_{\mathcal{P}}(S_2).
\end{displaymath}

Let $\bar{\mathcal{P}}$ be a truncated display of level $n$ over $R$, 
which is not necessarily nilpotent. Let 
$\mathcal{P}=(P,Q,\iota,\epsilon,F,\dot F)$ 
be a relative truncated display which lifts $\bar{\mathcal{P}}$. 
We consider the Hodge filtration of $\bar{\mathcal{P}}$:
\begin{displaymath}
R \otimes_{\mathcal{W}_{n}(R)} \bar{P} = \bar{P}/\kappa(I_{n+1}) \bar{P} 
\rightarrow \bar{P}/\iota(\bar{Q}).
\end{displaymath}
This homomorphism can be identified with the Hodge filtration of $\mathcal P$:
\begin{displaymath}
R \otimes_{\mathcal{W}_{n}(S)} P = P/\kappa(\mathcal{J}_{n+1})P \rightarrow 
P/\iota(Q).
\end{displaymath}
We define a lift of the Hodge filtration of $\PPP$ to $S$ as a commutative diagram:
\begin{displaymath}
\begin{CD}
S \otimes_{\mathcal{W}_{n}(S)} P @>>> \breve{T}\\
@VVV   @VVV\\
R \otimes_{\mathcal{W}_{n}(S)} P @>>> \bar{P}/\iota(\bar{Q}),
\end{CD}
\end{displaymath}
where $\breve{T}$ is a finitely generated projective $S$-module which lifts
$\bar P/\iota(\bar Q)$. 

We consider a truncated display $\mathcal{P}'$ of level $n$ whose
associated relative display is $\mathcal{P}$. 
We have $\mathcal P'=(P,Q',\iota,\epsilon,F,\dot F)$ with $Q'\subseteq Q$.
The Hodge filtration of $\mathcal P'$ is a lift of the Hodge filtration of $\mathcal P$, 
and we get back $Q'$ as the kernel of 
\[
Q\xrightarrow\iota S\otimes_{\mathcal W_n(S)}P\to P/Q'.
\]

Conversely, let $\breve T$ be a lift of the Hodge filtration of
$\mathcal P$ to $S$. Then we construct a truncated display $\mathcal{P}'$
as above whose Hodge filtration coincides with $\breve{T}$. 

Let $Q'$ be the kernel of 
\[
Q\xrightarrow\iota S\otimes_{\mathcal W_n(S)}P\to\breve T.
\]
We claim that we obtain a truncated display
$\mathcal P'=(P,Q',\iota,\epsilon,F,\dot F)$  of level $n$ over $S$.
It is easy to see that the restriction of $\epsilon:\mathcal J_{n+1}\otimes P\to Q$ 
to $I_{n+1,S}\otimes P$ lies in $Q'$, using that $\iota\circ\epsilon$ is the 
multiplication map.
Let $T\subseteq P$ and $L\subseteq Q$ be direct summands which give a 
normal decomposition of $\mathcal P$, which means that
$P\cong L\oplus T$ and $Q\cong\mathcal J_{n+1}\otimes T\oplus L$.
The composition $L\to P\to\breve T$ induces a homomorphism
$L\to\mathfrak{a}\breve T$, which we lift to $\phi: L\to W_{n+1}(\Fa)T$,
for example using the inclusion $\Fa\subset W_{n+1}(\Fa)$ by
the first logarithmic coordinate. If we replace the inclusion $i:L\to Q$
by $i-\phi$, then $L$ and $T$ define a normal decomposition for $\mathcal P'$.
The remaining axioms for truncated displays for $\mathcal P'$ follows easily.
Thus we have shown that lifts of $\mathcal P$ to truncated displays of
level $n$ over $S$ correspond to lifts of the Hodge filtration.

\begin{prop}\label{Ab3p}
Let $\bar{\mathcal{P}}$ be a truncated display of level $n$ over $R$ 
which is nilpotent of order $\le e$. Let $S\to R$ be a divided power
extension in $\Cris_m(R)$ with $n > m(e+1) + 1$. 
Then the isomorphism classes of liftings of $\bar{\mathcal{P}}$ 
to a truncated display of level $n$ over $S$ correspond bijectively to
liftings of the  
Hodge filtration of $\bar {\mathcal P}$ to $\mathbb{D}_{\bar{\mathcal{P}}}(S)$
as in the following diagram:
\begin{displaymath}
\hspace{-6.5em}
\xymatrix@M+0.2em{
\mathbb{D}_{\bar{\mathcal{P}}}(S) \ar[r] \ar[d] &
\breve{T} \ar[d] \\
\hspace{6.5em}\mathbb{D}_{\bar{\mathcal{P}}}(R) = R
\otimes_{\mathcal{W}_{n}(R)} \bar{P} \ar[r] & \bar{P}/\iota(\bar{Q})
}
\end{displaymath}
\end{prop}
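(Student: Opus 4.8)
The plan is to bootstrap the statement from the bijection, established in the discussion preceding the proposition, between lifts of a \emph{fixed} relative truncated display to truncated displays over $S$ and lifts of the Hodge filtration. The hypothesis $n>m(e+1)+1$ will enter only through the rigidity part of Proposition~\ref{Pr-lift}, used to show that any automorphism of the fixed relative lift reducing to the identity over $R$ acts trivially on $\mathbb D_{\bar{\mathcal P}}(S)$.

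First I would fix a relative truncated display $\tilde{\mathcal P}=(\tilde P,\tilde Q,\iota,\epsilon,F,\dot F)$ for $S\to R$ lifting $\bar{\mathcal P}$; such a lift exists, for instance by lifting a normal decomposition and its matrix along $W_n(S)\to W_n(R)$, whose kernel $W_n(\Fa)$ is a nilideal. Then one identifies $\mathbb D_{\bar{\mathcal P}}(S)=S\otimes_{\mathcal W_n(S)}\tilde P=\tilde P/\kappa(I_{n+1,S})\tilde P$. The discussion before the proposition says that the rule $\mathcal P'=(\tilde P,Q',\ldots)\mapsto(\mathbb D_{\bar{\mathcal P}}(S)\to\tilde P/\iota(Q'))$ is a bijection from the set of truncated displays $\mathcal P'$ over $S$ whose associated relative truncated display equals $\tilde{\mathcal P}$ onto the set of lifts of the Hodge filtration of $\bar{\mathcal P}$ to $\mathbb D_{\bar{\mathcal P}}(S)$. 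It remains to pass to isomorphism classes on the left.

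Next, given an arbitrary lift $\mathcal P'$ of $\bar{\mathcal P}$ to a truncated display over $S$, its associated relative truncated display is a lift of $\bar{\mathcal P}$, so by Proposition~\ref{Pr-lift} there is an isomorphism $\psi$ from it onto $\tilde{\mathcal P}$ reducing to the identity on $\bar{\mathcal P}$; transporting the submodule $Q'$ along $\psi$ produces a truncated display $\mathcal P''$ over $S$ with associated relative truncated display $\tilde{\mathcal P}$, together with an isomorphism $\mathcal P'\cong\mathcal P''$ over $\mathrm{id}_{\bar{\mathcal P}}$. I would attach to the class $[\mathcal P']$ the lift of the Hodge filtration corresponding to $\mathcal P''$ under the bijection above. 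Surjectivity is then immediate (take $\psi=\mathrm{id}$), and injectivity follows because if two such displays $\mathcal P''_1,\mathcal P''_2$ determine the same lift of the Hodge filtration then $\mathcal P''_1=\mathcal P''_2$ by the bijection, whence $\mathcal P'_1\cong\mathcal P''_1=\mathcal P''_2\cong\mathcal P'_2$ over $\mathrm{id}_{\bar{\mathcal P}}$.

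I expect well-definedness to be the main obstacle: the assigned lift of the Hodge filtration must be independent of the auxiliary $\psi$ and of the representative $\mathcal P'$ of the isomorphism class. In either case the two possible outcomes differ by an automorphism $\alpha$ of $\tilde{\mathcal P}$ that reduces to $\mathrm{id}_{\bar{\mathcal P}}$ and carries one of the relevant transported submodules of $\tilde Q$ onto the other. Since $\mathrm{id}_{\tilde{\mathcal P}}$ is another lift of $\mathrm{id}_{\bar{\mathcal P}}$, the rigidity part of Proposition~\ref{Pr-lift} shows that $\alpha$ agrees with the identity after truncation to level $n-m(e+1)-1$, which is $\ge1$ by hypothesis; in particular $\alpha[1]=\mathrm{id}$, and since $\mathbb D_{\bar{\mathcal P}}(S)=S\otimes_{\mathcal W_1(S)}\tilde P[1]$ depends only on the level-$1$ truncation, $\alpha$ induces the identity on $\mathbb D_{\bar{\mathcal P}}(S)$. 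Because $\alpha$ also commutes with $\iota$, the images in $\mathbb D_{\bar{\mathcal P}}(S)$ of the two transported submodules, namely the kernels of the two candidate lifted Hodge filtrations, coincide; this gives well-definedness and completes the proof.
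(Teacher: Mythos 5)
Your proposal is correct and follows essentially the same route as the paper: fix one relative lift, use the preceding discussion to identify its $S$-lifts with lifts of the Hodge filtration, and use Proposition \ref{Pr-lift} (existence of an isomorphism between relative lifts, plus rigidity of its level-$1$ truncation under $n>m(e+1)+1$) to pass to isomorphism classes. You merely make explicit the well-definedness argument that the paper compresses into the phrase ``by definition we have a well-defined isomorphism $\mathbb D_{\bar{\mathcal P}}(S)\cong S\otimes_{\mathcal W_n(S)}\tilde P$.''
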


\begin{proof} 
Let $\tilde{\mathcal{P}}$ be a lifting of $\bar{\mathcal{P}}$ to $S$, and
let $\tilde{\mathcal{P}}^{rel}$ be the associated relative truncated display for $S\to R$. 
By definition we have a well-defined isomorphism 
$\mathbb D_{\bar{\mathcal P}}(S)\cong S\otimes_{\mathcal W_n(S)}\tilde P$.
Thus the Hodge filtration of $\tilde P$ gives a lift of the Hodge filtration
as in the proposition. We obtain a map from the set of isomorphism classes
of liftings of $\bar{\mathcal P}$ to $S$ to the set of liftings of the Hodge filtration.
Since all liftings of $\bar{\mathcal P}$ to a relative
truncated display for $S\to R$ are isomorphic, the preceding considerations
show that this map is bijective.
\end{proof} 
We note that this Proposition gives only a bijection of isomorphism
classes. The bijection does not arise from an equivalence of categories.

\subsection{Lifting of displays}

Let $S\to R$ be a divided power extension
of rings in which $p$ is nilpotent with kernel $\Fa\subset S$. 
We want to see what the proof of Proposition \ref{Pr-lift} 
gives for non-truncated displays. 

We recall the definition of relative displays.
Let $\JJJ_{S/R}$ be the kernel of $W(S)\to R$. 
Let $\dot\sigma:I_S\to W(S)$ be the inverse of the
Verschiebung and let $\dot\sigma:\JJJ_{S/R}\to W(S)$ extend this
map by $\dot\sigma(x)=0$ if $x\in W(\Fa)$ is an element with 
logarithmic coordinates $[a,0,0,\ldots]$.
A relative display for $S\to R$ consists of $(P,Q,F,\dot F)$
where $Q\subseteq P$ are $W(S)$ modules and where
$F:P\to P$ and $\dot F:Q\to P$ are $\sigma$-linear maps such that

\smallskip
(i) {}
$P$ is a finitely generated projective $W(S)$-module,

\smallskip
(ii) {}
$\JJJ_{S/R}P\subseteq Q$ and $P/Q$ is a projective $R$-module,

\smallskip
(iii) {}
$\dot F(ax)=\dot\sigma(a)F(x)$ for $a\in\JJJ_{S/R}$ and $x\in P$,

\smallskip
(iv) {}
$\dot F(Q)$ generates $P$.

\begin{prop}
\label{Pr-lift-disp}
Let $\PPP_1$ and $\PPP_2$ be two relative displays for $S\to R$
and let $\bar\PPP_1$ and $\bar\PPP_2$ be their reductions to 
displays over $R$. We consider the reduction map
\[
\rho:\Hom(\PPP_1,\PPP_2)\to\Hom(\bar\PPP_1,\bar\PPP_2).
\]
Then the following hold.

\smallskip
\noindent
(a) {} If $\Fa$ is an $S$-module of finite length, the map $\rho$ is surjective,

\smallskip
\noindent
(b) {} If $\PPP$ and $\PPP'$ are nilpotent, the map $\rho$ is bijective.
\end{prop}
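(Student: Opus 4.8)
The plan is to rerun the proof of Proposition~\ref{Pr-lift} in the non-truncated setting, as in \cite{Zink-DFG}. First I would apply the unipotent trick: replacing $\bar\alpha$ by the automorphism $\left(\begin{smallmatrix}1&0\\\bar\alpha&1\end{smallmatrix}\right)$ of $\bar\PPP_1\oplus\bar\PPP_2$ (which is again nilpotent when $\bar\PPP_1,\bar\PPP_2$ are), it suffices to prove: for a display $\bar\PPP$ over $R$ and two relative displays $\PPP,\PPP'$ for $S\to R$ lifting $\bar\PPP$, there is an isomorphism $\alpha\colon\PPP\to\PPP'$ lifting $\id_{\bar\PPP}$, and $\alpha$ is unique when $\bar\PPP$ is nilpotent. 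Choosing a normal decomposition $\bar P=\bar T\oplus\bar L$ and lifting $\bar T,\bar L$ to $W(S)$-modules, $\PPP$ and $\PPP'$ are given by invertible matrices $\Theta,\Theta'$ over $W(S)$ with common reduction over $W(R)$, and a lift of $\id_{\bar\PPP}$ is an isomorphism $1+\left(\begin{smallmatrix}X&J\\Z&Y\end{smallmatrix}\right)$ whose entries lie in $W(\Fa)=\ker(W(S)\to W(R))\subseteq\JJJ_{S/R}$; since $W(\Fa)$ is a nilpotent ideal such an endomorphism is automatically invertible, so the only condition is the morphism equation \eqref{Ab6e}.

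The structural input is that $\sigma=F$ acts \emph{nilpotently} on $W(\Fa)$: since $\Fa$ carries divided powers, $a^p=p!\,\gamma_p(a)\in p\Fa$, and in divided-power logarithmic coordinates $F$ is $p$ times the shift, so $F^N=0$ as soon as $p^NS=0$. Writing \eqref{Ab6e} in logarithmic coordinates exactly as in the proof of Proposition~\ref{Pr-lift}, one obtains for every $i\ge0$ an equation
\[
   H(i)=R(i)+\Delta_i'\,H(i+1)\,\breve\Delta_i ,
\]
where $H(i)=\left(\begin{smallmatrix}X(i)&J(i)\\Z(i)&Y(i)\end{smallmatrix}\right)$ has entries in $\Fa$, $R(i)$ is a known matrix over $\Fa$, and $\Delta_i',\breve\Delta_i$ are formed from $\Theta',\Theta$ as in \eqref{Eq-Delta}; the difference of two solutions satisfies the homogeneous version $h(i)=\Delta_i'\,h(i+1)\,\breve\Delta_i$.

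For part~(b), when $\bar\PPP$ is nilpotent of order $\le e$, the estimate at the end of the proof of Proposition~\ref{Pr-lift} carries over verbatim: using $\Fa^p\subseteq pS$ and the nilpotence relation for $\breve D_0$ (Lemma~\ref{Le-nilpotent}), each product $\breve\Delta_{i+k(e+1)-1}\cdots\breve\Delta_i$ has entries in $p^kS$ and hence vanishes for $k\ge N$. Therefore the recursion has the unique finite-sum solution $H(i)=\sum_{k\ge0}(\Delta_i'\cdots\Delta_{i+k-1}')\,R(i+k)\,(\breve\Delta_{i+k-1}\cdots\breve\Delta_i)$, and the homogeneous recursion forces $h\equiv0$. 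This gives both existence and uniqueness of $\alpha$; consequently $\rho$ is bijective, injectivity following because a morphism $\PPP_1\to\PPP_2$ reducing to $0$ solves the corresponding homogeneous system and is therefore zero.

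For part~(a) I would first reduce to the case $\Fa^2=0$ by dévissage along the divided-power filtration $\Fa=\Fa^{[1]}\supseteq\Fa^{[2]}\supseteq\cdots$: the $\Fa^{[k]}$ are sub-$pd$-ideals with square-zero successive quotients, and the filtration reaches $0$ because $\Fa$ has finite length while a finitely generated idempotent $pd$-ideal of a ring in which $p$ is nilpotent vanishes. When $\Fa^2=0$ the module $W(\Fa)$ has square-zero multiplication, so \eqref{Ab6e} becomes \emph{linear} in $H\in M(W(\Fa))$; since $F$ is nilpotent on $W(\Fa)$, the blocks $X,Y,Z$ are eliminated by successive approximation, and what remains is the equation in the Hodge block, which in logarithmic coordinates reads $J(i)=R(i)+w_i(A')\,J(i+1)\,w_i(\breve D)$ and involves only the (non-nilpotent) shift. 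The main obstacle is precisely this equation: the backward-sum argument of part~(b) no longer applies, and one must instead produce a solution by choosing the logarithmic components successively forward, starting from a freely chosen $J(0)$ and, at each degree, adjusting by an element killed by the map $\xi\mapsto w_i(A')\,\xi\,w_i(\breve D)$ so that the next consistency condition is met. Here the finite length of $\Fa$ is used essentially; one may further reduce to $\Fa$ of length one, where $w_i(A')$ and $w_i(\breve D)$ are matrices over a residue field and the consistency conditions become transparent. This yields the surjectivity of $\rho$ asserted in~(a) (note that, unlike in~(b), $\rho$ need not be injective here, since the homogeneous recursion may have nonzero solutions for non-nilpotent $\PPP_i$).
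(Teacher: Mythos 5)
Your treatment of part (b) is essentially the paper's argument and is fine: the backward products $\breve\Delta_{i+k(e+1)-1}\cdots\breve\Delta_i$ land in $p^kS$, the recursion collapses to a finite sum, and uniqueness follows from the homogeneous system. The problem is part (a). You correctly identify that the whole content is the solvability of the infinite system
\[
H(i)=R(i)+\Delta_i'\,H(i+1)\,\breve\Delta_i,\qquad i\ge 0,
\]
but your proposed resolution does not work as described. Solving ``forward'' in $i$ means producing $H(i+1)$ from $H(i)$, which requires $H(i)-R(i)$ to lie in the \emph{image} of $\xi\mapsto\Delta_i'\,\xi\,\breve\Delta_i$; adjusting by elements of the \emph{kernel} of that map changes nothing at stage $i$, and the map need not be surjective (the blocks $w_i(\breve D)$, $pw_i(\breve B)$, etc.\ are in general far from invertible). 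The dévissage to $\Fa^2=0$ does not remove this obstruction, and the final sentence of your sketch is a restatement of the problem rather than a solution. So as written, (a) is not proved.

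The missing idea is homological: set $\HHH=M_{c+d}(\Fa)$ and $U_i(H)=\Delta_i'H\breve\Delta_i$. Then the system above is exactly the statement that $(R(i))_i$ maps to zero in
\[
{\varprojlim}^1\bigl(\HHH\xleftarrow{U_0}\HHH\xleftarrow{U_1}\HHH\xleftarrow{U_2}\cdots\bigr),
\]
and uniqueness of the solution is the vanishing of the corresponding $\varprojlim$. If $\Fa$ has finite length as an $S$-module, so does $\HHH$, hence the decreasing chains of images $U_i\circ\cdots\circ U_{i+k}(\HHH)$ stabilize; the system satisfies the Mittag--Leffler condition and $\varprojlim^1=0$. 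This gives surjectivity of $\rho$ in one step, with no dévissage and no analysis of the individual logarithmic components. (Your part (b) then amounts to the observation that $U_{i+k(e+1)-1}\circ\cdots\circ U_i=0$ for $p^k\Fa=0$, which kills both $\varprojlim$ and $\varprojlim^1$.) You should replace the forward-construction sketch in (a) by this Mittag--Leffler argument.
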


Assertion (b) is proved in \cite[Theorem 44]{Zink-DFG}.
We recall it here for completeness.

\begin{proof}
By passing to $\PPP_1\oplus\PPP_2$ it suffices to prove the following assertion.

\emph{Let $\bar\PPP$ be a display over $R$ and let $\PPP$ and $\PPP'$ be
two relative displays for $S\to R$ which lift $\bar\PPP$. If $\Fa$ is an $S$-module
of finite length then there is an isomorphism $\PPP\cong\PPP'$ which lifts
the identity. If $\bar\PPP$ is nilpotent then there is a unique isomorphism
$\PPP\cong\PPP'$.}

We can assume that $\PPP$ and $\PPP'$ have the same underlying
modules with normal decomposition $P=T\oplus L$ and $Q=\JJJ_{S/R}T\oplus L$.
For simplicity we assume that $T$ and $L$ are free, $T=W(S)^d$ and $L=W(S)^c$.
Then $\PPP$ and $\PPP'$ are given by matrices
$\left(\begin{smallmatrix}
A&B\\C&D
\end{smallmatrix}\right)$
and
$\left(\begin{smallmatrix}
A'&B'\\C'&D'
\end{smallmatrix}\right)$
with difference in $W(\Fa)$ :
\[
\left(\begin{matrix}
\eta_A&\eta_B\\\eta_C&\eta_D
\end{matrix}\right)=
\left(\begin{matrix}
A'&B'\\C'&D'
\end{matrix}\right)-
\left(\begin{matrix}
A&B\\C&D
\end{matrix}\right)
\]
The components of $\eta_A$ with respect to the isomorphism
$\log:W(\Fa)\cong\Fa^\infty$ are denoted by $\eta_A(n)$ for $n\ge 0$.
These are matrices with coefficients in $\Fa$.
As in \eqref{Ab7e} we are looking for matrices
\[
H(n)=\left(\begin{matrix}X(n) & J(n) \\ Z(n) & Y(n) \end{matrix}\right)
\]
for $n\ge 0$ with coefficients in $\Fa$ such that for $n\ge 0$ we have
\[
H(n)=R(n)+\Delta_n'H(n+1)\breve\Delta_n
\]
where $\breve\Delta_n$ and $\Delta_n'$ are defined as in \eqref{Eq-Delta}.
Here $R(n)$ for $n\ge 0$ are given matrices with coefficients in $\Fa$.
Let $\HHH=M_{c+d}(\Fa)$. For $n\ge 0$ let $U_n:\HHH\to\HHH$
be the map $U_n(H)=\Delta_n'H\Delta_n$. Then a solution $H(n)_{n\ge 0}$
exists if and only if the image of $R(n)_{n\ge 0}$ in
\[
{\varprojlim}^1(\HHH\xleftarrow{U_0}\HHH\xleftarrow{U_1}\HHH\xleftarrow{U_2}\cdots)
\]
is zero, and the solution is unique if and only if the $S$-module
\[
{\varprojlim}(\HHH\xleftarrow{U_0}\HHH\xleftarrow{U_1}\HHH\xleftarrow{U_2}\cdots)
\]
is zero. If $\Fa$ is an $S$-module of finite lenght, then $\HHH$ has finite
length. Thus $\varprojlim^1$ is zero by the Mittag-Leffler condition.
Assume that $\bar\PPP$ is nilpotent. Let $p^k\Fa=0$.
We saw in the proof of Proposition \ref{Pr-lift} that for each $n\ge 0$,
the product
$\breve\Delta_{n+k(e+1)-1}\cdot\ldots\cdot\breve\Delta_n$ has coefficients in $p^kS$.
Thus 
\[
U_{n+k(e+1)-1}\circ\ldots\circ U_n=0,
\]
which implies that $\varprojlim$ and $\varprojlim^1$ are zero.
\end{proof}

\section{Truncated $p$-divisible groups and displays}

\subsection{The functor from groups to displays}

Let $S \rightarrow R$ a $pd$-thickening with kernel $\mathfrak{a}$. We assume 
that the divided powers on $\mathfrak{a}$ are compatible with the canonical
divided powers on $pS$.
In \cite{Lau-Smoothness} one of us has defined a functor 
\begin{displaymath}
\Phi_{S/R}:\BBB\TTT(R) \rightarrow \mathcal{D}(S/R)
\end{displaymath}
from the category $p$-divisible groups over $R$ to the category of displays
relative to $S \rightarrow R$, and in the case $pR=0$ also a functor
\begin{equation}\label{L2functor1e}
\Phi_{n,R}:\BBB\TTT_n(R)\to\DDD_n(R)
\end{equation} 
from the category of truncated $p$-divisible groups over $R$ of level $n$
to the category of truncated displays over $R$ of level $n$.
We will indicate a few modifications 
to adapt this to the notion of truncated displays which we use here
and to the case of relative truncated displays. 

We will use the derived category $D^{\flat}(\mathcal{E})$ of an exact category 
$\mathcal{E}$. Let $A^{\flat}(\mathcal{E})$ be the full subcategory of the bounded
homotopy category $K^{\flat}(\mathcal{E})$ which consists of all complexes 
which split into short exact sequences in the sense of $\mathcal{E}$. 
By a result of \cite{Thomason}, 1.11.6 (see also \cite{Neeman}), 
$A^{\flat}(\mathcal{E})$ is \'epaisse as a subcategory 
if each idempotent $e: E \rightarrow E$ which factors as
$E \overset{\alpha}{\rightarrow} F \overset{\beta}{\rightarrow} E$ such that 
$\alpha \circ \beta = \id_{F}$, is a split idempotent. In this case the 
localization of $K^{\flat}(\mathcal{E})$ with respect to
$A^{\flat}(\mathcal{E})$ defines the derived  category $D^{\flat}(\mathcal{E})$.  

Let $\mathcal{G}$ be the category of finite locally free group schemes over 
$R$ which admit an embedding in a $p$-divisible group. We consider the bounded
derived category $D^{\flat}(\BBB\TTT(R))$ of $p$-divisible
groups. (Note that every idempotent in $\BBB\TTT(R)$ is split.) 
Writing an object of $G \in \mathcal{G}$ as 
a kernel of an isogeny of $p$-divisible groups we obtain a fully faithful 
functor
\begin{displaymath}
\mathcal{G} \rightarrow D^{\flat}(\BBB\TTT_R)
\end{displaymath}
The image of this functor lies in the full subcategory 
$D^{\flat}_{\leq 1}(\BBB\TTT_R)$ 
generated by complexes $X^{\cdot}$ of $p$-divisible groups for which $X^{i} = 0$ 
for $i \geq 2$. One can check that a morphism $X^{\cdot} \rightarrow Y^{\cdot}$
in the category $D^{\flat}_{\leq 1}(\BBB\TTT_R)$ may be represented by morphisms of 
complexes 
\begin{displaymath}
X^{\cdot} \leftarrow Z^{\cdot} \rightarrow Y^{\cdot},
\end{displaymath}
where $Z^{i} = 0$ for $i \geq 2$ and where the left arrow is a 
quasiisomorphism. 

Let us formalise the linear data of (relative) truncated displays.
Let $A$ be a ring and let 
$\kappa: \mathfrak{c}  \rightarrow A$ be a homomorphism
of $A$-modules such that for $x,y\in\mathfrak c$ we have
$$
\kappa(x)y=\kappa(y)x.
$$
Main example: Consider a surjective ring homomorphism $B\to A$ 
and an ideal $\mathfrak c\subset B$ which is an $A$-module,
i.e. annihilated by the kernel of $B \to A$. 
We take $\kappa:\mathfrak c\to B\to A$.

\begin{df}
An $(A,\mathfrak{c})$-module consists of 
$(M, N, \iota, \epsilon)$, where $M$ and $N$ are $A$-modules and $\iota$ and 
$\epsilon$ are $A$-module homomorphisms
\begin{displaymath}
\mathfrak{c} \otimes_{A} M \overset{\epsilon}{\rightarrow} N 
\overset{\iota}{\rightarrow} M,
\end{displaymath}
such that the composition of these two maps is the multiplication, i.e.\ 
$c \otimes m$ is mapped to $\kappa(c)m$, and such that the composition 
of the following maps is the multiplication:
\begin{displaymath}
\mathfrak{c} \otimes_{A} N \overset{\id \otimes \iota}{\longrightarrow }
\mathfrak{c} \otimes_{A} M \overset{\epsilon}{\rightarrow} N.
\end{displaymath}
\end{df}

The category of $(A,\mathfrak{c})$-modules is in the obvious way abelian.

If $\mathcal{P} = (P,Q,\iota, \epsilon, F, \dot{F})$ is a truncated display
over $R$ (resp.\ relative to $S \rightarrow R$) of level $n$, then 
$(P,Q,\iota, \epsilon)$ is a $(\mathcal{W}_n(R), I_{n+1})$ (resp. 
$(\mathcal{W}_n(S), \mathcal{J}_{n+1})$)-module. 
 
We call a sequence of truncated or relative truncated displays exact 
if the underlying sequence of $(\mathcal{W}_n(R),I_{n+1})$-modules
or $(\mathcal{W}_{m}(S),\mathcal{J}_{m+1})$-modules is exact. 
We obtain exact categories in which every idempotent is split; note
that all defining properties of (relative) truncated displays pass
over to direct summands. Thus again the bounded derived categories
exist:  
\begin{displaymath}
D^{\flat}(\mathcal{D}_m(S/R)), \quad D^{\flat}(\mathcal{D}_m(R)).
\end{displaymath}
Similarly we have the bounded derived categories 
$D^{\flat}(\mathcal{D}(R))$ and 
$D^{\flat}(\mathcal{D}(S/R))$ of displays and of relative displays.
For each natural number $m$ we obtain functors: 
\begin{equation}\label{Eq-GDbDm}
\mathcal{G} \rightarrow D^{\flat}_{\leq 1}(\BBB\TTT(R)) \xrightarrow{\Phi_{S/R}} 
D^{\flat}_{\leq 1}(\mathcal{D}(S/R)) \rightarrow 
D^{\flat}_{\leq 1}(\mathcal{D}_m(S/R)). 
\end{equation}

We  consider the category $\mathcal T_m=\mathcal{T}_m(S/R)$ of all data 
$(P,Q,\iota, \epsilon, F, \dot{F})$ as in the definition of a relative 
truncated display of level $m$, but we no longer require the conditions
(i), (iii), and (vi) of Definition \ref{Ab2d}. Let 
\begin{displaymath}
(P_1,Q_1,\iota_1, \epsilon_1, F_1, \dot{F}_1) \rightarrow 
(P_2,Q_2,\iota_2, \epsilon_2, F_2, \dot{F}_2)
\end{displaymath}
be a morphism in $\mathcal{T}_m$. Then one easily defines a cokernel 
$(P,Q,\iota, \epsilon, F, \dot{F})$ such that $(P,Q,\iota, \epsilon)$ is the
cokernel in the category of $(\mathcal{W}_n(S),
\mathcal{J}_{n+1})$-modules. 
For $G\in\GGG$ we apply \eqref{Eq-GDbDm} and take $H^1$, which 
is a cokernel.  In this way, for each natural number $m$ we obtain a functor:
\begin{equation}\label{AbLF1e}
\Phi_m:\mathcal{G} \rightarrow \mathcal{T}_m.
\end{equation}

\begin{prop}\label{AbLF1p}
Let $S \rightarrow R$ be a $pd$-thickening and assume that $p$ is nilpotent 
in $S$. Let $n \geq m$ be natural numbers such that $p^{n}W_{m}(S) 
= 0$. Then \eqref{AbLF1e} induces a functor
\begin{displaymath}
\Phi_m:
\BBB\TTT_n(R) \rightarrow \mathcal{D}_m (S/R)
\end{displaymath}
from the category of truncated $p$-divisible groups of level $n$ over $R$
to the category of truncated relative displays of level $m$ for $S\to R$.
\end{prop}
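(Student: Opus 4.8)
Here is how I would prove Proposition \ref{AbLF1p}.

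The plan is to reduce to the case $G=X[p^n]$ for a genuine $p$-divisible group $X$ over $R$, and there to use the single observation that $\Phi_m$ annihilates multiplication by $p^n$. Concretely: for a $p$-divisible group $Y$ over $R$, the object $\Phi_m(Y)$ obtained by applying $\Phi_{S/R}$ and truncating to level $m$ is an honest relative truncated display of level $m$, with underlying $\mathcal W_m(S)$-module $P$. Since $\mathcal W_m(S)\subseteq W_m(S)\times W_m(S)$, the hypothesis $p^nW_m(S)=0$ gives $p^n\mathcal W_m(S)=0$, hence $p^nP=0$, so $p^n\cdot\id_P=0$; by Lemma \ref{Le-faithful-rel} this forces $p^n\cdot\id_{\Phi_m(Y)}=0$ in $\End(\Phi_m(Y))$, and since $\Phi_m$ is additive it therefore sends multiplication by $p^n$ on $Y$ to $0$.

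First I would reduce to $G=X[p^n]$. The conditions (i), (iii), (vi) of Definition \ref{Ab2d}, which are exactly what must be checked for $\Phi_m(G)\in\TTT_m(S/R)$ to lie in $\DDD_m(S/R)$, are assertions about finite generation, projectivity and exactness of finitely many $\mathcal W_m(S)$- and $R$-modules, hence can be tested fppf-locally on $\Spec S$ — equivalently on $\Spec R$, since $\mathfrak a$ is a $pd$-ideal in a ring in which $p$ is nilpotent, so $S\to R$ is a nilpotent thickening — and $\Phi_m$ is compatible with fppf base change. Using the fact that a truncated Barsotti--Tate group of level $n$ is, fppf-locally on the base, isomorphic to $X[p^n]$ for a $p$-divisible group $X$ (and, together with fppf descent for relative truncated displays from the Appendix, the fact that $\Phi_m$ is a priori only given on $\GGG$), we may assume $G=X[p^n]$.

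Now I apply the construction \eqref{AbLF1e} to the resolution $0\to G\to X\xrightarrow{p^n}X\to 0$ of $G$ by $p$-divisible groups: by definition $\Phi_m(G)$ is $H^1$, that is, the cokernel in $\TTT_m(S/R)$, of the two-term complex $[\Phi_m(X)\xrightarrow{\Phi_m(p^n)}\Phi_m(X)]$, whose terms lie in $\DDD_m(S/R)$ because $X$ is a genuine $p$-divisible group. By the observation above $\Phi_m(p^n)=0$, so this cokernel is simply $\Phi_m(X)$. Hence $\Phi_m(G)\cong\Phi_m(X)$ lies in $\DDD_m(S/R)$, which is what had to be shown; functoriality is inherited from the functor $\Phi_m:\GGG\to\TTT_m$, and incidentally one also reads off a canonical identification $\Phi_m(X[p^n])\cong\Phi_m(X)$ that is used later (cf.\ Theorem \ref{Theorem2}).

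The step I expect to require the most care is the reduction: making precise that the defining conditions of a relative truncated display really are fppf-local, that $\Phi_m$ both commutes with fppf base change and descends from $\GGG$ to all of $\BBB\TTT_n(R)$, and the appeal to the fppf-local structure of $BT_n$-groups. Once $G=X[p^n]$, the argument via $\Phi_m(p^n)=0$ is immediate.
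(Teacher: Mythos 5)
Your key computation is exactly the one the paper uses: from $p^nW_m(S)=0$ one gets $p^n\mathcal W_m(S)=0$, hence $\Phi_m$ kills multiplication by $p^n$, so the resolution $0\to X(n)\to X\xrightarrow{p^n}X\to 0$ shows that $\Phi_m(X(n))$ is the $m$-truncation of the relative display of $X$ and in particular an honest object of $\DDD_m(S/R)$. The difference, and the place where your argument has a real gap, is the reduction to $G=X[p^n]$. You invoke the statement that a truncated Barsotti--Tate group of level $n$ is fppf-locally of the form $X[p^n]$ for a $p$-divisible group $X$. That statement is true but is substantially harder than anything you are allowed to assume here; the paper explicitly avoids it (a footnote in Remark 3.17 notes that even the \'etale-local version ``is more difficult to show''). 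What is available cheaply is only the much weaker fact that $G$ embeds Zariski-locally into a $p$-divisible group as the kernel of an isogeny (Raynaud), which is why the functor is first defined on the category $\GGG$ of such kernels; but for $G\hookrightarrow X_0\to X_1$ with $X_1\ne X_0$ your argument that the cokernel is a truncated display no longer applies, since $\Phi_m$ of the isogeny need not vanish. The paper's route for verifying conditions (i), (iii), (vi) in that generality is: lift $G$ to $S$ by Illusie's deformation theorem so as to reduce to $R=S$, pass by a faithfully flat map to complete noetherian local rings with perfect residue field, where Illusie's theorem does produce an $X$ with $G=X(n)$ and your computation applies, and then descend the projectivity and exactness statements using Corollary \ref{Cor:descent-module}.

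A second, smaller issue is your phrase ``fppf-locally on $\Spec S$ --- equivalently on $\Spec R$'': an arbitrary fppf cover of $R$ does not canonically lift to a flat cover of $S$ compatibly with the pd-structure, so localizing the thickening $S\to R$ along an fppf cover of $R$ needs justification. The paper sidesteps this by first lifting $G$ to $S$ (so that everything is pulled back from the trivial thickening $S=S$, for which base change of pd-thickenings is available) and only then localizing on $\Spec S$. If you replace your fppf reduction by this chain --- Zariski-local embedding into a $p$-divisible group to define $\Phi_m(G)$, Illusie lifting to reduce to $R=S$, faithfully flat descent to complete local rings with perfect residue field, and then your $p^n=0$ computation --- you recover the paper's proof.
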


Note that in the case $pS=0$ we can take $n=m$.

\begin{proof} 
The condition $p^{n}W_{m}(S) = 0$ is equivalent with
$p^{n}\mathcal{W}_{m}(S) = 0$. 
 
Let $\mathcal{G}_n$ the full subcategory of $\mathcal{G}$ 
which consists of truncated $p$-divisible groups. 
We claim that the functor $\Phi_m$ of \eqref{AbLF1e} restricted to $\GGG_n$
takes values in the category of relative truncated displays of level $m$,
i.e.\ that the conditions (i), (iii), and (vi) of Definition \ref{Ab2d}
are satisfied.

For $G\in\GGG$ let $\Phi_m(G)=(P,Q,\iota,\epsilon,F,\dot F)$.
Here $P$ is a $\WWW_m(S)$-module of finite presentation. 
$\Coker(\iota)$ and $\Coker(\epsilon)$ are $R$-modules of
finite presentation. These modules are compatible with
base change under homomorphisms of $pd$-thickenings from
$S\to R$ to $S'\to R'$. 

By \cite[Th\'eor\`eme 4.4]{Illusie-BT} we may lift a truncated $p$-divisible group over $R$ to
a truncated $p$-divisible group over $S$. Zariski locally on $\Spec S$,
the lifted group can be embedded into a $p$-divisible group.
Therefore, to prove the claim we may assume that $R=S$.

If $X$ is a $p$-divisible group over $R$ and if $X(n)$ is its truncation,
we can use the resolution $0 \rightarrow X(n) \rightarrow X 
\overset{p^{n}}{\rightarrow } X$ to compute $\Phi_m(G)$.
Since we assumed that $p^n\WWW_m(S)=0$, we get that
$\Phi_m(G)$ is the $m$-truncation of the display associated to $X$.
So the claim is proved in this case.

If $R$ is a noetherian complete local ring with perfect
residue field, each truncated $p$-divisible group $G$ of level $n$
over $R$ takes the form $X(n)$ for a $p$-divisible group $X$
by \cite[Th\'eor\`eme 4.4]{Illusie-BT}. So the claim holds over $R$.

If $R$ is a noetherian ring, for each prime ideal $\Fp$
of $A$ we find a faithfully flat ring homomorphism $\hat A_\Fp\to A'$
where $A'$ is a noetherian complete local ring with perfect (or algebraically closed)
residue field. 
By descent (see Corollary \ref{Cor:descent-module}) it follows that for $G\in\GGG_n$,
$\Phi_m(G)$ satisfies the conditions (i), (iii), and (vi) of Definition \ref{Ab2d}.
This proves the claim when $R$ is noetherian. 

Since a truncated $p$-divisible group embeds Zariski locally in a
$p$-divisible group, if $R$ is noetherian we can extend the functor
$\Phi_m$ from $\mathcal{G}_n$ to all truncated $p$-divisible groups by
descent
of truncated displays (see Proposition \ref{Pr:descent-disp}).
Finally we can use base change to define the functor $\Phi_m$
over a base which is not noetherian.
\end{proof}

For $S=R$ with $pR=0$ and $n=m$, 
the functor $\Phi_n$ of Proposition \ref{AbLF1p} is \eqref{L2functor1e}.
We note that this functor preserves the order of nilpotence:

\begin{lemma}
\label{Le:nilpotence-Phi}
Let $R$ be a ring with $pR=0$.
A truncated $p$-divisible group $G$ over $R$ of level $n\ge 1$
has nilpotence order $e$ iff the associated truncated display
$\PPP=\Phi_n(G)$ has nilpotence order $e$.
\end{lemma}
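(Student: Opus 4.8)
The plan is to reduce the statement to the relationship between the nilpotence order of $G$, defined via the Frobenius on $G(1)$, and the nilpotence order of $\PPP$, defined via the iterated $\bar V^\sharp$ on $\iota(Q)/I_nP$. Since both orders are computed on the first truncation --- the nilpotence order of $G$ only sees $G(1)$, and by the remark following Lemma~\ref{Le-nilpotent} the nilpotence order of $\PPP$ is unchanged under truncation, so it may be computed on $\PPP(1)$ --- I may assume $n=1$ throughout. First I would recall, as in the proof of Lemma~\ref{Le:nilpotence}, that the only input needed is the identification of $\Lie(G^\vee)$ with $Q/I_RP$ in such a way that the Verschiebung $V$ of $G^\vee$ corresponds to $\bar V^\sharp$. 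For this one must know that the display-theoretic data attached to $G$ by $\Phi_{n,R}$ computes the Dieudonn\'e module of $G^\vee$ correctly in level~$1$; this is part of the construction in \cite{Lau-Smoothness} and is exactly the compatibility used implicitly in Theorem~\ref{AbI1t}.

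The key steps, in order: (1) Reduce to $n=1$ using that both nilpotence orders depend only on the first truncation and that $\Phi_{n,R}$ is compatible with truncation. (2) Use the construction of $\Phi_{1,R}$ via the resolution $0\to G\to X\xrightarrow{p}X\to\cdots$ when $G=X[p]$, or more generally via \eqref{Eq-GDbDm} and $H^1$, to identify the pair $(Q/I_RP,\ P/\iota Q)$ together with the operator $\bar V^\sharp$ with the classical contravariant Dieudonn\'e data of $G$ in level~$1$: concretely $Q/I_RP\cong\Lie(G^\vee)$ and $P/\iota Q\cong\Lie(G)$, with $\bar V^\sharp$ corresponding to the Verschiebung of $G^\vee$ and the complementary operator to the Frobenius of $G$. (3) Invoke the equivalence, already cited in the proof of Lemma~\ref{Le:nilpotence}, between finite group schemes annihilated by $F$ (here applied to $\bar G^\vee$, where $0\to G[F]\to G(1)\to\bar G\to 0$) and $p$-Lie algebras over $R$, to conclude that $F_G^{e+1}=0$ on $G(1)$ iff $(\bar V^e)^\sharp=0$ on $Q/I_RP$ iff $(\bar V^\sharp)^e=0$ in the sense of Definition~\ref{Le-nilpotent}. (4) Match this with the definition of nilpotence order of $\PPP$: by Lemma~\ref{Le-nilpotent} this is the smallest $e$ with $\hat D_0^{(p^{e-1})}\cdots\hat D_0=0$, which by the discussion preceding that lemma is exactly the vanishing of $(\bar V^e)^\sharp$ on $\iota(Q)/I_nP$, and $\iota$ induces $Q/\Image\epsilon\hookrightarrow \iota(Q)/I_nP$ so the two operators agree after level-$1$ truncation. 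Combining (3) and (4) gives the equality of orders.

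Essentially the argument is: \emph{both sides are computed by the same linear-algebra operator on $\Lie(G^\vee)\cong Q/I_RP$, once one knows $\Phi_{1,R}$ recovers the level-$1$ Dieudonn\'e datum of $G^\vee$}; then Lemma~\ref{Le:nilpotence} (applied in the formal case) and its proof carry over verbatim. So I would phrase the proof as: ``Since $\Phi_{n,R}$ is compatible with truncation and both nilpotence orders are determined by the first truncation, we may assume $n=1$. As in the proof of Lemma~\ref{Le:nilpotence}, $\Lie(G^\vee)\cong Q/I_RP$ with the Verschiebung of $G^\vee$ corresponding to $\bar V^\sharp$; this holds because $\Phi_{1,R}$ computes the covariant Dieudonn\'e module of $G$ in level~$1$. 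The argument of Lemma~\ref{Le:nilpotence} then applies word for word.''

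The main obstacle will be step~(2): making precise, and citing correctly from \cite{Lau-Smoothness}, that the level-$1$ output of $\Phi_{1,R}$ --- constructed here via the derived-category/cokernel recipe \eqref{Eq-GDbDm}--\eqref{AbLF1e} rather than directly --- really does reproduce the Verschiebung on $\Lie(G^\vee)$ as $\bar V^\sharp$. Once $G$ is written as $X[p^n]$ for a $p$-divisible group $X$ (possible after faithfully flat descent, as in the proof of Proposition~\ref{AbLF1p}), $\Phi_1(G)$ is the level-$1$ truncation of the Zink display of $X$, for which this compatibility is classical; the only real work is checking it survives the truncation and the cokernel construction, and that it is compatible with the descent used to handle non-noetherian or non-embeddable $G$. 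Everything after that is the bookkeeping of Lemma~\ref{Le:nilpotence}, which the authors have already done.
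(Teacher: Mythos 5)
Your proposal is correct and follows essentially the same route as the paper: the paper's proof consists precisely of asserting that the construction of $\Phi_n(G)$ identifies $\Lie(G^\vee)$ with $\iota(Q)/I_nP$ so that the Verschiebung of $G^\vee$ corresponds to $\bar V^\sharp$, and then running the $\bar G$ / $p$-Lie-algebra argument from the proof of Lemma \ref{Le:nilpotence}, which is exactly your steps (2)--(4). Your additional reduction to $n=1$ and your explicit flagging of where the identification comes from in \cite{Lau-Smoothness} are harmless elaborations of what the paper leaves implicit.
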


\begin{proof}
The construction of $\Phi_n(G)$ gives an isomorphism
$\Lie(G^\vee)\cong \iota(Q)/I_nP$ such that the
Verschiebung $V$ of $G^\vee$ corresponds to the
homomorphism $\bar V^\sharp$ of \eqref{AbVis3e}.
Then the lemma follows from the proof of Lemma \ref{Le:nilpotence}.
\end{proof}

\begin{prop}
\label{Pr:DXDP}
Let $t\ge n$ such that $p^tW_n(R)=0$.
Let $X$ be a $p$-divisible group over $R$ such that $X_{R/pR}$
is nilpotent of order $\le e$, and let $\PPP=\Phi_n(X(t))$.
If $S\to R$ is an object of $\Cris_m(R)$ with $n>m(e+1)+1$, we have
a canonical isomorphism
\begin{equation}
\label{Eq:DXDP}
\mathbb D_X(S)\cong\mathbb D_\PPP(S)
\end{equation}
between the Grothendieck-Messing crystal of $X$ and the crystal
of $\PPP$, evaluated at $S\to R$.
\end{prop}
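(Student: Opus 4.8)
The plan is to compare both sides of \eqref{Eq:DXDP} by evaluating them on a single lift of $X$ (equivalently of $\mathcal P$) and identifying the underlying modules. First I would fix an object $S\to R$ of $\Cris_m(R)$ with kernel $\mathfrak a$ satisfying $p^m\mathfrak a=0$, and, using \cite[Th\'eor\`eme 4.4]{Illusie-BT}, lift $X$ to a $p$-divisible group $\tilde X$ over $S$ (after noting that, since $\mathbb D_X(S)$ and $\mathbb D_{\mathcal P}(S)$ are both compatible with base change in $S\to R$, it suffices to treat the universal case or to work Zariski-locally, so that $\tilde X$ exists globally and embeds into a $p$-divisible group as in the proof of Proposition \ref{AbLF1p}). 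On the Grothendieck--Messing side, $\mathbb D_X(S)$ is by definition the evaluation of the Messing crystal of $X$ at $S\to R$, and since $\tilde X$ lifts $X$ this equals the Lie algebra of the universal vector extension of $\tilde X^\vee$, i.e.\ $\mathbb D_X(S)\cong \mathbb D_{\tilde X}(S)$, which is a locally free $S$-module lifting $\mathbb D_X(R)$.

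Next I would compute the display side. Choose $t\ge n$ with $p^tW_n(R)=0$; then also $p^tW_n(S)=0$ is \emph{not} automatic, so I would instead invoke Proposition \ref{AbLF1p} in the relative form: the functor $\Phi_n:\BBB\TTT_t(R)\to\mathcal D_n(S/R)$ (applicable since one may enlarge $t$ so that $p^tW_n(S)=0$, using $p^m\mathfrak a=0$ and $p$ nilpotent in $R$) sends $X(t)$ to a relative truncated display $\mathcal P^{\mathrm{rel}}$ for $S\to R$ whose reduction to $R$ is $\mathcal P=\Phi_n(X(t))$. By the construction of $\Phi_n$ via the resolution $0\to X(t)\to X\xrightarrow{p^t}X$ (valid over $S$ because $p^t W_n(S)=0$), the relative display $\mathcal P^{\mathrm{rel}}$ is the $n$-truncation of the relative display $\Phi_{S/R}(X)$ of \cite{Lau-Smoothness}. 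By the crystal construction \eqref{AbCr1e}, $\mathbb D_{\mathcal P}(S)=S\otimes_{\mathcal W_n(S)}\tilde P$ where $\tilde P$ is the $P$-module of \emph{any} lift of $\mathcal P$ to a relative truncated display for $S\to R$; by Proposition \ref{Pr-lift} this is independent of the lift, so I may take the lift $\mathcal P^{\mathrm{rel}}$ just produced.

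Therefore the identification reduces to the comparison, at the level of the relative display $\Phi_{S/R}(X)$, between $S\otimes_{\mathcal W_n(S)}\tilde P$ and the Messing crystal value $\mathbb D_{\tilde X}(S)$ — and this comparison, for the (non-truncated) relative display $\Phi_{S/R}(X)$, is exactly the main compatibility result of \cite{Lau-Smoothness} between the display functor and the Grothendieck--Messing crystal: $S\otimes_{W(S)}P(\Phi_{S/R}(X))\cong\mathbb D_{\tilde X}(S)$, compatibly with the Hodge filtrations. Passing to truncation level $n$ changes nothing, because $S\otimes_{\mathcal W_n(S)}\tilde P$ only sees $\tilde P$ modulo $I_{n+1}$ — indeed modulo $\mathcal J_{n+1}$ — so the formula $S\otimes_{\mathcal W_n(S)}\tilde P=S\otimes_{\mathcal W_1(S)}\tilde P[1]$ from the discussion after \eqref{AbCr1e} shows the value depends only on $\mathcal P^{\mathrm{rel}}[1]$, hence only on $\Phi_{S/R}(X)\bmod\mathcal J$. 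Functoriality in $S\to R$ and in $X$ follows since every step (the lift $\tilde X$, the resolution, the crystal formula, the \cite{Lau-Smoothness} comparison) is functorial, or can be made so by the projective-limit trick used for the base change functor \eqref{AbBC1e}.

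The main obstacle I expect is the bookkeeping with the auxiliary parameter $t$ and the passage from $R$ to $S$: one needs $p^tW_n(S)=0$ to use the Kummer-type resolution over $S$, and one must check that enlarging $t$ does not change $\mathcal P=\Phi_n(X(t))$ up to canonical isomorphism (it does not, because $X(t')$ for $t'\ge t$ has the same $n$-truncated display once $p^tW_n(R)=0$). The remaining delicate point is to verify that the isomorphism $\mathbb D_X(S)\cong\mathbb D_{\mathcal P}(S)$ is \emph{canonical}, i.e.\ independent of the chosen lift $\tilde X$; this follows because $\mathbb D_X(S)$ is lift-independent by the crystal property of the Messing crystal, and $\mathbb D_{\mathcal P}(S)$ is lift-independent by Proposition \ref{Pr-lift}, and on any common lift the two agree by \cite{Lau-Smoothness}.
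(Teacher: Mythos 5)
Your proposal is correct and follows essentially the same route as the paper: the relative display $\Phi_{S/R}(X)$, truncated to level $n$, is a lift of $\PPP$, so by \eqref{AbCr1e} the crystal $\mathbb D_\PPP(S)$ is $S\otimes\tilde P$ for that lift, and this module is identified with $\mathbb D_X(S)$ by the construction of $\Phi_{S/R}$ in \cite{Lau-Smoothness}. The paper's proof is shorter because it works directly with $\Phi_{S/R}(X)$ over $R$ and thus avoids your detours through a lift $\tilde X$ over $S$ and through enlarging $t$ to get $p^tW_n(S)=0$; these detours are harmless but unnecessary.
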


\begin{proof}
Let $\PPP_X=\Phi_R(X)$ be the display of $X$ and $\tilde\PPP_X=\Phi_{S/R}(X)$
the display relative to $S\to R$ associated to $X$. 
Since $\tilde\PPP_X(n)$ is a lift of $\PPP_X(n)=\PPP$, by \eqref{AbCr1e}
the right hand side of \eqref{Eq:DXDP} is $S\otimes_{W(S)}\tilde P$.
By the construction of the functor $\Phi_{S/R}$, this module also coincides with the 
left hand side of \eqref{Eq:DXDP}.
\end{proof}

Later we will use the following consequence.

\begin{cor}
\label{Co:DefXXt}
Let $S$ be a ring with $p^{m+1}S=0$.
Let $X$ be a $p$-divisible group over $R=S/pS$ which is nilpotent of order $e$.
For $t\ge m(e+2)+2$ the set of isomorphism classes of lifts of $X$ to $S$ is
bijective to the set of  isomorphism classes of lifts of $X(t)$ to $S$.
\end{cor}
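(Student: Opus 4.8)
The plan is to show that the truncation map $\theta\colon Y\mapsto Y(t)$, from isomorphism classes of lifts of $X$ to $S$ to isomorphism classes of lifts of $X(t)$ to $S$, is a bijection, by comparing both sides with lifts of the Hodge filtration in the Grothendieck--Messing crystal $\mathbb D_X(S)$.

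\emph{Setup.} The kernel $\Fa=pS$ of $S\to R=S/pS$ carries its canonical divided powers, is nilpotent, and satisfies $p^m\Fa=p^{m+1}S=0$; hence $S\to R$ is an object of $\Cris_m(R)$ and Grothendieck--Messing theory applies. Put $n=m(e+1)+2$, so $n>m(e+1)+1$. Since $pR=0$ one has $p^nW_n(R)=0$, and since $p^{m+1}S=0$ one has $p^{n+m}W_n(S)=0$ and $p^tW_{t-m}(S)=0$; as $t\ge m(e+2)+2=n+m$ this gives $p^tW_n(S)=0$. Thus the functor $\Phi_n$ of Theorem~\ref{Theorem2} is defined on $\BBB\TTT_t$ both over $R$ and over $S$, and likewise $\Phi_{t-m}$; note also $t-m>m(e+1)+1$, which is exactly the hypothesis $t\ge m(e+2)+2$. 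Let $\PPP=\Phi_n(X(t))\in\DDD_n^{(e)}(R)$. By Proposition~\ref{Pr:DXDP} there is a canonical isomorphism $\mathbb D_X(S)\cong\mathbb D_\PPP(S)$ compatible with the Hodge filtrations, and the same holds with $\PPP$ replaced by $\Phi_{t-m}(X(t))$.

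\emph{Comparison.} By Grothendieck--Messing, isomorphism classes of lifts of $X$ to $S$ are in bijection with lifts of the Hodge filtration of $X$ in $\mathbb D_X(S)\cong\mathbb D_\PPP(S)$; by Proposition~\ref{Ab3p} (applicable since $n>m(e+1)+1$) these are in bijection with isomorphism classes of lifts of $\PPP$ to a truncated display of level $n$ over $S$. On the other side, a lift $G$ of $X(t)$ to $S$ is again nilpotent of order $e$ (Definition~\ref{Def:nilpotence-pdiv}), and $\Phi_n(G)$ over $S$ is a lift of $\PPP$; this defines a map $\{\text{lifts of }X(t)\}\to\{\text{lifts of }\PPP\}$. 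From the construction of $\Phi_n$ through the crystal (as in the proof of Proposition~\ref{Pr:DXDP}) one checks that the composite $\{\text{lifts of }X\}\xrightarrow{\theta}\{\text{lifts of }X(t)\}\xrightarrow{\Phi_n}\{\text{lifts of }\PPP\}$ is the bijection just described; hence $\theta$ is injective and $\Phi_n$ is surjective on isomorphism classes. For the injectivity of the composite one invokes the rigidity of Proposition~\ref{Pr-lift}: an isomorphism $\Phi_n(Y(t))\cong\Phi_n(Y'(t))$ over $\mathrm{id}_\PPP$ acts as the identity on $\mathbb D_\PPP(S)$, because that module depends only on the truncation of level $1$ and $n-(m(e+1)+1)=1$, so it carries the Hodge filtration of one lift to that of the other.

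\emph{The main obstacle.} It remains to prove that $\theta$ is surjective, i.e.\ that a lift $G$ of $X(t)$ to $S$ is determined up to isomorphism by its level-$n$ display; the difficulty is that this display only reflects $G$ in levels $\le n$, whereas $G$ has level $t\gg n$. Starting from $G$, one produces a lift $Y$ of $X$ by transporting the Hodge filtration of $\Phi_{t-m}(G)$ through $\mathbb D_{\Phi_{t-m}(X(t))}(S)\cong\mathbb D_X(S)$ and applying Grothendieck--Messing; then $\Phi_{t-m}(Y(t))$ and $\Phi_{t-m}(G)$ are lifts of $\Phi_{t-m,R}(X(t-m))$ with the same Hodge filtration, hence isomorphic by Proposition~\ref{Ab3p} applied at level $t-m$ (this is where the bound $t\ge m(e+2)+2$ is used in full). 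What is still needed is the passage from an isomorphism of level-$(t-m)$ displays to an isomorphism $Y(t)\cong G$, i.e.\ that a lift of $X(t)$ is recovered from its level-$(t-m)$ display; this is the deformation-theoretic counterpart of the fact that $BT_{m,R}$ inverts $\Phi_{n,R}$ for $n\ge m(e+1)$, and I expect it to be the step demanding the most care. It can be supplied by Illusie's deformation theory of truncated Barsotti--Tate groups (\cite{Illusie-BT}), which for $t$ in the stated range identifies the deformations of $X(t)$ along objects of $\Cris_m(R)$ with those of $X$; alternatively one extends $G$ level by level to a compatible system $(G_s)_{s\ge t}$ of lifts of $(X(s))_{s\ge t}$ and sets $Y=\varinjlim_s G_s$. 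Assembling the bijections then yields the Corollary.
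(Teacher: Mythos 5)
Your proposal is correct and follows essentially the same route as the paper: set $n=m(e+1)+2$, observe that the composite $\Def_{S/R}(X)\to\Def_{S/R}(X(t))\xrightarrow{\Phi_n}\Def_{S/R}(\PPP)$ is bijective because both ends are classified by lifts of the Hodge filtration in $\mathbb D_X(S)\cong\mathbb D_\PPP(S)$ (Grothendieck--Messing, Proposition \ref{Ab3p}, Proposition \ref{Pr:DXDP}), and obtain surjectivity of the first map from Illusie's Th\'eor\`eme 4.4. The detour through $\Phi_{t-m}$ in your last paragraph is unnecessary, since once injectivity of $\theta$ is known the surjectivity supplied by Illusie (lifting $G$ level by level) already closes the argument.
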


\begin{proof}
Let $n=m(e+1)+2$. Then $t\ge n+m$ and thus $p^tW_n(S)=0$, using that
$p^nW_n(R)=0$ and $p^mW_n(pS)=0$. Let  $\PPP=\Phi_n(X(t))$. We have two maps
\[
\Def_{S/R}(X)\to \Def_{S/R}(X(t))\xrightarrow{\Phi_n} \Def_{S/R}(\PPP)
\]
where $\Def_{S/R}$ means set of isomorphism classes of lifts to $S$.
By \cite[Th\'eor\`eme 4.4]{Illusie-BT}, the first map is surjective. 
Propositions \ref{Pr:DXDP} implies that the composition is bijective,
using that deformations of $X$ and of $\PPP$ are both classified by 
lifts of the Hodge filtration, by Proposition \ref{Ab3p} and by the Grothendieck-Messing Theorem.
It follows that the first map is bijective.
\end{proof}

\subsection{Exactness and Duality}

Let us return for a moment to the study of $(A,\mathfrak c)$-modules.
Let $\mathfrak c\to A$ be as above.
Let $\mathfrak{u}$ be the kernel of $\kappa:\mathfrak{c} \rightarrow A$
and let $R$ be its cokernel,
$$
0\to\mathfrak u\to\mathfrak c\to A\to R\to 0.
$$
Here $R$ is a factor ring of $A$ by the ideal $\kappa(\mathfrak{c})$,
and $\mathfrak u$ is an $R$-module 
since we have $\kappa(\mathfrak c)\mathfrak u=\kappa(\mathfrak
u)\mathfrak c=0$. We assume in the following that all finitely
generated projective $R$-modules lift to finitely generated projective
$A$-modules.  

For given finitely generated projective $A$-modules $T$ and $L$ we define an 
$(A,\mathfrak{c})$-module $\mathcal{S}(T,L)$ as follows: We set
\begin{displaymath}
M = T \oplus L, \quad N = \mathfrak{c} \otimes_{R} T \oplus L,
\end{displaymath}
with the obvious maps $\iota$ and $\epsilon$; see Definition \ref{AbND1d}.
An $(A,\mathfrak{c})$-module which is isomorphic to some $\mathcal{S}(T,L)$ 
will be called standard projective, and $M = T \oplus L$ is called a normal 
decomposition.  

For an arbitrary 
$(A,\mathfrak{c})$-module $(M, N, \iota, \epsilon)$ we have a canonical 
isomorphism
\begin{displaymath}
\Hom (\mathcal{S}(T,L), (M, N, \iota, \epsilon)) = \Hom_{R}(T,M) \oplus 
\Hom_R(L,N).
\end{displaymath}
In particular $\mathcal{S}(T,L)$ is a projective object in the category of 
$(A,\mathfrak{c})$-modules (this does not need that $T$ and $L$ are finitely generated). 
Obviously we have projective resolutions in this category. 

To each $(A,\mathfrak{c})$-module $(M,N,\iota,\epsilon)$ we associate the following
complex of $A$-modules:
\begin{equation}\label{Ab4term1e}
0 \rightarrow \mathfrak{u}\otimes_{A} (M/N) \xrightarrow{\epsilon'} 
N \rightarrow M \rightarrow M/N \rightarrow 0 
\end{equation}
where $\varepsilon'$ is the restriction of $\epsilon$.

\begin{lemma}
\label{Le-std-proj}
An $(A,\mathfrak c)$-module $\breve M=(M,N,\iota,\epsilon)$ is standard projective iff the
following holds.
\begin{enumerate}
\item[(i)]
$M$ is a finitely generated projective $A$-module,
\item[(ii)]
$M/N$ is a finitely generated projective $R$-module,
\item[(iii)]
the sequence \eqref{Ab4term1e} is exact.
\end{enumerate}
\end{lemma}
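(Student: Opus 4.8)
The statement to prove is Lemma~\ref{Le-std-proj}: an $(A,\mathfrak c)$-module $\breve M=(M,N,\iota,\epsilon)$ is standard projective if and only if $M$ is finitely generated projective over $A$, $M/N$ is finitely generated projective over $R$, and the four-term sequence \eqref{Ab4term1e} is exact. One direction is essentially bookkeeping: if $\breve M=\mathcal S(T,L)$ with $M=T\oplus L$ and $N=\mathfrak c\otimes_R T\oplus L$, then $M$ is obviously finitely generated projective over $A$, and $M/N\cong (T\oplus L)/(\mathfrak c\otimes_R T\oplus L)=(T/\kappa(\mathfrak c)T)\cong R\otimes_A T$ is finitely generated projective over $R$; moreover $\mathfrak u\otimes_A(M/N)\cong\mathfrak u\otimes_R(R\otimes_A T)\cong\mathfrak u\otimes_R T$, and one checks directly that the sequence $0\to\mathfrak u\otimes_R T\to\mathfrak c\otimes_R T\oplus L\to T\oplus L\to R\otimes_A T\to 0$ is exact, this being the direct sum of the exact sequence $0\to\mathfrak u\otimes_R T\to\mathfrak c\otimes_R T\to T\to R\otimes_A T\to 0$ (got by tensoring $0\to\mathfrak u\to\mathfrak c\to A\to R\to 0$ with the flat, or rather projective, $R$-module $T$ — here one uses that $\mathfrak u,\mathfrak c$ are viewed appropriately, and $T$ projective over $A$ gives $R\otimes_A T$ projective over $R$ and $\mathrm{Tor}$-vanishing) with the trivially exact complex $0\to 0\to L\xrightarrow{\id}L\to 0\to 0$.

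**The harder direction.** Now assume (i), (ii), (iii) hold; we must produce a normal decomposition. The natural idea, mirroring the proof of Proposition~\ref{Pr-norm-dec}, is: choose a finitely generated projective $R$-module $T$ lifting $M/N$ (possible by the standing hypothesis that projective $R$-modules lift to projective $A$-modules), lift the projection $M\to M/N$ to a homomorphism $T\to M$, and then build $L$. The first task is to get $L$: I would set $L=\ker(M\to M/N)/(\text{something})$ — more precisely, observe from the exact sequence \eqref{Ab4term1e} that $N\to M$ has image exactly $\ker(M\to M/N)$, so $N\to M$ need not be injective, but its kernel is the image of $\epsilon'$, namely $\mathfrak u\otimes_A(M/N)$. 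So I would instead take $L$ to be a lift of $N/\mathrm{im}(\epsilon)$ to a projective module, exactly as in Proposition~\ref{Pr-norm-dec}: note (iii) gives us that $N/\mathrm{im}(\epsilon')$ — call this $\bar L$ — fits in $0\to\bar L\to M\to M/N\to 0$ (this is the exactness at $N$ and $M$), whence $\bar L$ is the kernel of a surjection of projective modules onto a projective module, so $\bar L$ is finitely generated projective over $A$ and the sequence splits. Then $M\cong T'\oplus\bar L$ where $T'$ is a lift of $M/N$ (or I can just take $T'=T$ my chosen lift and adjust the splitting).

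**Assembling the normal decomposition.** Having $M\cong T\oplus L$ with $L\hookrightarrow M$ via $\iota$ composed with a section, I then need to exhibit the companion isomorphism $N\cong\mathfrak c\otimes_R T\oplus L$ compatible with $\iota$ and $\epsilon$. I would follow Proposition~\ref{Pr-norm-dec} verbatim: form the natural map $\nu:\mathfrak c\otimes_R T\oplus L\to N$ induced by $\epsilon$ on the first factor and the chosen lift $L\to N$ on the second; surjectivity follows because $\epsilon|_{\mathfrak c\otimes L}$ is the multiplication map (by the $(A,\mathfrak c)$-module axioms) so $\mathrm{im}(\nu)\supseteq\mathrm{im}(\epsilon)$ and $\nu$ is onto $N/\mathrm{im}(\epsilon)$ by construction. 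For injectivity I would tensor with $R$ (killing the $\mathfrak c\otimes T$ part down to $\mathfrak u\otimes_R(M/N)$... ) — actually the cleanest route is: the kernel of $\nu$ maps into $M$ trivially via $\iota\circ\nu$, which restricted to $0\oplus L$ is the injection $L\hookrightarrow M$, and on $\mathfrak c\otimes T$ equals the multiplication $\mathfrak c\otimes_R T\to T\subseteq M$; combining with exactness of \eqref{Ab4term1e} at $N$ (which identifies $\ker(N\to M)$ with $\mathfrak u\otimes_A(M/N)$) pins down the kernel and one concludes $\ker\nu=0$. The main obstacle I anticipate is precisely this injectivity argument: in Proposition~\ref{Pr-norm-dec} it was powered by the $F$-linear epimorphism $\tilde\Phi\oplus\dot\Phi$ coming from the display structure, which we do \emph{not} have here for a bare $(A,\mathfrak c)$-module — so I must replace that input with a purely homological argument using exactness of \eqref{Ab4term1e} together with the two multiplication-map axioms, likely by reducing modulo $\kappa(\mathfrak c)$ (i.e. tensoring with $R$), where $\mathfrak c\otimes_R T\to T$ becomes the map with kernel $\mathfrak u\otimes_R(M/N)$ and cokernel $M/N$, and chasing the resulting diagram. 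Once $\nu$ is an isomorphism, compatibility of $\iota,\epsilon$ with the standard maps is immediate from the construction, so $\breve M\cong\mathcal S(T,L)$ and the proof is complete.
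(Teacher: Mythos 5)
Your overall plan --- choose projective lifts $T$ of $M/N$ and $L$ of $\Coker\epsilon$, form the canonical map $\mathcal S(T,L)\to\breve M$, and show it is an isomorphism --- is the same as the paper's, but the construction of $L$ as written would fail. You announce that $L$ should lift $N/\Image\epsilon$, yet you then work with $\bar L:=N/\Image\epsilon'$, which is a different module: exactness of \eqref{Ab4term1e} at $N$ and $M$ identifies $\bar L$ with $\iota(N)=\Ker(M\to M/N)$, an $A$-submodule of $M$. Your claim that $\bar L$ is finitely generated projective over $A$ and that $0\to\bar L\to M\to M/N\to 0$ splits is false: $M/N$ is projective over $R$, not over $A$, so this is not a surjection onto a projective $A$-module. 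Indeed, in the standard model one has $\bar L\cong\kappa(\mathfrak c)T\oplus L$, which is not $A$-projective, and $M\not\cong T\oplus\bar L$. The step you need, and the one the paper actually takes, is different: since $\Image\epsilon'\subseteq\Image\epsilon$, the sequence \eqref{Ab4term1e} induces a short exact sequence of $R$-modules
\[
0\to N/\Image\epsilon\to M/\mathfrak c M\to M/N\to 0,
\]
whose middle and right terms are finitely generated projective over $R$; hence it splits over $R$, so $\Coker\epsilon=N/\Image\epsilon$ is finitely generated projective over $R$ and can be lifted to a finitely generated projective $A$-module $L$.

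Second, the injectivity of your map $\nu:\mathfrak c\otimes T\oplus L\to N$ is exactly the point you flag as ``the main obstacle'' and leave as a sketch; as written this is a hole rather than a proof, since the $F$-linear epimorphism that powered the corresponding step in Proposition \ref{Pr-norm-dec} is unavailable for a bare $(A,\mathfrak c)$-module. The paper closes this in one stroke: the canonical homomorphism $g:\mathcal S(T,L)\to\breve M$ induces a morphism between the two four-term exact sequences \eqref{Ab4term1e}, which by construction is an isomorphism on the components $\mathfrak u\otimes_A(M/N)$, $M$ and $M/N$ (for the component $M$ one uses that $T\oplus L\to M$ is an isomorphism modulo $\kappa(\mathfrak c)$ and applies Nakayama); the five lemma then forces the remaining component $\mathfrak c\otimes T\oplus L\to N$ to be an isomorphism as well. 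If you repair the construction of $L$ as above and replace your intended diagram chase by this five-lemma argument, your proof coincides with the paper's.
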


\begin{proof}
(Cf.\ \cite[Lemma 3.3]{Lau-Smoothness})
Clearly standard projective modules satisfy (i)-(iii).
Assume that (i)-(iii) hold. Since $\Image\epsilon'\subset\Image\epsilon$,
the exact sequence \eqref{Ab4term1e} implies that the following is exact:
$$
0\to N/\Image \epsilon\to M/\mathfrak c M\to M/N\to 0
$$
Thus $N/\Image\epsilon$ is a finitely generated projective $R$-module.
Let $T$ and $L$ be finitely generated projective $A$-modules which lift
$M/N$ and $N/\Image\epsilon$. 
We have a homomorphism $g:\mathcal S(T,L)\to\breve M$,
and the associated homomorphism of exact sequences \eqref{Ab4term1e}
is an isomorphism on all components, except possibly on $N$. 
By the 5-Lemma $g$ is an isomorphism.
\end{proof}

\begin{lemma}
\label{Le-std-proj-ker}
Let $0\to\breve M_1\to\breve M_2\to\breve M_3\to 0$ be a short exact
sequence of $(A,\mathfrak c)$-modules.
If $\breve M_2$ and $\breve M_3$ are standard projective, then so is $\breve M_1$.
\end{lemma}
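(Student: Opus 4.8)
The plan is to reduce the statement to the elementary fact that a direct summand of a standard projective $(A,\mathfrak c)$-module is again standard projective, and to produce such a summand decomposition from projectivity. First I would use that $\breve M_3$, being standard projective, is a projective object of the abelian category of $(A,\mathfrak c)$-modules: this is exactly the remark recorded in the discussion above, where it is noted that every $\mathcal S(T,L)$ is a projective object (even without finiteness hypotheses on $T,L$). Consequently the epimorphism $\breve M_2\to\breve M_3$ admits a section, the given short exact sequence splits, and we get an isomorphism $\breve M_2\cong\breve M_1\oplus\breve M_3$ \emph{in the category of $(A,\mathfrak c)$-modules}; in particular $\breve M_1$ is a direct summand of the standard projective module $\breve M_2$.

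Next I would verify the three conditions of Lemma \ref{Le-std-proj} for $\breve M_1=(M_1,N_1,\iota_1,\epsilon_1)$. Applying the additive functors $\breve M\mapsto M$ and $\breve M\mapsto M/N=\Coker(\iota)$ to the splitting gives $M_2=M_1\oplus M_3$ as $A$-modules and $M_2/N_2=M_1/N_1\oplus M_3/N_3$ as $R$-modules, so conditions (i) and (ii) for $\breve M_1$ follow from those for $\breve M_2$, since a direct summand of a finitely generated projective module is finitely generated projective. For (iii) I would observe that $\breve M\mapsto\bigl(0\to\mathfrak u\otimes_A(M/N)\to N\to M\to M/N\to 0\bigr)$ is an additive functor from $(A,\mathfrak c)$-modules to complexes of $A$-modules, each term being additive in $\breve M$ and the maps being natural. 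Hence the complex \eqref{Ab4term1e} attached to $\breve M_2$ is the direct sum of those attached to $\breve M_1$ and $\breve M_3$; since homology commutes with finite direct sums, exactness of the complex for $\breve M_2$ forces exactness of the complex for $\breve M_1$. Lemma \ref{Le-std-proj} then yields that $\breve M_1$ is standard projective.

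I do not expect a genuine obstacle here, this being a short corollary. The only points needing care are that the splitting must be taken in the category of $(A,\mathfrak c)$-modules, so that passing to $M$ and to $M/N$ respects the decomposition, and that condition (iii) of Lemma \ref{Le-std-proj} — an exactness statement rather than a property of a single module — has to be handled by the additivity-of-homology remark rather than by a naive ``summand of a projective'' argument. An alternative, more hands-on route would bypass projective objects and instead apply the snake lemma to the two vertical maps $\iota_i\colon N_i\to M_i$ in the short exact sequence, identifying $\ker\iota_i$ with $\mathfrak u\otimes_A(M_i/N_i)$ via the exactness of \eqref{Ab4term1e}; but the splitting argument above is shorter and cleaner.
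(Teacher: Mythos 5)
Your argument is correct, and it differs from the one in the paper. You split the sequence in the abelian category of $(A,\mathfrak c)$-modules using the fact, recorded just before Lemma \ref{Le-std-proj}, that $\mathcal S(T,L)$ is a projective object (because $\Hom(\mathcal S(T,L),\breve M)\cong\Hom_A(T,M)\oplus\Hom_A(L,N)$ and $T,L$ are projective $A$-modules, while kernels and cokernels of $(A,\mathfrak c)$-modules are computed componentwise). Once $\breve M_2\cong\breve M_1\oplus\breve M_3$, conditions (i)--(iii) of Lemma \ref{Le-std-proj} descend to the summand $\breve M_1$ by additivity of $\breve M\mapsto M$, $\breve M\mapsto M/N$ and of the four-term complex \eqref{Ab4term1e}, together with the fact that homology commutes with finite direct sums. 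You are right to insist that the splitting be taken in the category of $(A,\mathfrak c)$-modules and that (iii) be handled by additivity rather than by a naive ``summand of a projective'' slogan; with those two points made explicit there is no gap. The paper instead proceeds without splitting: it applies the snake lemma to the vertical maps $\iota_i:N_i\to M_i$, identifies $\Ker\iota_i\cong\mathfrak u\otimes_A(M_i/N_i)$ for $i=2,3$ via the exactness of \eqref{Ab4term1e}, observes that $\Ker\iota_2\to\Ker\iota_3$ is then surjective, and so extracts the short exact sequence $0\to M_1/N_1\to M_2/N_2\to M_3/N_3\to 0$ of projective $R$-modules; exactness of \eqref{Ab4term1e} for $\breve M_1$ then follows after tensoring this sequence with $\mathfrak u$. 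Your route is shorter and more conceptual, leaning on the projectivity remark; the paper's route is more elementary (only the snake lemma) and produces along the way the exact sequence of Hodge-type quotients, which is a useful piece of structural information in its own right. Either proof is acceptable.
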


\begin{proof}
We write $\breve M_i=(M_i,N_i,\iota_i,\epsilon_i)$.
Clearly $M_1$ is finitely generated projective over $A$.
Consider the commutative diagram with exact rows:
\begin{displaymath}
\begin{CD}
0 @>>> N_1 @>>> N_2 @>>> N_3 @>>> 0\\
@. @VVV  @VVV @VVV @.\\
0 @>>> M_1 @>>> M_2 @>>> M_3 @>>> 0
\end{CD}
\end{displaymath}
Applying the snake-lemma and taking into account the exact 
sequences \eqref{Ab4term1e} for $\breve{M}_2$ and $\breve{M}_3$ we obtain
an exact sequence of projective $R$-modules:
\begin{displaymath}
0 \rightarrow M_1/N_1 \rightarrow M_2/N_2 \rightarrow M_3/N_3 \rightarrow 0.
\end{displaymath}
In particular $M_1/N_1$ is finitely generated projective over $R$.
Since the last sequence remains exact under $\mathfrak u\otimes_R{}\;$,
it follows that \eqref{Ab4term1e} is exact for $\breve M_1$.
\end{proof}

\begin{prop}
\label{Abexact1p}
Let $\mathcal{P}_i = (P_i,Q_i,\iota_i, \epsilon_i, F_i, \dot{F}_i)$ for 
$i = 1,2$ be two truncated displays of level $n$ over a ring $R$.
Let $\alpha: \mathcal{P}_1 \rightarrow \mathcal{P}_2$ be a morphism 
such that $P_1 \rightarrow P_2$ and $Q_1\to Q_2$ are surjective.

Then there is a truncated display of $\mathcal{P}$ level $n$ and a sequence
of truncated displays
\begin{displaymath}
0 \rightarrow \mathcal{P} \rightarrow \mathcal{P}_1 \rightarrow \mathcal{P}_2
\rightarrow 0
\end{displaymath}
such that the underlying sequence of $(\mathcal{W}(R), I_{n+1})$-modules 
is exact. 

The same statement is true for relative truncated displays.
\end{prop}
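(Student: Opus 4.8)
The plan is to realize $\mathcal P$ as the kernel of $\alpha$ in the naive sense and to check the axioms by transporting them from $\mathcal P_1$ and $\mathcal P_2$. Concretely, set $P=\Ker(\alpha_0\colon P_1\to P_2)$ and $Q=\Ker(\alpha_1\colon Q_1\to Q_2)$, with $\iota$, $\epsilon$ the restrictions of $\iota_1$, $\epsilon_1$. Since $\alpha_0$ and $\alpha_1$ are surjective, $(P,Q,\iota,\epsilon)$ is the kernel of $(P_1,Q_1,\iota_1,\epsilon_1)\to(P_2,Q_2,\iota_2,\epsilon_2)$ in the abelian category of $(\mathcal W_n(R),I_{n+1})$-modules, so we obtain a short exact sequence
\[
0\to(P,Q,\iota,\epsilon)\to(P_1,Q_1,\iota_1,\epsilon_1)\to(P_2,Q_2,\iota_2,\epsilon_2)\to 0.
\]
The underlying $(\mathcal W_n(R),I_{n+1})$-modules of $\mathcal P_1$ and $\mathcal P_2$ are standard projective since these truncated displays possess normal decompositions (Proposition \ref{Pr-norm-dec}, Lemma \ref{Le-std-proj}), hence $(P,Q,\iota,\epsilon)$ is standard projective by Lemma \ref{Le-std-proj-ker}; in particular it admits a normal decomposition and satisfies the axioms (i), (iii), (vi) of Definition \ref{Def-tr-disp}. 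Next I would note that $F_1$ carries $P$ into $W_n(R)\otimes_{\mathcal W_n(R)}P$: the surjection $P_1\to P_2$ of projective $\mathcal W_n(R)$-modules splits, so $W_n(R)\otimes_{\mathcal W_n(R)}P$ is the kernel of $W_n(R)\otimes P_1\to W_n(R)\otimes P_2$, and $F_1(P)$ lands in that kernel because $\alpha$ commutes with $F$; likewise $\dot F_1(Q)\subseteq W_n(R)\otimes_{\mathcal W_n(R)}P$. So $F:=F_1|_P$ and $\dot F:=\dot F_1|_Q$ are well defined, and axiom (iv) of Definition \ref{Def-tr-disp} for $\mathcal P$ follows from that for $\mathcal P_1$.

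The one remaining point, axiom (v), I would verify through the matrix description. Since a standard projective object is a projective object in the category of $(\mathcal W_n(R),I_{n+1})$-modules, the exact sequence above splits, and we may fix an isomorphism $(P_1,Q_1,\iota_1,\epsilon_1)\cong(P,Q,\iota,\epsilon)\oplus(P_2,Q_2,\iota_2,\epsilon_2)$ under which $\alpha$ becomes the projection. Because $F_1(P)\subseteq W_n(R)\otimes P$ while the compatibility of $\alpha$ with $F$ forces the $(P_2,P_2)$-block of $F_1$ to be $F_2$, the map $F_1$ is block upper triangular with diagonal blocks $F$ and $F_2$; the same holds for $\dot F_1$. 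Choosing normal decompositions $P=T\oplus L$, $P_2=T_2\oplus L_2$ and taking their direct sum yields a normal decomposition $P_1=(T\oplus T_2)\oplus(L\oplus L_2)$ with respect to which the defining matrix of $\mathcal P_1$ has the shape $\left(\begin{smallmatrix}\mathfrak F&*\\0&\mathfrak F_2\end{smallmatrix}\right)$, where $\mathfrak F$ is the matrix $(A,B,C,D)$ produced by the data $(T,L,F,\dot F)$ and $\mathfrak F_2$ is the defining matrix of $\mathcal P_2$. Now $\mathfrak F_2$ is invertible (as $\mathcal P_2$ is a truncated display) and $\left(\begin{smallmatrix}\mathfrak F&*\\0&\mathfrak F_2\end{smallmatrix}\right)$ is invertible (as $\mathcal P_1$ is), so $\mathfrak F$ is invertible as well. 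Hence $\mathcal S(T,L;A,B,C,D)$ is a standard truncated display of level $n$; unwinding the formulas \eqref{AbSt1e} and \eqref{AbSt2e}, using axiom (iv) and the identity $F(\iota y)=p\dot F(y)$, shows that its $(F,\dot F)$ coincide with the restrictions above, so $\mathcal P$ is a truncated display of level $n$. The inclusions $P\hookrightarrow P_1$, $Q\hookrightarrow Q_1$ then constitute a morphism $\mathcal P\to\mathcal P_1$ in $\mathcal D_n(R)$ with zero composite into $\mathcal P_2$, and the induced sequence of $(\mathcal W_n(R),I_{n+1})$-modules is the exact sequence displayed above.

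For relative truncated displays the argument is identical after replacing the pair $(\mathcal W_n(R),I_{n+1})$ by $(\mathcal W_n(S),\mathcal J_{n+1})$: Lemmas \ref{Le-std-proj} and \ref{Le-std-proj-ker} are stated for an arbitrary $(A,\mathfrak c)$, relative truncated displays again admit normal decompositions and are classified by invertible matrices over $W_n(S)$ as in Section 2, and the block-triangular computation is unchanged. I expect axiom (v) to be the only delicate step: surjectivity of $\dot F_1^\sharp$ does not by itself force surjectivity of the corresponding linearized map for $\mathcal P$, so one really has to pass through the normal-decomposition/matrix picture and exploit \emph{invertibility} of the full matrix of $\mathcal P_1$ together with the compatibility of the splitting with the chosen normal decompositions.
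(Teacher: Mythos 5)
Your proposal is correct and follows essentially the same route as the paper's proof: take the kernel componentwise, apply Lemma \ref{Le-std-proj-ker} to get a standard projective $(A,\mathfrak c)$-module, split the sequence, and deduce axiom (v) from the fact that the matrix of $\mathcal P_1$ is block upper triangular with diagonal blocks the matrices of $\mathcal P$ and $\mathcal P_2$, so invertibility of the former forces invertibility of the latter. You merely spell out more of the routine verifications (restriction of $F_1,\dot F_1$, axiom (iv)) that the paper leaves implicit.
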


Remark: One can also show that surjectivity of $P_1\to P_2$ implies
surjectivity of $Q_1\to Q_2$. 

\begin{proof}
We consider the case of relative truncated displays.
Let $\PPP=(P,Q,\iota,\epsilon,F,\dot F)$ be the kernel of $\alpha$,
taken componentwise. Then $(P,Q,\iota,\epsilon)$ is a standard
projective $(\mathcal W_n(S),\mathcal J_{n+1})$-module by Lemma \ref{Le-std-proj-ker}.
The underlying sequence of $(\mathcal W_n(S),\mathcal J_{n+1})$-modules
splits. Now $\PPP$ is a relative truncated display iff the operator
$F^\sharp\oplus\dot F^\sharp$ of \eqref{Ab4e} is an isomorphism.
Since a block upper triangular matrix is invertible iff the
diagonal blocks are invertible, the fact that $\PPP_1$ is a relative
truncated display implies the same for $\PPP$.
\end{proof}

\begin{cor}
\label{Co-Phi-ex}
The functor $\Phi_m$ of Proposition \ref{AbLF1p} is exact.
\end{cor}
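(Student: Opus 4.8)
The plan is to deduce exactness of $\Phi_m$ from exactness of the relative display functor $\Phi_{S/R}$ on honest $p$-divisible groups, using the derived construction of $\Phi_m$ together with a rank count that controls the passage to level $m$. Fix a short exact sequence $0\to G_1\to G_2\to G_3\to 0$ in $\BBB\TTT_n(R)$; it is a short exact sequence in the exact category $\GGG$ of finite locally free group schemes embeddable in a $p$-divisible group. Exactness of a sequence of truncated relative displays is tested on the underlying $(\mathcal W_m(S),\JJJ_{m+1})$-modules and is therefore local for the Zariski topology on $\Spec R$; since a truncated $p$-divisible group embeds Zariski-locally into a $p$-divisible group, we may assume that each $G_i$ admits such an embedding.

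Recall that $\Phi_{S/R}$ extends to an exact functor $\GGG\to\TTT(S/R)$ by the same $H^1$-of-a-two-term-resolution recipe used to build $\Phi_m$, where $\TTT(S/R)$ denotes the evident category of data $(P,Q,\iota,\epsilon,F,\dot F)$ containing every $\Phi_m$-image, and $\Phi_m$ is obtained from it by reduction to level $m$. The given short exact sequence gives a distinguished triangle on $G_1,G_2,G_3$ in $D^\flat(\BBB\TTT(R))$ (compute the cone of $G_1\to G_2$ on two-term complexes), and applying the exact functor $\Phi_{S/R}$ produces a distinguished triangle on $\Phi_{S/R}(G_1),\Phi_{S/R}(G_2),\Phi_{S/R}(G_3)$ in $D^\flat(\mathcal D(S/R))$. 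As an isogeny induces a monomorphism of displays, the two-term complex computing each $\Phi_{S/R}(G_i)$ has vanishing $H^0$, so the long exact cohomology sequence of the triangle degenerates to a short exact sequence $0\to\Phi_{S/R}(G_1)\to\Phi_{S/R}(G_2)\to\Phi_{S/R}(G_3)\to 0$ in $\TTT(S/R)$ (alternatively one builds an explicit horseshoe of two-term $p$-divisible-group resolutions with exact columns from an embedding $G_2\hookrightarrow X$ and the resolutions formed from $X$, $X/G_1$ and $X/G_2$). Reduction to level $m$ is right exact, so $\Phi_m(G_1)\to\Phi_m(G_2)\to\Phi_m(G_3)\to 0$ is exact.

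It remains to show that $\Phi_m(G_1)\to\Phi_m(G_2)$ is injective. By Proposition \ref{AbLF1p} each $\Phi_m(G_i)$ is a genuine truncated display of level $m$: $P_i:=P(\Phi_m(G_i))$ is finitely generated projective over $\mathcal W_m(S)$ of rank the height of $G_i$, and the Hodge filtration quotient of $\Phi_m(G_i)$ is a finitely generated projective $R$-module of rank the dimension of $G_i$; in a short exact sequence of truncated $p$-divisible groups both heights and dimensions add. Now $P_1\to P_2\to P_3\to 0$ is right exact with projective target, so $P_3$ splits off and $\Ker(P_2\to P_3)$ is projective of rank $\operatorname{rk} P_1$; the induced surjection of $P_1$ onto this kernel is then a surjective homomorphism of projective $\mathcal W_m(S)$-modules of equal rank, hence — Zariski-locally a surjective endomorphism of a free module, therefore an automorphism — an isomorphism. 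Thus $P_1\to P_2$ is injective. The same argument applied to the Hodge filtration quotients, together with Proposition \ref{Abexact1p} (which realizes $\Ker(\Phi_m(G_2)\to\Phi_m(G_3))$ as a truncated display once surjectivity on $P$ and $Q$ is available) and a short diagram chase, yields exactness of the full sequence of truncated displays.

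The step I expect to be hardest is the reduction to level $m$: a priori reduction of level is not exact on $\TTT(S/R)$, and it is precisely the numerical hypothesis $p^n\mathcal W_m(S)=0$ of Proposition \ref{AbLF1p} — which is exactly what forces the $\Phi_m(G_i)$ to be honest truncated displays and thereby makes the rank count above available — that salvages exactness for the sequences arising here. One must also verify carefully that $\Phi_{S/R}$ genuinely sends the isogenies occurring in the resolutions to monomorphisms of displays, so that the degree-$0$ cohomology vanishes and the long exact sequence degenerates as claimed.
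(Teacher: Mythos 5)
Your overall architecture --- embed the sequence Zariski-locally into a short exact sequence of $p$-divisible groups, compare the two-term resolutions via a distinguished triangle, and use additivity of heights to force injectivity at the first spot --- is essentially the paper's proof. But the one step you rely on to get the middle exactness is false: an isogeny of $p$-divisible groups does \emph{not} in general induce a monomorphism of (relative) displays, so the degree-zero cohomology of the complexes $[\Phi_{S/R}(X_0)\to\Phi_{S/R}(X_1)]$ need not vanish. Indeed $\Phi_{S/R}$ is additive, so multiplication by $p$ on $\mu_{p^\infty}$ induces multiplication by $p$ on $P=W(S)$; if $S$ has nilpotent elements then $W(S)$ has $p$-torsion (for instance $p\cdot[\epsilon]={}^V({}^F[\epsilon])={}^V([\epsilon^p])=0$ in $W(\FF_p[\epsilon]/\epsilon^2)$), so this map is not injective. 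A determinant argument (the determinant of an isogeny divides a power of $p$) only gives injectivity when $p$ is a nonzerodivisor in $W(S)$, which fails in exactly the cases the corollary must cover; note also that Remark \ref{Rk-dual} identifies the kernel at level $m$ with $\Phi_m(G)$ itself, which is nonzero. Consequently your short exact sequence $0\to\Phi_{S/R}(G_1)\to\Phi_{S/R}(G_2)\to\Phi_{S/R}(G_3)\to 0$ at the infinite level is not established, and with it the right-exactness of the truncated sequence that your rank count takes as input. (You flagged this verification yourself; it does fail.)

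The repair is what the paper does: keep the $H^0$-terms instead of killing them. Writing $\MMM_i$ for the kernel of $\tau_m\Phi_{S/R}(X_i)\to\tau_m\Phi_{S/R}(Y_i)$ with $Y_i=X_i/G_i$ (a relative truncated display by Proposition \ref{Abexact1p}), the long exact sequence of your triangle, equivalently the snake lemma applied to the four-term defining sequences, gives a six-term exact sequence $0\to\MMM_1\to\MMM_2\to\MMM_3\to\Phi_m(G_1)\to\Phi_m(G_2)\to\Phi_m(G_3)\to 0$. This already yields exactness at the second and third spots with no $H^0$-vanishing needed. Your rank count --- which is exactly the paper's: $P_i$ has rank equal to the height of $G_i$, heights are additive, so the image of $M_3\to P_1$ is a projective module of rank zero, hence zero --- then kills the connecting map; and by Lemma \ref{Le-faithful-rel} a morphism of relative truncated displays vanishing on $P$ is zero, so the separate argument on $Q$ and on the Hodge quotients is unnecessary. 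In short, the key mechanism is present in your write-up and matches the paper; the flaw is the monomorphism claim, and the clean fix is to obtain the middle exactness from the six-term sequence at level $m$ rather than from a short exact sequence at the infinite level.
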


\begin{proof}
A given short exact sequence $0\to G_1\to G_2\to G_3\to 0$ in $\BBB\TTT_n(R)$ embeds
Zariski locally into a short exact sequence of $p$-divisible groups $0\to X_1\to X_2\to X_3\to 0$.
Let $Y_i=X_i/G_i$. 
We have exact sequences in $\DDD_m(S/R)$ which define $\MMM_i$:
$$
0\to\MMM_i\to\tau_m\Phi_{S/R}(X_i)\to\tau_m\Phi_{S/R}(Y_i)\to\Phi_m(G_i)\to 0.
$$
Here $\tau_m$ means truncation to level $m$.
Indeed, the sequence without $\MMM_i$ is exact by definition.
Proposition \ref{Abexact1p} implies that the image and kernel 
of the middle arrow are relative truncated displays. 

By the snake lemma we obtain an exact sequence in $\DDD_m(S/R)$
$$
0\to\MMM_1\to\MMM_2\to\MMM_3\to\Phi_m(G_1)\to\Phi_m(G_2)\to\Phi_m(G_3)\to 0.
$$
Let $\Phi_m(G_i)=(P_i,\ldots)$. The rank of $P_i$ is the height of $G_i$.
Since the height is additive in short exact sequences,
it follows that $M_3\to P_1$ is the zero map. By Lemma \ref{Le-faithful-rel}
it follows that $\MMM_3\to\PPP_1$ is zero.
\end{proof}

\begin{remark}(Duality)
\label{Rk-dual}
Let $G$ be a truncated $p$-divisible group of level $n$ over $R$. 
Assume that $G$ is the kernel of an isogeny of $p$-divisible groups: 
$0 \rightarrow G \rightarrow X_0 \rightarrow X_1 \rightarrow 0$. 
Let $\alpha:\mathcal{P}_0 \rightarrow \mathcal{P}_1$ be the morphism of relative 
truncated displays of level $m$, given by the functor $\Phi_{S/R}$ followed by truncation. 
By construction, $\Phi_m(G)=\Coker\alpha$.
We claim that there is a natural isomorphism
$$
\Phi_m(G)\cong\Ker(\alpha),
$$
i.e.\ one could also define $\Phi_m$ using the kernel.
First we note that the kernel is a relative  truncated display
by Proposition \ref{Abexact1p}. Now we have an exact
sequence in $\BBB\TTT_n(R)$
$$
0\to G\to X_0(n)\to X_1(n) \to G\to 0.
$$
Since $\Phi_m$ is exact and since $\Phi_m(X_i(m))=\PPP_i$ the claim follows.

One can define the dual of (relative) truncated displays as in the case of
(relative) displays. The functor $\Phi_{S/R}$ preserves duality.
Using the above isomorphism one can deduce that the functor $\Phi_m$
preserves duality too. 
We leave the details to the reader.
\end{remark}

\subsection{Smoothness}
\label{Se:Smoothness}

The functors $\Phi_n$ over rings $R$ with $pR=0$ define a morphism
\[
\phi_n:\BBB\TTT_n\times\Spec\FF_p\to\DDD_n\times\Spec\FF_p
\]
of smooth algebraic stacks over $\FF_p$.
By \cite[Theorem 4.5]{Lau-Smoothness} this morphism is smooth.
Using Proposition \ref{Pr-lift-disp} we can simplify the proof.
This remark is independent of the notion of relative truncated displays.
Let $k$ be a field of characteristic $p$. We consider the ring
homomorphism $S=k[\varepsilon]\to R=k$.
To prove that $\phi_n$ is smooth it suffices to show that the 
morphism of fpqc stacks
\[
\phi:\BBB\TTT\to\DDD
\]
from $p$-divisible groups to displays satisfies the lifting criterion
of formal smoothness with respect to $S\to R$. We equip the
kernel of $S\to R$ with the trivial divided powers. We consider
the commutative diagram of functors:
\[
\xymatrix@M+0.2em{
\BBB\TTT(S) \ar[r]^-f \ar[d]_{\Phi_S} &
\BBB\TTT(R) \ar@{=}[r] \ar[d]^{\Phi_{S/R}} &
\BBB\TTT(R) \ar[d]^{\Phi_R} \\
\DDD(S) \ar[r]^-g &
\DDD(S/R) \ar[r]^-h &
\DDD(R) 
}
\]
Here the left hand square is 2-Cartesian because lifts under $f$ or
under $g$ correspond to lifts of the Hodge filtration. For $f$
this is the Grothendieck-Messing theorem, and for $g$ this is trivial.
Proposition \ref{Pr-lift-disp} (a) implies that for each display over $R$
all lifts under $h$ are isomorphic. The lifting criterion for $S\to R$
follows easily.

\subsection{From displays to groups } 
 
Let $R$ be a ring with $pR=0$.
We will view formal groups and group schemes with a nilpotent augmentation 
ideal as functors on the category $\Nil_{R}$ of nilpotent $R$-algebras. 
We will call such group schemes infinitesimal. 

Let $G$ be a functor on $\Nil_R$. We recall the definition of the Frobenius of $G$.
For $N \in \Nil_R$ we have the absolute Frobenius 
$\Frob^{n}: N \rightarrow N_{[p^{n}]}$. This induces a homomorphism

\begin{equation*}
\Frob_{G}^{n}: G(N) \rightarrow G^{(p^{n})}(N) = G(N_{(p^{n})})
\end{equation*}
which is called the Frobenius of $G$.
We denote by $G[F^n]$ the kernel of $\Frob_{G}^{n}$. Let $N'$ be the kernel of 
$\Frob^{n}: N \rightarrow N_{[p^{n}]}$. 
If $G$ is left exact we have $$G(N') =  G[F^{n}](N) = G[F^{n}](N').$$ 
Let $\Nil_R^{(n)}\subset\Nil_R$ be the category of $R$-algebras $N$ such that 
$x^{p^n}=0$ for all $x\in N$. For a left exact functor $G$ on $\Nil_R$
we can view $G[F^{(n)}]$ 
as the restriction of $G$ to the category $\Nil_R^{(n)}$.

If $G$ is a commutative formal group of dimension $d$ then $G[F^n]$ is a 
finite locally free infinitesimal group scheme of rank $p^{dn}$ over $R$. 
Finite locally free infinitesimal group schemes which arise in this way are 
called truncated formal groups of level $n$ over $R$. 
Let $\FFF\GGG_n(R)$ be the category of such group schemes.

Let $\mathcal{P} =(P,Q,\iota,\epsilon,F,\dot F)$ 
be a truncated display of level $n$ over $R$. 
We chose a normal decomposition 
\[
P=T\oplus L,\qquad Q=I_{n+1}\otimes T\oplus L.
\]
Let $N\in\Nil_R^{(n)}$.
Then the $W(R)$-module $W(N)$ is a $W_n(R)$-module
since for $x\in W(N)$ and $a\in W(R)$
we have $^{V^n}a\cdot x={}^{V^n}(aF^n(x))=0$.
Thus we can define 
\[
\hat P_N=\hat W(N)\otimes_{W_n(R)}P,\qquad
\hat Q_N= ~^V\hat W(N)\otimes_{W_n(R)}T\oplus\hat W(N)\otimes_{W_n(R)}L.
\]

\begin{prop}
There is an exact sequence of abelian groups
\[
0\longrightarrow \hat Q_N\xrightarrow{\dot F-1}\hat P_N
\longrightarrow FG_n(\mathcal{P})(N)\longrightarrow 0,
\]
which defines $FG_n(\mathcal{P})(N)$; the assertion is that the first map
is injective.
The functor $N\mapsto FG_n(\mathcal{P})(N)$ on the category $\Nil_R^{(n)}$
is a truncated formal group of level $n$. 
This defines an additive and exact functor
\[
FG_n:\DDD_n(R)\to\FFF\GGG_n(R)
\]
for each ring $R$ with $pR=0$.
\end{prop}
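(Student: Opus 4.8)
The plan is to mimic the construction of the functor $BT$ from nilpotent displays in \cite{Zink-DFG}, taking into account the truncation and dropping the nilpotence hypothesis. First I would use Proposition~\ref{Pr-std-rep} and Lemma~\ref{Le-W-Zar} to reduce to the case of a standard truncated display $\mathcal{P}=\mathcal{S}(T,L;A,B,C,D)$ with $T$ and $L$ free, so that $\iota$, $\dot F$ and the map ``$1$'' in the statement are all given by the explicit Witt matrix formulas \eqref{AbSt1e}, \eqref{AbSt2e}; here ``$1$'' is the sum of the inclusion ${}^V\hat W(N)\otimes T\hookrightarrow\hat W(N)\otimes T$ (through $\kappa$) and the identity on the $L$-part. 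I would record at the outset that, because $N\in\Nil_R^{(n)}$, the $W(R)$-module $\hat W(N)$ is indeed a $W_n(R)$-module, that Frobenius is the coordinatewise $p$-power map on it with $F^n=0$, and that $V$ is ``topologically nilpotent'' in the sense that ${}^{V^k}x$ has zero components in degrees ${}<k$.

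Injectivity of $\dot F-1$ --- the point singled out in the statement --- I would prove by d\'evissage along the filtration of $N$ by powers $N\supseteq N^2\supseteq\cdots$: a square-zero extension $N'\to N$ with kernel $K$ yields short exact sequences $0\to(\,\cdot\,)(K)\to(\,\cdot\,)(N')\to(\,\cdot\,)(N)\to0$ for each of $\hat Q$ and $\hat P$, compatibly with $\dot F-1$, so one reduces to $N^2=0$. There the Witt operations on $\hat W(N)$ are coordinatewise and $F=0$, so an element of $\Ker(\dot F-1)$ satisfies $A\underline\eta={}^V\underline\eta$ and $C\underline\eta=\underline\xi$, and the first relation forces $\underline\eta=0$ because ${}^V\underline\eta$ has no bottom component and the components may be read off recursively; hence also $\underline\xi=0$. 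This is essentially the argument of \cite{Zink-DFG} in the present normalization, and I expect the careful handling of $\kappa$, $\dot\sigma$ and the Verschiebung --- rather than any new idea --- to be the main routine obstacle here.

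Granting injectivity, $FG_n(\mathcal{P})$ is the cokernel presheaf on $\Nil_R^{(n)}$, and it remains to see that it lies in $\FFF\GGG_n(R)$. For this I would lift $\mathcal{P}$ to a display $\tilde{\mathcal{P}}$ over $R$: the kernel of $W(R)\to W_n(R)$ lies in the radical, so $\GL_h(W(R))\to\GL_h(W_n(R))$ is surjective, and a standard truncated display is precisely such a matrix. By \cite{Zink-DFG}, $\tilde{\mathcal{P}}$ has an associated formal group $BT(\tilde{\mathcal{P}})$, realized as $\Coker(\dot F-1)$ in the same way, smooth of dimension $d$ (the rank of $T$), and $p$-divisible exactly when $\tilde{\mathcal{P}}$ is nilpotent --- which we do not need. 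For $N\in\Nil_R^{(n)}$ the data defining $FG_n(\mathcal{P})(N)$ coincide with those defining $BT(\tilde{\mathcal{P}})(N)$, since there $\hat W(N)$ is a module over $W_n(R)$ and the matrix of $\tilde{\mathcal{P}}$ reduces to that of $\mathcal{P}$; hence $FG_n(\mathcal{P})=BT(\tilde{\mathcal{P}})[F^n]$ as functors on $\Nil_R^{(n)}$. Since $G[F^n]$ is finite locally free of rank $p^{dn}$ for any smooth formal group $G$ of dimension $d$, this presents $FG_n(\mathcal{P})$ as a truncated formal group of level $n$, independent of the chosen lift because the left-hand side is built from $\mathcal{P}$ and $N$ alone. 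The one substantive point is that one may invoke \cite{Zink-DFG} for $\tilde{\mathcal{P}}$ non-nilpotent; if this is not available off the shelf, only the formal-smoothness half of that argument needs to be re-run, not $p$-divisibility.

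Finally, functoriality of $\mathcal{P}\mapsto FG_n(\mathcal{P})$ and base change are immediate from the cokernel construction; additivity holds because $\hat Q$, $\hat P$ and $\dot F-1$ are additive in $\mathcal{P}$ (block-diagonal matrices); and exactness follows by the snake lemma applied to the two-term complexes $[\hat Q_{i,N}\to\hat P_{i,N}]$ attached to a short exact sequence $0\to\mathcal{P}_1\to\mathcal{P}_2\to\mathcal{P}_3\to0$ of truncated displays. By Proposition~\ref{Abexact1p} the underlying sequences of $\mathcal{W}_n(R)$-modules and of $I_{n+1}$-modules split, so the three complexes form a short exact sequence, and the connecting maps in the snake sequence land in the kernels of $\dot F-1$, which vanish by the injectivity already proved; this yields the short exact sequence $0\to FG_n(\mathcal{P}_1)\to FG_n(\mathcal{P}_2)\to FG_n(\mathcal{P}_3)\to0$.
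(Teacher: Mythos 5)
Your proposal is correct and its central step coincides with the paper's proof: lift $\mathcal{P}$ to a (not necessarily nilpotent) display $\tilde{\mathcal{P}}$ over $R$ and identify the defining sequence of $FG_n(\mathcal{P})(N)$ for $N\in\Nil_R^{(n)}$ with that of $BT(\tilde{\mathcal{P}})(N)$, so that $FG_n(\mathcal{P})=BT(\tilde{\mathcal{P}})[F^n]$ is a truncated formal group of level $n$. The extra material you supply --- the d\'evissage proof of injectivity of $\dot F-1$ (which the paper instead reads off from the exact sequence for $BT(\tilde{\mathcal{P}})$) and the snake-lemma argument for exactness via Proposition~\ref{Abexact1p} --- is sound and only makes explicit what the paper leaves implicit.
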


\begin{proof}
Let $\mathcal{P}'=(P',Q',F,\dot F)$ be a display over $R$ with truncation  
$\mathcal{P}$ and let
$G=BT(\mathcal{P}')$ be the associated formal Lie group. 
By definition, for each $N\in\Nil_R$ we have an exact sequence
$$0\to \hat Q'_N\xrightarrow{\dot F-1}\hat P'_N\to G(N)\to 0.$$
If $N$ lies in $\Nil_R^{(n)}$, this sequence can be identified with
the sequence of the proposition. Thus that sequence is left exact, and
$FG_n(\mathcal{P})=G[F^n]$ is a truncated formal group of level $n$.
\end{proof}

\begin{lemma}
\label{Le-FG-Phi}
Let $G$ be a truncated $p$-divisible group of level $n$ over a ring $R$ such that
$pR = 0$.  There is a natural isomorphism
\begin{equation}\label{D2GII2e}
\text{FG}_n (\Phi_n(G)) \xrightarrow\sim G[F^{n}].
\end{equation} 
\end{lemma}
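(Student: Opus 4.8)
The plan is to prove the isomorphism first when $G$ is the $p^n$-torsion of an honest $p$-divisible group, and then to reduce the general case to this one by means of a four-term exact sequence together with Zariski descent. Throughout I use that $\Phi_n$, $\mathrm{FG}_n$, the formation of Frobenius kernels, and the auxiliary constructions below are all compatible with localization on $\Spec R$, that $\mathrm{FG}_n$ is exact (and in particular preserves kernels), and that $G\mapsto G[F^n]$ is left exact, being a kernel functor.

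\emph{Step 1: the case $G=X(n)$.} Let $X$ be a $p$-divisible group over $R$. Since $pR=0$ we have $p^nW_n(R)=0$, so running the computation in the proof of Proposition \ref{AbLF1p} with the resolution $0\to X(n)\to X\xrightarrow{p^n}X$ shows that $\Phi_n(X(n))$ is the level-$n$ truncation of the display $\Phi_R(X)$ of $X$. Using $\Phi_R(X)$ as the display lifting $\Phi_n(X(n))$ in the construction of $\mathrm{FG}_n$ recalled before the lemma, we obtain a natural isomorphism $\mathrm{FG}_n(\Phi_n(X(n)))\cong BT(\Phi_R(X))[F^n]$. Now $BT(\Phi_R(X))$ is the formal completion $\hat X$ of $X$ along the zero section, by the compatibility of the display functor with $BT$ (cf.\ \cite{Zink-DFG}, \cite{Lau-Display}). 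As $X[F^n]$ is infinitesimal it is contained in the connected part of $X[p^n]$, which coincides with the $p^n$-torsion of $\hat X$; hence $BT(\Phi_R(X))[F^n]=\hat X[F^n]=X[F^n]=X(n)[F^n]$. All of these identifications are functorial in $X$, in particular with respect to morphisms of $p$-divisible groups and their truncations.

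\emph{Step 2: the general case.} A truncated $p$-divisible group $G$ of level $n$ embeds, Zariski-locally on $\Spec R$, into a $p$-divisible group $X_0$; then $X_1=X_0/G$ is again $p$-divisible, and since $G\subseteq X_0[p^n]$, multiplication by $p^n$ yields via the snake lemma an exact sequence in $\BBB\TTT_n(R)$
\[
0\longrightarrow G\longrightarrow X_0(n)\xrightarrow{\ \alpha\ }X_1(n)\longrightarrow G\longrightarrow 0,
\]
where $\alpha$ is the restriction of $X_0\to X_1$ to $p^n$-torsion; this is the sequence of Remark \ref{Rk-dual}, which moreover gives $\Phi_n(G)\cong\Ker(\Phi_n(\alpha))$. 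Applying $\mathrm{FG}_n\circ\Phi_n$ and then the functorial isomorphisms of Step 1 identifies $\mathrm{FG}_n(\Phi_n(G))$ with $\Ker\bigl(X_0(n)[F^n]\to X_1(n)[F^n]\bigr)$, while the left exactness of $G\mapsto G[F^n]$ applied to $0\to G\to X_0(n)\xrightarrow{\alpha}X_1(n)$ identifies this kernel with $G[F^n]$. This produces a canonical isomorphism $\mathrm{FG}_n(\Phi_n(G))\cong G[F^n]$. It is independent of the chosen local embedding, since any two embeddings are dominated by a third, and as all ingredients commute with localization the local isomorphisms glue to the asserted natural isomorphism over $R$.

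The main obstacle is Step 1: one must be certain of the identification $BT(\Phi_R(X))\cong\hat X$ and, more delicately, verify that the resulting comparison isomorphism $\mathrm{FG}_n(\Phi_n(X(n)))\cong X(n)[F^n]$ is natural with respect to morphisms induced by isogenies, so that it can be transported along the four-term exact sequence of Step 2. The gluing step is routine; alternatively one may avoid any sheafification issue by reducing first to noetherian $R$ by a limit argument and then, via fpqc descent, to complete local rings with perfect residue field, over which $G$ is globally of the form $X(n)$ by \cite{Illusie-BT}, exactly as in the proof of Proposition \ref{AbLF1p}.
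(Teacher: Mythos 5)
Your proposal is correct and follows essentially the same route as the paper: a Zariski-local resolution $0\to G\to X_0\to X_1\to 0$, the resulting four-term exact sequence in $\BBB\TTT_n(R)$, exactness of $\Phi_n$ and $FG_n$, the comparison $BT(\Phi(X))\cong\hat X$ (which the paper cites as \cite[Theorem 8.3]{Lau-Smoothness} and likewise flags as the essential input), and left exactness of $[F^n]$ to identify the kernel with $G[F^n]$. Your Step 1 merely makes explicit the identification $\Phi_n(X(n))=\Phi_R(X)(n)$ and the compatibility of $FG_n$ with truncation of displays, which the paper uses implicitly.
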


\begin{proof}
Cf.\ Remark \ref{Rk-dual}.
Assume that $G$ is the kernel of an isogeny of $p$-divisible groups,
$0\to G\to X_0\to X_1\to 0$. We obtain an exact sequence
$$
0\to G\to X_0(n)\to X_1(n)\to G\to 0.
$$
Since the functors $\Phi_n$ and $FG_n$ preserve short
exact sequences (Corollary \ref{Co-Phi-ex}) and since  
$\Phi_n(X_i(n))=\Phi_R(X_i)(n)$, we obtain an exact sequence
of finite group schemes
$$
0\to FG_n(\Phi_n(G))\to BT(\Phi(X_0))[F^n]\to BT(\Phi(X_1))[F^n]
$$
By \cite[Theorem 8.3]{Lau-Smoothness} for each $p$-divisible group $X$ over $R$
there is a natural isomorphism
\begin{equation}
\label{Eq-BT-Phi}
BT(\Phi(X))\cong \hat X.
\end{equation}
 This gives an isomorphism \eqref{D2GII2e}.
The isomorphism does not depend on the chosen resolution $X_0\to X_1$ of $G$.
Since such resolutions exist Zariski locally, the lemma follows.
\end{proof}

For a truncated display $\PPP$ of level $n$ over $R$ and a natural number $m$ we define
a finite group scheme over $R$:
\begin{equation}\label{D2GII5e}
BT_m(\mathcal{P})=FG_n(\mathcal{P})[p^m]
\end{equation}

\begin{lemma}\label{Le:BTm}
Let $R$ a ring with $pR = 0$. Let $\mathcal{P}$ be 
a truncated display of level $n$ over $R$ such that the order of nilpotence of 
$\mathcal{P}$ is $\le e$. Let $m$ be a positive integer such that $n\ge m(e+1)$. 
Then the group scheme $BT_m(\mathcal{P})$ is a truncated $p$-divisible
group of  level $m$. 
\end{lemma}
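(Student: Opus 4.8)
The plan is to reduce $\mathcal P$ to an honest (non-truncated) display over $R$, apply the functor $BT$ of \cite{Zink-DFG}, and identify $BT_m(\mathcal P)$ with the $p^m$-torsion of the resulting formal $p$-divisible group. Since being a truncated $p$-divisible group of level $m$ is Zariski-local on $\Spec R$ and since $FG_n$ commutes with base change, I would first use Lemma \ref{Le-W-Zar} and Proposition \ref{Pr-norm-dec} to replace $R$ by the members of a Zariski cover and assume that $\mathcal P$ is the standard truncated display attached to an invertible matrix $\mathfrak F \in \GL_h(W_n(R))$.

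Next I would lift $\mathfrak F$ to an invertible matrix over $W(R)$. This is possible because $pR = 0$ forces $pW(R) \subset {}^VW(R)$, and it follows that the ideals $I_k = {}^{V^k}W(R)$ satisfy $I_kI_l \subset I_{k+l}$; as $W(R)$ is complete and separated for this filtration with $W(R)/I_k = W_k(R)$, the reduction map $\GL_h(W(R)) \to \GL_h(W_n(R))$ is surjective. A lift of $\mathfrak F$ defines a display $\mathcal P'$ over $R$ with $\mathcal P'(n) \cong \mathcal P$. The nilpotence condition of Lemma \ref{Le-nilpotent} involves only the image of the defining matrix in $W_1(R)$, and $\mathcal P'(1) = \mathcal P(1)$; hence $\mathcal P'$ is again nilpotent of order $\le e$. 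Thus $X := BT(\mathcal P')$ is a formal $p$-divisible group over $R$, nilpotent of order $\le e$ by Lemma \ref{Le:nilpotence}, and the construction of $FG_n$ (the Proposition defining $FG_n$, just before Lemma \ref{Le-FG-Phi}) applied to $\mathcal P = \mathcal P'(n)$ gives a canonical isomorphism $FG_n(\mathcal P) \cong X[F^n]$. Consequently
\[
BT_m(\mathcal P) = FG_n(\mathcal P)[p^m] \cong X[F^n] \cap X[p^m]
\]
inside $X$.

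It then remains to show $X[p^m] \subset X[F^n]$; for then $BT_m(\mathcal P) \cong X[p^m] = X(m)$, which is a truncated $p$-divisible group of level $m$, as desired. Since $X$ is nilpotent of order $\le e$, the Frobenius $F^{e+1}$ annihilates $X(1) = X[p]$. From the short exact sequence $0 \to X[p] \to X[p^m] \xrightarrow{p} X[p^{m-1}] \to 0$ and the fact that $F$ commutes with multiplication by $p$, one sees that if $F^a$ annihilates $X[p^{m-1}]$ and $F^b$ annihilates $X[p]$, then $F^{a+b}$ annihilates $X[p^m]$; by induction $F^{m(e+1)}$ annihilates $X[p^m]$. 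As $n \ge m(e+1)$, this gives $F^n|_{X[p^m]} = 0$, i.e.\ $X[p^m] \subset X[F^n]$.

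The step I expect to require the most care is the lifting of $\mathcal P$ to a display over $R$ — equivalently, the surjectivity of $\GL_h(W(R)) \to \GL_h(W_n(R))$ — where the hypothesis $pR = 0$ enters essentially, through the completeness and multiplicativity of the $V$-adic filtration on $W(R)$; this lifting is in any case already implicit in the construction of $FG_n$, so one may instead simply invoke the display produced there. The numerical input $n \ge m(e+1)$ is used only in the Frobenius/$p$-power bookkeeping of the third paragraph, which is otherwise routine.
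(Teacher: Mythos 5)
Your proof is correct and follows essentially the same route as the paper: lift $\mathcal P$ to a display $\mathcal P'$ over $R$, pass to $X=BT(\mathcal P')$, and use the nilpotence order (via Lemma \ref{Le:nilpotence}) to get $X[p^m]\subseteq X[F^{m(e+1)}]\subseteq X[F^n]=FG_n(\mathcal P)$, so that $BT_m(\mathcal P)=X[p^m]=X(m)$. The only differences are that you spell out the existence of the lift (surjectivity of $\GL_h(W(R))\to\GL_h(W_n(R))$) and the induction giving $X[p^m]\subseteq X[F^{m(e+1)}]$, both of which the paper leaves implicit.
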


\begin{proof}
Let $\mathcal{P}'$ be a display over $R$ with truncation $\mathcal{P}$ and let
$G' = BT(\mathcal{P}')$ be the associated $p$-divisible formal group.
Lemma \ref{Le:nilpotence} implies that $G'[p]\subseteq G'[F^{e+1}]$
and thus $G'[p^m]\subseteq G'[F^{m(e+1)}] \subseteq G'[F^{n}]=FG_n(\mathcal P)$.
It follows that $BT_m(\PPP)=G'[p^m]$, which is a truncated $p$-divisible group.
\end{proof}

\begin{prop}\label{Pr-BT-Phi}
Let $R$ be a ring with $pR = 0$.
Let $G$ be a truncated $p$-divisible group of level $n$ such that the order of 
nilpotence of $G$ is $\leq e$ (see Definition \ref{Def:nilpotence-pdiv}). 
Let $m$ be a natural number such that $n \geq m(e+1)$. 
Then there is a natural isomorphism
\[
BT_m(\Phi_n(G))\cong G(m).
\]
If $\mathcal{P}$ is a truncated display of level $n$ and order of
nilpotence $\le e$ we have a canonical isomorphism
\begin{displaymath}
\Phi_m(BT_m(\mathcal{P})) \cong \mathcal{P}(m)
\end{displaymath}
\end{prop}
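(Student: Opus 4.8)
The plan is to establish the two isomorphisms separately: the first by combining Lemma~\ref{Le-FG-Phi} with an elementary Frobenius computation, the second by reducing to the non-truncated theory via a display lift of $\PPP$.

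\emph{The first isomorphism.}
By definition~\eqref{D2GII5e} we have $BT_m(\Phi_n(G))=FG_n(\Phi_n(G))[p^m]$, and Lemma~\ref{Le-FG-Phi} provides a natural isomorphism $FG_n(\Phi_n(G))\cong G[F^n]$. So it suffices to show the inclusion $G(m)=G[p^m]\subseteq G[F^n]$ of subgroup schemes of $G$; then $(G[F^n])[p^m]=G[p^m]=G(m)$, and naturality in $G$ is inherited from Lemma~\ref{Le-FG-Phi}. Since $n\ge m(e+1)$, it is enough to show that $F^{m(e+1)}$ is zero on $G(m)$. I would prove by induction on $k\ge 1$ that $F^{k(e+1)}$ is zero on $G[p^k]$. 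For $k=1$ this is the hypothesis that $G$ is nilpotent of order $\le e$ (Definition~\ref{Def:nilpotence-pdiv}), i.e.\ $F^{e+1}$ kills $G(1)=G[p]$. For the inductive step, put $j=(k-1)(e+1)$ and use the short exact sequence of truncated Barsotti--Tate groups
\[
0\longrightarrow G[p]\longrightarrow G[p^k]\xrightarrow{\,p\,}G[p^{k-1}]\longrightarrow 0,
\]
which follows from the $p$-divisibility axioms. Because Frobenius commutes with multiplication by $p$, applying $F^j$ to $G[p^k]$ and then $p$ agrees with first applying $p\colon G[p^k]\to G[p^{k-1}]$ and then $F^j$, which is zero by the inductive hypothesis; hence $F^j$ maps $G[p^k]$ into $\Ker\bigl(p\colon(G[p^k])^{(p^j)}\to(G[p^{k-1}])^{(p^j)}\bigr)=(G[p])^{(p^j)}$. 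As $F^{e+1}$ is zero on $(G[p])^{(p^j)}$, being a Frobenius twist of the zero map on $G[p]$, we conclude that $F^{k(e+1)}=F^{e+1}\circ F^j$ is zero on $G[p^k]$. Taking $k=m$ finishes this part.

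\emph{The second isomorphism.}
First I would lift $\PPP$ to a display $\PPP'$ over $R$: one may lift the defining matrix from $\GL_h(W_n(R))$ to $\GL_h(W(R))$, and $\PPP'$ is again nilpotent of order $\le e$ since the order of nilpotence is unchanged under truncation (it depends only on $\breve D$ modulo $p$; see Lemma~\ref{Le-nilpotent}). Set $G'=BT(\PPP')$; by Lemma~\ref{Le:nilpotence} this is a formal $p$-divisible group of nilpotence order $\le e$, and the proof of Lemma~\ref{Le:BTm} gives $BT_m(\PPP)=G'[p^m]=G'(m)$, using $n\ge m(e+1)$. Since $pR=0$ we have $p^mW_m(R)=0$, so, computing $\Phi_m(G'(m))$ from the resolution $0\to G'(m)\to G'\xrightarrow{p^m}G'\to 0$ exactly as in the proof of Proposition~\ref{AbLF1p}, we obtain $\Phi_m(BT_m(\PPP))=\Phi_m(G'(m))\cong\Phi_R(G')(m)$. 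Finally I would invoke that $\Phi_R\circ BT$ is isomorphic to the identity on nilpotent displays over $R$ --- the relevant half of the equivalence between nilpotent displays and formal $p$-divisible groups (\cite{Zink-DFG} for excellent $R$, \cite{Lau-Display} in general), which can also be read off directly from the Cartier-theoretic construction of $\Phi$ --- to get $\Phi_R(G')=\Phi_R(BT(\PPP'))\cong\PPP'$. Truncating to level $m$ yields $\Phi_m(BT_m(\PPP))\cong\PPP'(m)=\PPP(m)$, since $\PPP'$ lifts $\PPP$.

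\emph{Main obstacle.}
No individual step is hard; the real content is canonicity. The isomorphisms above are built from choices --- the resolution of $G$ by $p$-divisible groups hidden inside Lemma~\ref{Le-FG-Phi}, and the display lift $\PPP'$ --- and one must check they are independent of those choices and natural (in $G$, resp.\ in $\PPP$). For the first isomorphism this is already guaranteed by the naturality in Lemma~\ref{Le-FG-Phi}. For the second, one verifies that two lifts $\PPP'_1,\PPP'_2$ of $\PPP$ give the same isomorphism, using that $BT_m(\PPP)$ and $\PPP(m)$ are intrinsic to $\PPP$ together with the functoriality of $BT$, of $\Phi$, and of the identification in Proposition~\ref{AbLF1p}; alternatively one carries out the construction at the level of the derived functor of~\eqref{Eq-GDbDm}, where functoriality is manifest. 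A secondary, purely routine point used already in the first part is the standard behaviour of the Frobenius-kernel and $p^k$-torsion subgroup schemes of a $BT_n$-group in short exact sequences and under Frobenius twist.
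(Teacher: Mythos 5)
Your proof of the first isomorphism is essentially the paper's: both arguments reduce to the inclusion $G(m)\subseteq G[F^{n}]$ and then take the kernel of $p^{m}$ on both sides of the isomorphism of Lemma \ref{Le-FG-Phi}; the paper states $G(1)\subseteq G[F^{e+1}]\Rightarrow G(m)\subseteq G[F^{m(e+1)}]$ in one line where you spell out the induction on $k$, a harmless elaboration. For the second isomorphism you genuinely diverge. The paper deduces it from Lemma \ref{Le:Gamma}: since $\Phi_n$ on isomorphism groupoids is a torsor under $\uAut^o(G)$ and the reduction $\uAut^o(G)\to\uAut(G(m))$ vanishes for $n\ge m(e+1)$ (Corollary \ref{Co:reduction}), any functor $\Gamma$ with $\Gamma\circ\Phi_n\cong\tau$ is unique up to unique isomorphism and automatically satisfies $\Phi_m\circ\Gamma\cong\tau$; the first isomorphism exhibits $BT_m$ as such a $\Gamma$. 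You instead lift $\PPP$ to a display $\PPP'$ over $R$, identify $BT_m(\PPP)$ with $BT(\PPP')[p^m]$ via Lemma \ref{Le:BTm}, and invoke $\Phi_R(BT(\PPP'))\cong\PPP'$ from the full equivalence of \cite{Zink-DFG,Lau-Display}. This is a valid proof of the proposition, but it has two costs. First, it imports the entire display/formal-group equivalence, which the paper is partly aiming to reprove (see remark 2) of the introduction), so your route cannot serve that programme. Second, the canonicity issue you flag at the end is real and not entirely routine: two display lifts $\PPP'_1,\PPP'_2$ of $\PPP$ need not be isomorphic (lifting a level-$n$ truncated display to a display involves genuine choices), so there is no morphism through which to compare the two resulting isomorphisms $\Phi_m(BT_m(\PPP))\cong\PPP(m)$; making this comparison seems to require exactly the rigidity supplied by Corollary \ref{Co:reduction} and Lemma \ref{Le:Gamma}, at which point one has essentially rejoined the paper's argument. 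What the paper's approach buys is precisely that naturality and independence of choices come for free from the uniqueness clause of Lemma \ref{Le:Gamma}.
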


We note that $\Phi_n(G)$ is nilpotent of order $\le e$ by Lemma \ref{Le:nilpotence-Phi},
and therefore $BT_m(\Phi_m(G))$ is a truncated $p$-divisible group by Lemma \ref{Le:BTm}.

\begin{proof}
Since $G$ is nilpotent of order $\le e$ we have $G(1) \subseteq G[F^{e+1}]$
and thus $G(m)\subseteq G[F^{m(e+1)}]\subseteq G[F^n]$.
By taking the kernel of multiplication by 
$p^{m}$ on both sides of \eqref{D2GII2e} we obtain the first isomorphism of the proposition:
\[
\label{Eq:BT-Phi}
BT_m(\Phi_n(G)) \cong G[F^{n}][p^{m}]=G[p^m].  
\]
The second isomorphism follows using Lemma \ref{Le:Gamma} below.
\end{proof}

\begin{remark}
In the proof of Lemma \ref{Le-FG-Phi} we have used the natural isomorphism \eqref{Eq-BT-Phi}
for arbitrary $p$-divisible groups. The proof of this fact in \cite{Lau-Smoothness}
is complicated because it is difficult to relate directly the functors $\Phi$ and $BT$.

If we want to prove Lemma \ref{Le-FG-Phi} only for infinitesimal truncated $p$-divisible groups,
which is sufficient for Proposition \ref{Pr-BT-Phi}, we can modify the proof as follows.

a) One can work with resolutions $0\to G\to X_0\to X_1\to 0$ by formal
$p$-divisible groups. Such resolutions exist at least f.p.q.c.\ locally, because
f.p.q.c.\ locally $G$ extends to a formal $p$-divisible group.
By f.p.q.c.\ descent of relative truncated displays this is sufficient to construct $\Phi_n$. 
In this way we use \eqref{Eq-BT-Phi}
only for formal $p$-divisible groups, which is easier than the general case;
the proof uses the crystalline comparison of \cite{Zink-DFG} and the
equivalence, denoted by ($*$) in the following, between formal $p$-divisible groups and nilpotent displays
over arbitrary rings $R$ in which $p$ is nilpotent (\cite{Lau-Display, Lau-Smoothness}).

b) In addition one can restrict the relevant base rings $R$ and the
f.p.q.c.\ coverings $R\to R'$ such that $G_{R'}$ extends to a $p$-divisible group.
Namely, w.l.o.g.\ $R$ is an $\FF_p$-algebra of finite type, and we can
take $R'=\prod\hat R_\Fm$ where $\Fm$ runs through the maximal 
ideals of $R$.\footnote{One can also use that every truncated $p$-divisible group 
extends to a $p$-divisible group \'etale locally, but this is more difficult to show.}
Over these rings the equivalence ($*$) is already proved in \cite{Zink-DFG},
which is sufficient to deduce \eqref{Eq-BT-Phi} in the cases necessary for
the proof of Lemma \ref{Le-FG-Phi}.

c) One can also consider the following variant $\Phi_n'$ of the 
functor $\Phi_n$ restricted to infinitesimal groups:
Let $G$ be an infinitesimal truncated $p$-divisible group of level $n$.
If there is a resolution $0\to G\to X_0\to X_1\to 0$ by formal $p$-divisible groups,
let $\PPP_i$ be the nilpotent display associated to $X_i$ by the 
equivalence ($*$), and define $\Phi_n'(G)$ as the kernel of the map of
truncations $\PPP_0[n]\to\PPP_1[n]$. In general use f.p.q.c.\ descent
to define $\Phi_n'(G)$. 
Then the proof of Lemma \ref{Le-FG-Phi}
shows that $BT_m(\Phi_n'(G))\cong G(m)$ as before.
As explained in b), with appropriate modifications the proof
uses only the equivalence ($*$) in the cases covered by \cite{Zink-DFG}.
\end{remark}

Finally we extend the last two Propositions to rings where $p$ is nilpotent.

\begin{prop}
\label{Pr:BT-Phi2}
Let $S$ be a ring with $p^{m+1}S=0$ for some $m\ge 0$.
For integers $s,t,e\ge 0$ such that $t\ge (s+m)(e+1)$
and $t\ge (m(e+2)+2)(e+1)$ there is a functor
\[
 BT_{s}:  \mathcal{D}^{(e)}_{t}(S) \rightarrow \BBB\TTT^{(e)}_s(S).
\]
If $p^nW_t(S)=0$, then the composition
$\BBB\TTT_n^{(e)}(S)\xrightarrow{\Phi_t\,}\DDD_t^{(e)}(S)\xrightarrow{BT_s\,}\BBB\TTT_s^{(e)}(S)$
is isomorphic to the truncation functor.
\end{prop}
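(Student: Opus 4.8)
The plan is to reduce to the case $pR=0$ treated in Theorem~\ref{AbI1t}, using the relative deformation theory of Section~2 and the crystalline comparison of Theorem~\ref{Theorem2}. Write $R=S/pS$, so that $S\to R$ is a $pd$-thickening whose kernel $pS$ carries its canonical divided powers; since $p^{m+1}S=0$ we have $p^{m}\cdot pS=0$, hence $S\to R$ is an object of $\Cris_m(R)$. For $\PPP\in\DDD^{(e)}_{t}(S)$ put $\bar\PPP=\PPP\otimes_SR\in\DDD^{(e)}_t(R)$ and fix the auxiliary level $\ell=\max\{\,s+m,\;m(e+2)+2\,\}$. The two hypotheses on $t$ together give $t\ge\ell(e+1)$, so Theorem~\ref{AbI1t} produces $\bar G:=BT_{\ell,R}(\bar\PPP)\in\BBB\TTT^{(e)}_{\ell}(R)$, which is infinitesimal of nilpotence order $\le e$ and satisfies $\Phi_{\ell,R}(\bar G)\cong\bar\PPP(\ell)$; note $\ell>m(e+1)+1$.

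The construction of $BT_s(\PPP)$ then proceeds by deforming $\bar G$ to $S$. Working Zariski-locally on $\Spec S=\Spec R$ (or after a faithfully flat base change), the infinitesimal group $\bar G$ extends to a formal $p$-divisible group $\bar X$ over $R$ of nilpotence order $\le e$ with $\bar X(\ell)=\bar G$, so $\Phi_\ell(\bar X(\ell))\cong\bar\PPP(\ell)$. Since $p^\ell W_\ell(R)=0$ and $\ell>m(e+1)+1$, Proposition~\ref{Pr:DXDP} (Theorem~\ref{Theorem2}) supplies a canonical isomorphism of $S$-modules
\[
\mathbb D_{\bar X}(S)\;\cong\;\mathbb D_{\bar\PPP(\ell)}(S)\;=\;\mathbb D_{\bar\PPP}(S)\;=\;S\otimes_{\mathcal W_t(S)}P,
\]
the middle equality holding because both crystals are computed from the level-$1$ truncation of a lift (cf.\ \eqref{AbCr1e}); under this isomorphism the Hodge filtration of $\PPP$ becomes a lift of the Hodge filtration of $\bar X$. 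By the Grothendieck-Messing theorem this lift corresponds to a formal $p$-divisible group $X$ over $S$ with $X\otimes_SR=\bar X$, of nilpotence order $\le e$, and I set $BT_s(\PPP):=X[p^{s}]$, which is a truncated $p$-divisible group of level $s$ over $S$ lying in $\BBB\TTT^{(e)}_s(S)$.

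This is independent of all choices and globalizes. If $\bar X_1,\bar X_2$ are two local extensions of $\bar G$, then—because $\ell\ge m(e+2)+2$—Corollary~\ref{Co:DefXXt} identifies lifts of $\bar X_i$ to $S$ with lifts of $\bar G$ to $S$, compatibly with the description of deformations by lifts of the Hodge filtration (Proposition~\ref{Ab3p} together with Grothendieck-Messing, as in the proof of that Corollary); hence $X_i(\ell)$ is the lift of $\bar G$ prescribed by the Hodge filtration of $\PPP$, independently of $i$. Thus the local objects $BT_\ell(\PPP):=X(\ell)$, and a fortiori $BT_s(\PPP)=BT_\ell(\PPP)[p^s]$, glue along canonical crystalline isomorphisms over $\Spec S$. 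For the compatibility with $\Phi_t$: if $p^{n}W_t(S)=0$ and $G\in\BBB\TTT^{(e)}_n(S)$, then locally $G=X(n)$ for a formal $p$-divisible group $X$ over $S$ of order $\le e$ (using \cite[Th\'eor\`eme 4.4]{Illusie-BT}), so that $\Phi_t(G)=\Phi_S(X)(t)$; running the construction on $\PPP=\Phi_S(X)(t)$ returns $\bar X=X\otimes_SR$ without any localization and, as $\Phi_S$ is built compatibly with Hodge filtrations, returns $X$ itself after Grothendieck-Messing, so that $BT_s(\Phi_t(G))\cong X[p^s]=G(s)$, naturally in $G$.

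It remains to make $BT_s$ a functor. A morphism $\alpha\colon\PPP_1\to\PPP_2$ in $\DDD^{(e)}_t(S)$ reduces to $\bar\alpha\colon\bar\PPP_1\to\bar\PPP_2$, hence to $BT_{\ell,R}(\bar\alpha)\colon\bar G_1\to\bar G_2$, and induces on crystals a map carrying the Hodge filtration of $\PPP_1$ into that of $\PPP_2$ (morphisms of displays respect Hodge filtrations). Following the device of Proposition~\ref{Pr-lift}—passing to the automorphism $\left(\begin{smallmatrix}1&0\\ \bar\alpha&1\end{smallmatrix}\right)$ of $\bar\PPP_1\oplus\bar\PPP_2$ and using that $BT_s$ carries direct sums to direct sums—one reduces the lifting of $\alpha$ to the lifting of an isomorphism, which the rigidity of Proposition~\ref{Pr-lift} and Corollary~\ref{Co:DefXXt} then supply canonically. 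I expect the main obstacle to be precisely this last point: reconciling the level loss in Proposition~\ref{Pr-lift} (namely $m(e+1)+1$ steps for morphisms) and the threshold $m(e+2)+2$ of Corollary~\ref{Co:DefXXt} with the working level $\ell$ and with the level $t$ of the source, so that morphisms genuinely descend to level $s$ and nothing depends on the auxiliary choices. This matching of levels is exactly what forces the inequalities $t\ge(s+m)(e+1)$ and $t\ge(m(e+2)+2)(e+1)$, and the careful bookkeeping it requires—rather than any single conceptual ingredient—is the heart of the proof.
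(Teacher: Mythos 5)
Your construction of the object $BT_s(\PPP)$ is a workable variant of the paper's: where you first pass to $R=S/pS$, apply Theorem \ref{AbI1t} to get $\bar G$, extend it locally to a formal $p$-divisible group $\bar X$ over $R$, and then deform to $S$ via the crystal comparison and Grothendieck--Messing, the paper instead lifts $\PPP$ directly to a (non-truncated) display $\tilde\PPP$ over $S$, sets $X=BT(\tilde\PPP)$ using the display/formal-group equivalence over $S$, and defines $BT_s(\PPP)=X(s)$; independence of the lift is then exactly your deformation-theoretic argument via Corollary \ref{Co:DefXXt}. The paper's route has the practical advantage that $X$ exists globally over $S$ from the start, so no gluing of local crystalline identifications is needed; your route buys a construction that stays closer to the truncated data but pays for it with the localization/descent step.

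The genuine gap is the one you yourself flag and then do not close: functoriality on morphisms. Your appeal to ``the rigidity of Proposition \ref{Pr-lift}'' cannot do this job, because that rigidity concerns morphisms of \emph{relative truncated displays}, whereas what must be shown is that the lift $\beta:X(\ell)\to X'(\ell)$ of $\beta_0=BT_{\ell,R}(\bar\alpha)$ --- whose existence follows from transporting $\alpha$ along the deformation bijections --- has a truncation $\beta(s)$ independent of the choice of $\beta$. The missing ingredient is Lemma \ref{Le:std}: for truncated $p$-divisible groups over $S$ of level $u>m$, a homomorphism vanishing over $R$ is killed by $p^m$ (since $p^{m+1}S=0$), hence truncates to zero at level $u-m$. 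This is precisely why the threshold $\ell\ge s+m$ is imposed, and it is what makes $\beta(s):X(s)\to X'(s)$ canonical, hence makes $BT_s$ well defined on the groupoid of isomorphisms; only after that does the graph trick with $\left(\begin{smallmatrix}1&0\\ \bar\alpha&1\end{smallmatrix}\right)$ (together with compatibility with direct sums, which holds because the construction does not depend on the chosen lift) extend $BT_s$ to arbitrary morphisms, and only then does the natural isomorphism $BT_s(\Phi_t(G))\cong G(s)$ become natural in $G$ (choose $\beta=\gamma(s+m)$ for a given $\gamma:G\to G'$). Without this group-side rigidity statement the ``careful bookkeeping'' you defer to is not merely bookkeeping but the actual argument, so as written the proof is incomplete at exactly this point.
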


\begin{proof}
Let $R=S/pS$. By enlarging $s$ we may assume that $s\ge m(e+1)+2$.
For each truncated display $\PPP\in\mathcal\DDD_t^{(e)}(S)$ we chose a
display $\tilde\PPP$ over $S$ which lifts $\PPP$, and we set $X=BT(\tilde\PPP)$, 
or equivalently $\tilde\PPP=\Phi(X)$. We want to define $BT_s(\tilde\PPP)=X(s)$.
We have the following commutative diagram of functors, where the solid arrows
exist over $S$ and over $R$ (see Proposition \ref{AbLF1p}), 
while $BT_{s+m}$ exists only over $R$ (see Lemma \ref{Le:BTm}).
\[
\xymatrix@M+0.2em{
\BBB\TTT^{(e)} \ar[rr] \ar[d]_-\Phi &&
\BBB\TTT_{s+m}^{(e)} \ar[d]^-{\Phi_s} \\
\DDD^{(e)} \ar[r] & \DDD_t^{(e)} \ar[r] \ar@{-->}[ur]^{BT_{s+m}} & \DDD_s^{(e)}
}
\]
The diagram over $R$ commutes by Proposition \ref{Pr-BT-Phi}, 
in particular there is a canonical isomorphism $X(s+m)_R\cong BT_{s+m}(\PPP_R)$.
Corollary \ref{Co:DefXXt} and its proof imply that the following maps of sets of isomorphism
classes of deformations from $R$ to $S$ are bijective:
\[
\Def_{S/R}(\PPP_R)\to\Def_{S/R}(\PPP(s)_R)\xleftarrow{\Phi_s}\Def_{S/R}(X(s+m)_R)
\]
Assume that $\PPP'\in\mathcal\DDD_t^{(e)}(S)$ is another truncated display and
$\alpha:\PPP\to\PPP'$ is an isomorphism. We chose a lift $\tilde\PPP'$ and $X'=BT(\tilde\PPP')$
as above. Then $\beta_0=BT_{s+m}(\alpha_R)$ is an isomorphism $X(s+m)_R\cong X'(s+m)_R$.
Since $\alpha$ is a lift of $\alpha_R$ it follows that there is a lift $\beta:X(s+m)\to X'(s+m)$
of $\beta_0$. By Lemma \ref{Le:std} below, the reduction $\beta(s):X(s)\to X'(s)$ is
independent of the choice of $\beta$. Thus we have defined the functor $BT_s$ over
$S$ on the level of groupoids, and this functor preserves finite direct sums since the
construction is independent of the choice of the lift $\tilde\PPP$. 
Then a standard argument gives the full functor $BT_s$
(compare \cite{Zink-DFG}, proof of Theorem 46).

To prove the last assertion we note that for $\PPP=\Phi_t(G)$,
at least f.p.q.c\ locally we can lift $G$ to a $p$-divisible group $X$.
Then we choose $\PPP=\Phi(X)$ in the above construction, thus
$BT_s(\Phi_t(G))=G(s)$.
For a given isomorphism $\gamma:G\to G'$ with induced $\alpha:\PPP\to\PPP'$
we can choose $\beta=\alpha(s+m)$, which implies that $\beta(s)=\alpha(s)$ as required.
\end{proof}

\begin{lemma}
\label{Le:std}
Let $G$ and $H$ by truncated $p$-divisible groups over $S$ of level $u>m$.
If a homomorphism $\phi:G\to H$ reduces to zero over $R$, then the truncation
$\phi(u-m):G(u-m)\to H(u-m)$ is zero.
\end{lemma}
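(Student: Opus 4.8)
The plan is to reduce the assertion to the statement that $p^m\phi=0$ as a homomorphism $G\to H$, and then to prove that statement by a direct computation with the Hopf algebras of $G$ and $H$; this second step in fact works for arbitrary finite flat commutative group schemes over $S$.

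First I would carry out the reduction. Since $G$ and $H$ are truncated $p$-divisible groups of level $u$ and $u-m\ge 1$, the defining exactness $\Ker(p^{u-m}\id_G)=\Image(p^m\id_G)$ shows that $p^m\id_G$ factors as an fppf-epimorphism $G\twoheadrightarrow G(u-m)$ followed by the inclusion $G(u-m)\hookrightarrow G$, and likewise for $H$. Composing with $\phi$ and using that $\phi$ carries $p^{u-m}$-torsion into $p^{u-m}$-torsion, one obtains a factorization
\[
p^m\phi\colon\quad G\ \twoheadrightarrow\ G(u-m)\ \xrightarrow{\ \phi(u-m)\ }\ H(u-m)\ \hookrightarrow\ H .
\]
As the outer arrows are an epimorphism and a monomorphism of fppf sheaves, $p^m\phi=0$ if and only if $\phi(u-m)=0$, so it suffices to prove $p^m\phi=0$.

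Next I would prove $p^m\phi=0$. Write $G=\Spec A$ and $H=\Spec B$, with augmentation ideals $I\subset A$ and $J\subset B$; the counits split the units, so $A=S\oplus I$ and $B=S\oplus J$ as $S$-modules. Because $\phi$ reduces to zero over $R=S/pS$, the map $\phi^*\otimes_S R$ factors through the counit, hence $\phi^*(J)\subseteq pA$; together with $\epsilon_A\circ\phi^*=\epsilon_B$ this gives $\phi^*(J)\subseteq pA\cap I=pI$. Now introduce the weight filtration $W_N=\sum_{a+k\ge N,\;k\ge 1}p^aJ^k\subseteq J$, so that $W_1=J$, $p^aW_N\subseteq W_{a+N}$ and $W_aW_b\subseteq W_{a+b}$. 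The key point is that $[p]_H^*$ raises the weight by one: $[p]_H^*$ preserves $J$ and acts by multiplication by $p$ on $J/J^2$, so $[p]_H^*(J)\subseteq pJ+J^2\subseteq W_2$; applying the ring homomorphism $[p]_H^*$ to a product of $k$ elements of $J$ and then multiplying by $p^a$ gives $[p]_H^*(W_N)\subseteq W_{N+1}$ for $N\ge 1$. Iterating $m$ times yields $[p^m]_H^*(J)\subseteq W_{m+1}$, and then
\[
\phi^*\bigl([p^m]_H^*(J)\bigr)\ \subseteq\ \phi^*(W_{m+1})\ \subseteq\ \sum_{a+k\ge m+1}p^a(pI)^k\ =\ \sum_{a+k\ge m+1}p^{a+k}I^k\ \subseteq\ p^{m+1}A\ =\ 0 ,
\]
using $p^{m+1}S=0$. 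Since $(p^m\phi)^*=\phi^*\circ[p^m]_H^*$ then kills $J$ and fixes $1$, it is the trivial Hopf-algebra homomorphism, i.e.\ $p^m\phi=0$. By the first step this finishes the proof.

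I do not expect a serious obstacle: the substance is the two observations above — that $\phi(u-m)=0\iff p^m\phi=0$ by the truncated Barsotti-Tate axioms, and that each application of $[p]_H^*$ trades one factor in the augmentation ideal for one power of $p$. The only point requiring care is the bookkeeping for the weight filtration, i.e.\ the inclusions $W_aW_b\subseteq W_{a+b}$, $p^aW_N\subseteq W_{a+N}$ and $[p]_H^*(W_N)\subseteq W_{N+1}$, the last of which uses that $[p]_H^*$ is a ring homomorphism. (Alternatively one could work fppf-locally, realize $G$ and $H$ as truncations of $p$-divisible groups, and deduce the statement from Grothendieck-Messing rigidity; the direct argument avoids that local lifting.)
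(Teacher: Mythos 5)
Your argument is correct. The overall strategy coincides with the paper's: both proofs reduce the statement to showing $p^m\phi=0$, using that the truncated Barsotti--Tate axiom $\Image(p^m\id_G)=\Ker(p^{u-m}\id_G)=G(u-m)$ exhibits $p^m\phi$ as $\iota_H\circ\phi(u-m)\circ\pi_G$ with $\pi_G$ an fppf epimorphism and $\iota_H$ a monomorphism. Where you diverge is in how $p^m\phi=0$ is obtained. The paper simply cites \cite[Lemma 3.4]{Lau-Relation}, which says that for every commutative affine group scheme $X$ over $S$ the kernel of $X(S)\to X(R)$ is killed by $p^m$, and applies it to points of $H$ over $S$-algebras, using that $\phi(a)$ lies in that kernel for every $a\in G(A)$. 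You instead inline a direct Hopf-algebra proof: from $\phi_R=0$ you get $\phi^*(J)\subseteq pI$, and the observation that $[p]_H^*$ raises the weight of the filtration $W_N=\sum_{a+k\ge N,\,k\ge 1}p^aJ^k$ by one gives $[p^m]_H^*(J)\subseteq W_{m+1}$, whence $(p^m\phi)^*$ annihilates $J$ because $p^{m+1}S=0$. This computation is essentially the standard proof of the cited lemma, specialized to the homomorphism $\phi$ rather than to individual points; your bookkeeping with $W_N$ (in particular $[p]_H^*(p^aJ^k)\subseteq p^aW_{2k}\subseteq W_{a+k+1}$ and $\phi^*(p^aJ^k)\subseteq p^{a+k}A$) checks out. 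What you gain is a self-contained argument valid for any homomorphism of finite locally free commutative group schemes that vanishes modulo $p$; what the paper gains is brevity by outsourcing exactly this computation to \cite{Lau-Relation}.
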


\begin{proof}
For every commutative affine group scheme $X$ over $S$, the kernel of
$X(S)\to X(R)$ is annihilated by $p^m$; see \cite[Lemma 3.4]{Lau-Relation}. 
If we apply this to the base change of $H$ to $S$-algebras,
it follows that $p^m\phi=0$. Therefore $\phi(u-m)=0$.
\end{proof}

\section{Vanishing homomorphisms}

Let $G$ and $G'$ be truncated $p$-divisible groups of level $n$ over a ring $R$ with $pR=0$,
with associated truncated displays $\PPP$ and $\PPP'$.
We consider the commutative group scheme of vanishing homomorphisms
\[
\uHom^o(G,G')=\Ker[\uHom(G,G')\to\uHom(\PPP,\PPP')]
\]
and the group scheme of vanishing automorphisms 
\[
\uAut^o(G)=\Ker[\uAut(G)\to\uAut(\PPP)].
\]
By \cite[Remark 4.8]{Lau-Smoothness} this is a commutative finite locally free group
scheme of rank $p^{ncd}$ where $d=\dim(\Lie G)$ and $c=\dim(\Lie G^\vee)$.

\begin{lemma}
\label{Le-uHom}
The group scheme $\uHom^o(G,G')$ is infinitesimal finite locally free of rank $p^{ncd'}$,
where $c=\dim(\Lie G^\vee)$ and $d'=\dim(\Lie G')$.
\end{lemma}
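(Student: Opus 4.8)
The plan is to reduce the whole statement, except for the precise value of the rank, to the known case of vanishing automorphisms by the $2\times 2$ block-matrix device, and then to compute the rank by adapting the argument of \cite[Remark 4.8]{Lau-Smoothness} to the pair $(G,G')$.

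First I would pass to $H=G\oplus G'$. Since $\Phi_{n,R}$ is additive, an endomorphism of $H$ vanishes on displays iff each of its four blocks does, and a block matrix $\left(\begin{smallmatrix}\alpha&\beta\\\gamma&\delta\end{smallmatrix}\right)$ with $\alpha,\delta$ of the form $\id+(\text{vanishing})$ and $\beta,\gamma$ vanishing is automatically invertible, because vanishing homomorphisms are nilpotent. This gives an isomorphism of $R$-schemes
\[
\uAut^o(H)\cong\uAut^o(G)\times\uHom^o(G',G)\times\uHom^o(G,G')\times\uAut^o(G').
\]
By \cite[Remark 4.8]{Lau-Smoothness} applied to $H$ (note $\Lie H=\Lie G\oplus\Lie G'$ and $\Lie H^\vee=\Lie G^\vee\oplus\Lie(G')^\vee$), the scheme $\uAut^o(H)$ is commutative, finite locally free and infinitesimal of rank $p^{n(c+c')(d+d')}$, where $c'=\dim\Lie(G')^\vee$. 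Since each tensor factor of $\mathcal{O}(\uAut^o(H))$ is both a sub-$R$-algebra and a quotient $R$-algebra of $\mathcal{O}(\uAut^o(H))$ via the obvious (co)augmentations, it is a retract of a finite locally free module, hence finite locally free, with nilpotent augmentation ideal, hence infinitesimal. In particular $\uHom^o(G,G')$ is infinitesimal and finite locally free, and comparing ranks in the display --- with $\operatorname{rank}\uAut^o(G)=p^{ncd}$ and $\operatorname{rank}\uAut^o(G')=p^{nc'd'}$ from the same reference --- gives $\operatorname{rank}\uHom^o(G,G')\cdot\operatorname{rank}\uHom^o(G',G)=p^{n(cd'+c'd)}$.

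It remains to compute $\operatorname{rank}\uHom^o(G,G')$; as the rank is locally constant I may assume $R=k$ is an algebraically closed field, and then I would run the argument of \cite[Remark 4.8]{Lau-Smoothness} with $(G,G')$ in place of $G$. A homomorphism $G\to G'$ vanishing on displays induces $0$ on $\Lie G$ and on $\Lie G^\vee$ (the latter being a subquotient of $\PPP$, cf.\ the proof of Lemma \ref{Le:nilpotence-Phi}), and likewise on $\Lie G'$ and $\Lie(G')^\vee$; so its leading invariant is a ``second order'' datum, a homomorphism from the Hodge filtration $\Lie G^\vee$ of the crystal of $G$ to the Lie quotient $\Lie G'$ of the crystal of $G'$. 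This is the source of the exponents $c=\dim\Lie G^\vee$ and $d'=\dim\Lie G'$. One then filters $\uHom^o(G,G')$ by the $n$ successive Witt-component (equivalently $I_{n+1}$-adic) layers and identifies the $i$-th graded piece with the Frobenius kernel of the $p^i$-th Frobenius twist of the vector group $\uHom_R(\Lie G^\vee,\Lie G')$, which has rank $p^{cd'}$; the product over $0\le i\le n-1$ is $p^{ncd'}$, consistently with the identity above. Passing from the second-order invariant to a genuine filtration of the group scheme is where the crystal comparison of Theorem \ref{Theorem2} and the rigidity of Proposition \ref{Pr-lift} enter.

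The hard part is precisely this last step: proving that the obstruction to lifting a vanishing homomorphism along an infinitesimal thickening is controlled, one Witt level at a time, by a Frobenius twist of $\uHom_R(\Lie G^\vee,\Lie G')$, so that the graded pieces of the filtration are exactly the stated Frobenius kernels. This is essentially the content of the cited remark, and the only substantive difference for a pair $G\ne G'$ is bookkeeping, since the relevant linear-algebra object is $\uHom(\Lie G^\vee,\Lie G')$ rather than an endomorphism algebra; the block-matrix reduction then serves as an independent check of the numerology.
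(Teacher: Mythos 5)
Your first half is essentially the paper's argument in a slightly different dress: the paper passes to $\uEnd^o$ rather than $\uAut^o$, shows $u\mapsto u-1$ identifies $\uAut^o(G)$ with $\uEnd^o(G)$ (using infinitesimality, which it gets from the fact that $\Phi_n$ is an equivalence over perfect fields), and then reads off finite local freeness of $\uHom^o(G,G')$ from the pointed-scheme decomposition of $\uEnd^o(G\oplus G')$. Your retract argument for the factors is fine, and your product identity
$\operatorname{rank}\uHom^o(G,G')\cdot\operatorname{rank}\uHom^o(G',G)=p^{n(cd'+c'd)}$
is correct. (Be slightly careful about circularity: the invertibility of $\id+(\text{vanishing})$ already uses that the vanishing-endomorphism scheme is infinitesimal, so that fact has to come first, not as a consequence of the decomposition.)

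The genuine gap is the rank. The product identity is symmetric in the two off-diagonal factors and therefore cannot separate $p^{ncd'}$ from $p^{nc'd}$; you recognize this and propose to compute $\operatorname{rank}\uHom^o(G,G')$ directly by a Witt-level filtration with graded pieces the Frobenius kernels of twists of $\uHom(\Lie G^\vee,\Lie G')$. But you never construct that filtration or identify its graded pieces --- you explicitly call it ``the hard part'' and then wave it through as ``bookkeeping'' --- so the quantitative claim, which is the actual content of the lemma, is not proved. Reducing to an algebraically closed field via local constancy on $\Spec R$ does not help, because you are still facing an arbitrary pair $(G,G')$. The paper's route is much cheaper: the rank is locally constant on the stack $\BBB\TTT_n\times\BBB\TTT_n$, and since the generic points of $\BBB\TTT_n$ are ordinary one may assume each of $G$ and $G'$ is $\ZZ/p^n\ZZ$ or $\mu_{p^n}$; in those four cases the rank is already computed in \cite[Remark 4.8]{Lau-Smoothness}. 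If you replace your filtration sketch by this specialization-to-the-ordinary-locus step, your proof closes.
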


\begin{proof}
Clearly $\uHom^o(G,G')$ is an affine group scheme over $R$.
It is infinitesimal since the functor $\Phi_n$ is an equivalence 
over perfect fields. We claim that the map
\[
\uAut^o(G)\to\uEnd^o(G),\quad u\mapsto u-1
\]
is an isomorphism of schemes. Indeed, let $f\in \uEnd^o(G)(A)$ for some
$R$-algebra $A$. 
There is a nilpotent ideal $I\subset A$ such that $f+1\equiv 1$ modulo $I$.
It follows easily that $f+1$ is an automorphism.
Thus $\uEnd^o(G)$ is finite locally free. Using the decomposition of pointed $R$-schemes
\[
\uEnd^o(G\oplus G')=\uEnd^o(G)\times \uHom^o(G,G')
\times \uHom^o(G',G)\times \uEnd^o(G')
\]
it follows that $\uHom^0(G,G')$ is finite locally free. 
Its rank is locally constant on the stack $\BBB\TTT_n\times\BBB\TTT_n$.
Since the generic points of $\BBB\TTT_n$ are ordinary, to compute the rank
we may assume that each of $G$ and $G'$ is either $\ZZ/p^n\ZZ$ or
$\mu_{p^n}$. In those cases the rank is computed in \cite[Remark 4.8]{Lau-Smoothness} as desired.
\end{proof}

\begin{prop}
\label{Pr:vHom}
Under the natural isomorphism $\uHom(\ZZ/p^n,G)\cong G$ we have
$\uHom^o(\ZZ/p^n,G)\cong G[F^n]$.
\end{prop}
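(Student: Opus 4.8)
The plan is to identify explicitly, over each $R$-algebra $A$, both sides of the homomorphism that defines $\uHom^o(\ZZ/p^n,G)$, and then to pin down its kernel by a rank count.

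First I would recall that $u\mapsto u(1)$ identifies $\uHom(\ZZ/p^n,G)$ with $G$: a homomorphism $\ZZ/p^n_A\to G_A$ is the same as a point of $G(A)$ killed by $p^n$, and $G$ being of level $n$ is itself killed by $p^n$, so every point qualifies. Next I would compute $\PPP_1:=\Phi_n(\ZZ/p^n)$. Since $\ZZ/p^n=(\mathbb Q_p/\ZZ_p)[p^n]$ and $\Phi_n$ of $X[p^n]$ is the $n$-truncation of the display of a $p$-divisible group $X$ (as in the proof of Proposition \ref{AbLF1p}), $\PPP_1$ is the $n$-truncation of the étale display of $\mathbb Q_p/\ZZ_p$: one has $P_1=Q_1=\WWW_n(R)$, $\iota_1=\id$, $\dot F_1$ the Witt-vector Frobenius, and hence $F_1=p\,\dot F_1$.

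Then, for any truncated display $\PPP=(P,Q,\iota,\epsilon,F,\dot F)$ of level $n$ over an $R$-algebra $A$, I would show using Lemma \ref{Le-faithful} that
\[
\Hom_{\DDD_n(A)}(\PPP_{1,A},\PPP)=\{\,q\in Q\ :\ \dot F(q)=\iota(q)\,\}.
\]
Indeed such a morphism is determined by $q:=\alpha_1(1)\in Q$, with $\alpha_0=\iota\circ\alpha_1$; compatibility with $\iota$ and $\epsilon$ is automatic, compatibility with $\dot F$ (using $\dot F_1(1)=1$ and $\iota_1=\id$) reads exactly $\dot F(q)=\iota(q)$, and compatibility with $F$ then follows from $F(\iota(y))=p\,\dot F(y)$. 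Consequently $\uHom(\PPP_1,\PPP_G)$ is the affine $R$-group scheme $A\mapsto\ker(\dot F_{G,A}-\iota_{G,A}\colon Q_{G,A}\to P_{G,A})$, and applying the display functor gives a homomorphism of $R$-group schemes
\[
\lambda\colon G=\uHom(\ZZ/p^n,G)\longrightarrow\uHom(\PPP_1,\PPP_G),\qquad \ker\lambda=\uHom^o(\ZZ/p^n,G).
\]
It remains to identify $\ker\lambda$ with $G[F^n]$. I would first establish $G[F^n]\subseteq\ker\lambda$, i.e.\ that $\Phi_n(u)=0$ whenever $u\colon\ZZ/p^n_A\to G_A$ factors as $u=j\circ v$ through $j\colon G_A[F^n]\hookrightarrow G_A$: here $G_A[F^n]$ embeds into a $p$-divisible group, so $\Phi_n(v)$ is defined and $\Phi_n(u)=\Phi_n(j)\circ\Phi_n(v)$, and since the relative Frobenius of the étale group $\ZZ/p^n$ is an isomorphism, $v$ is obtained from a homomorphism into $G_A^{(p^n)}$ through an invertible Frobenius, which on the display side — given the identification of $\PPP_1$ above — forces $\Phi_n(v)$, hence $\Phi_n(u)$, to vanish. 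Finally, by Lemma \ref{Le-uHom} applied with $c=\dim\Lie(\ZZ/p^n)^\vee=\dim\Lie\mu_{p^n}=1$ and $d'=\dim\Lie G$, the subgroup $\ker\lambda=\uHom^o(\ZZ/p^n,G)$ is finite locally free over $R$ of rank $p^{\,n\dim G}$; and $G[F^n]$ is finite locally free of rank $p^{\,n\dim G}$ by the standard theory of truncated Barsotti--Tate groups. A closed immersion between finite locally free $R$-schemes of the same rank is an isomorphism — the quotient of the structure sheaves is a finitely generated projective $R$-module of rank $0$, hence zero — so $G[F^n]=\uHom^o(\ZZ/p^n,G)$ as subgroup schemes of $G$, which is the assertion.

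The main obstacle is the inclusion $G[F^n]\subseteq\ker\lambda$: one must compute the display functor on a homomorphism $v$ that in general has to be represented by a roof in $D^{\flat}_{\le 1}(\BBB\TTT(A))$, and keep track of the Frobenius-semilinearity of $\dot F$, in order to see that $\Phi_n$ vanishes on homomorphisms through $G[F^n]$. Everything else is either standard or a short linear-algebra verification.
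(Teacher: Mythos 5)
Your overall strategy --- realise $\uHom^o(\ZZ/p^n,G)$ as a closed subgroup scheme of $G$, show it contains $G[F^n]$, and conclude by comparing ranks via Lemma \ref{Le-uHom} --- is sound in outline, and the preliminary steps are correct: $\Phi_n(\ZZ/p^n)$ is indeed the truncation of the \'etale display ($d=0$, $c=1$, $P_1=Q_1=\mathcal W_n$, $\iota_1=\id$, $\dot F_1=F$), the identification $\Hom(\PPP_1,\PPP)\cong\{q\in Q:\dot F(q)=1\otimes\iota(q)\}$ is right, both sides have rank $p^{nd}$, and a closed immersion of finite locally free schemes of equal rank is an isomorphism. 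But the step you yourself flag as the main obstacle, $G[F^n]\subseteq\ker\lambda$, is not proved, and the argument you sketch for it fails. From $F^n_G\circ u=0$ and the invertibility of $F^n_{\ZZ/p^n}$ you can only conclude $u^{(p^n)}=0$, i.e.\ that the base change of $u$ --- hence of $\Phi_n(u)$ --- along the absolute Frobenius $\Frob_A^n$ vanishes. Base change along Frobenius is not faithful over a non-reduced $A$, and since $\uHom^o(\ZZ/p^n,G)$ is infinitesimal, the points you must kill live precisely over non-reduced test rings. Concretely, writing $\Phi_n(u)=q\in Q$ with $\dot F(q)=1\otimes\iota(q)$, functoriality of Frobenius (via $\Phi_n(F^n_G)=(V^n)^\sharp$) yields only $1\otimes\iota(q)=0$ in $W_n(A)\otimes_{F^n,W_n(A)}P$, i.e.\ the Witt coordinates of $\iota(q)$ are killed by the Witt-vector Frobenius $F^n$; this is far from $q=0$. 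Indeed, post-composition with $(V^n)^\sharp$ cannot be injective on $\Hom(\PPP_1,\PPP_G)$, since otherwise $\ker\lambda$ would be trivial for every $G$.

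The paper sidesteps this inclusion entirely: both $\uHom^o(\ZZ/p^n,G)$ and $G[F^n]$ are finite locally free closed subgroup schemes of $G$ of rank $p^{nd}$; they coincide when $G$ is ordinary (where the computation reduces to $G=\ZZ/p^n$, for which $\lambda$ is injective, and $G=\mu_{p^n}$, for which $\Hom(\PPP_1,\PPP_{\mu_{p^n}})=0$); and since the generic points of the smooth, hence reduced, stack $\BBB\TTT_n$ are ordinary, two such subschemes agreeing over the dense ordinary locus agree everywhere. If you want to keep your set-up, the cleanest repair is to replace your unproved inclusion by this density argument; otherwise you must actually compute $\Phi_n$ on morphisms factoring through $\ker F^n$, which is exactly the difficulty you acknowledge but do not resolve.
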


\begin{proof}
Both $\uHom^o(\ZZ/p^n,G)$ and $G[F^n]$ are closed subgroup schemes of $G$
which are finite locally free of rank $p^{nd}$. They coincide if $G$ is ordinary. 
Since the generic points of
the stack $\BBB\TTT_n$ are ordinary, they coincide for all $G$.
\end{proof}

We recall that $G$ is called nilpotent of order $\le e$ if $F^{e+1}$ is zero on $G(1)$;
see Definition \ref{Def:nilpotence-pdiv}.

\begin{cor}
\label{Co:reduction}
If either $G$ or $(G')^\vee$ is nilpotent of order $\le e$,
then for $n\ge m(e+1)$ the reduction map
$$
\uHom^o(G,G')\to\uHom(G(m),G'(m))
$$
is zero.
\end{cor}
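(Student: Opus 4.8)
The plan is to show that any vanishing homomorphism restricts to zero on the kernel of $F^n$, and then to observe that the nilpotence hypothesis forces $G(m)$ to sit inside that kernel. As a preliminary step I would reduce the two alternatives in the hypothesis to one: Cartier duality gives a natural isomorphism $\uHom(G,G')\cong\uHom((G')^\vee,G^\vee)$, and since $\Phi_n$ preserves duality (Remark \ref{Rk-dual}) this carries $\uHom^o(G,G')$ isomorphically onto $\uHom^o((G')^\vee,G^\vee)$; moreover $(G(m))^\vee\cong G^\vee(m)$ together with the analogous statement on the display side make this isomorphism compatible with the reduction maps. Hence the case where $(G')^\vee$ is nilpotent of order $\le e$ becomes the case where the source is nilpotent of order $\le e$, applied to the pair $((G')^\vee,G^\vee)$, and I may assume from now on that $G$ itself is nilpotent of order $\le e$.

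The key point is that the restriction homomorphism $\uHom(G,G')\to\uHom(G[F^n],G'[F^n])$ vanishes on the subgroup scheme $\uHom^o(G,G')$. Indeed, by naturality of the isomorphism $FG_n(\Phi_n(-))\cong(-)[F^n]$ of Lemma \ref{Le-FG-Phi}, this restriction homomorphism factors as
\[
\uHom(G,G')\to\uHom(\PPP,\PPP')\xrightarrow{FG_n}\uHom(FG_n(\PPP),FG_n(\PPP'))\cong\uHom(G[F^n],G'[F^n]),
\]
and the first arrow is zero on $\uHom^o(G,G')$ by definition of the latter. Equivalently, for any $R$-algebra $A$ and any $f\in\uHom^o(G,G')(A)$ one has $\Phi_{n,A}(f)=0$, hence $FG_{n,A}(\Phi_{n,A}(f))=0$ because $FG_{n,A}$ is additive, hence $f$ is zero on $G[F^n]_A$. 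Note that this step uses no nilpotence assumption on $G$ or $G'$; it is pure functoriality of $FG_n$.

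It then remains to insert the numerical hypothesis. Since $G$ is nilpotent of order $\le e$, base change preserves the order of nilpotence, and exactly as in the proof of Proposition \ref{Pr-BT-Phi} one has $G(m)=G[p^m]\subseteq G[F^{m(e+1)}]\subseteq G[F^n]$, using $n\ge m(e+1)$. Therefore the reduction map $\uHom^o(G,G')\to\uHom(G(m),G'(m))$ factors through the restriction map to $\uHom(G[F^n],G'[F^n])$ and is consequently zero. I expect the only slightly fussy part to be the bookkeeping in the duality reduction — verifying that the Cartier-duality isomorphisms on $\uHom$ and on truncations are compatible with the reduction maps — while the substance of the argument, that a vanishing homomorphism is annihilated on $G[F^n]$, is essentially immediate from Lemma \ref{Le-FG-Phi}. (One could instead run the same argument with $BT_m$ and Proposition \ref{Pr-BT-Phi} in place of $FG_n$, but using $FG_n$ has the advantage of avoiding any nilpotence assumption on $G'$ until the very last step.)
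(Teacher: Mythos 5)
Your proof is correct, but it runs along a genuinely different track from the paper's. The paper reduces by duality to the case where $(G')^\vee$ is nilpotent of order $\le e$, invokes Proposition \ref{Pr:vHom} (that $\uHom^o(\ZZ/p^n,G')\cong G'[F^n]$ inside $G'$, proved by a specialization argument using that the generic points of $\BBB\TTT_n$ are ordinary), observes that nilpotence of $(G')^\vee$ forces $G'[F^{m(e+1)}]=\Image(V^{m(e+1)})$ to die under $p^{me}:G'\to G'(m)$, and then handles general $G$ by precomposing a vanishing homomorphism $u$ with arbitrary points $a:\ZZ/p^n\to G_B$. You instead reduce to the case where the \emph{source} $G$ is nilpotent, use the naturality of the isomorphism $FG_n(\Phi_n(-))\cong(-)[F^n]$ of Lemma \ref{Le-FG-Phi} to conclude that every vanishing homomorphism restricts to zero on $G[F^n]$, and then use $G(m)\subseteq G[F^{m(e+1)}]\subseteq G[F^n]$. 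Both arguments are sound, and yours is arguably shorter and makes the mechanism ("vanishing homomorphisms are invisible on the Frobenius kernel") more transparent. The trade-off is that Lemma \ref{Le-FG-Phi} rests on the isomorphism $BT(\Phi(X))\cong\hat X$ of \cite[Theorem 8.3]{Lau-Smoothness}, i.e.\ on the hard compatibility between $\Phi$ and $BT$; the whole point of the Section 4 route (as stated in the introduction) is to construct $BT_m$ by elementary arguments that \emph{avoid} establishing that compatibility directly, and the paper's proof of the corollary is deliberately kept within Proposition \ref{Pr:vHom} and Lemma \ref{Le-uHom} for that reason. So your proof is fine as mathematics but would forfeit the logical independence of Section 4 from the deeper input to Section 3. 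The duality bookkeeping you flag as "fussy" is the same in both proofs (it hinges on $\Phi_n$ preserving duality, Remark \ref{Rk-dual}, and on $(G(m))^\vee\cong G^\vee(m)$), just applied in the opposite direction.
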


\begin{proof}
We may assume that $n=m(e+1)$.
By duality it suffices to consider the case where $(G')^\vee$ is nilpotent of order $\le e$,
which means that $V^{e+1}$ is zero on $G'(1)$ and thus $V^{m(e+1)}$ is zero on $G'(m)$.
It follows that the subgroup scheme
\[
G'[F^{m(e+1)}]=\Image(V^{m(e+1)})
\]
of $G'$ maps to zero under the surjection $p^{me}:G'\to G'(m)$.
Thus for $G=\ZZ/p^n$ the corollary follows from Proposition \ref{Pr:vHom}.

If $G$ is arbitrary we consider an element $u\in\uHom^o(G,G')(A)$ for some $R$-algebra $A$. 
Let $A\to B$ be a ring homomorphism and let $a\in G(B)$. The composition
$
\ZZ/p^n\xrightarrow{\; a\;} G_B \xrightarrow{\; u\;} (G')_B
$
lies in $\uHom^o(\ZZ/p^n,G')(B)$. By the first case, its reduction to level $m$ is trivial as desired.
\end{proof}

Let $\BBB\TTT_n$ and $\DDD_n$ be the algebraic stacks over $\FF_p$ 
of truncated $p$-divisible groups resp.\ truncated displays of level $n$.
Let $\BBB\TTT_n^{(e)}\subset\BBB\TTT_n$ and $\DDD_n^{(e)}\subset\DDD_n$ 
be the closed substacks where the nilpotence order is $\le e$. 
For $n\ge m$ we have the following commutative diagram, where $\tau$ are the truncation maps.
\begin{equation}
\label{Eq-stack-square}
\xymatrix@M+0.2em@C+1em{
\BBB\TTT^{(e)}_n \ar[r]^-{\Phi_n} \ar[d]_-\tau & \DDD_n^{(e)} \ar[d]^-\tau \\
\BBB\TTT^{(e)}_m \ar[r]^-{\Phi_m} & \DDD_m^{(e)}}
\end{equation}
Here all morphisms are smooth and surjective because the diagram is the inverse
image under the inclusion $\DDD_m^{(e)}\to \DDD_m$
of the corresponding diagram without $^{(e)}$, in which all morphisms are smooth.

\begin{lemma}
\label{Le:Gamma}
For given $n\ge m$ and $e\ge 0$, there is a morphism of stacks
$\Gamma:\DDD_n^{(e)}\to\BBB\TTT_m^{(e)}$ such that
$\Gamma\circ\Phi_n\cong\tau$ iff the reduction map
$$\rho:\uAut^o(G)\to\uAut(G(m))$$
is zero for all $G$ which are nilpotent of order $\le e$.
In that case $\Gamma$ is unique up to unique isomorphism,
and we also have $\Phi_m\circ\Gamma\cong\tau$.
\end{lemma}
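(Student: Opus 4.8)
The plan is to read the statement as a $2$-descent statement for the truncation morphism $\tau\colon\BBB\TTT_n^{(e)}\to\BBB\TTT_m^{(e)}$ along $\Phi_n$. Since the morphism $\Phi_n\colon\BBB\TTT_n^{(e)}\to\DDD_n^{(e)}$ occurring in \eqref{Eq-stack-square} is smooth and surjective, it is an epimorphism of stacks and $\DDD_n^{(e)}$ is the stack quotient of the groupoid $\mathcal{R}\rightrightarrows\BBB\TTT_n^{(e)}$, where $\mathcal{R}=\BBB\TTT_n^{(e)}\times_{\DDD_n^{(e)}}\BBB\TTT_n^{(e)}$ and the two arrows are the projections $\mathrm{pr}_1,\mathrm{pr}_2$. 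Hence $1$-morphisms $\DDD_n^{(e)}\to\BBB\TTT_m^{(e)}$ correspond, up to unique $2$-isomorphism, to $1$-morphisms $\BBB\TTT_n^{(e)}\to\BBB\TTT_m^{(e)}$ equipped with a descent datum along $\Phi_n$, i.e.\ an isomorphism $\theta\colon\mathrm{pr}_1^*\tau\to\mathrm{pr}_2^*\tau$ over $\mathcal{R}$ which restricts to the identity along the diagonal $\Delta_{\Phi_n}\colon\BBB\TTT_n^{(e)}\to\mathcal{R}$ and satisfies the cocycle condition on the triple fibre product. So I first reduce the whole lemma to the claim that a descent datum for $\tau$ along $\Phi_n$ exists if and only if $\rho$ is zero for all $G$ nilpotent of order $\le e$.

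For the easy ``only if'' direction I would argue directly. Given $\Gamma$ with a $2$-isomorphism $\Gamma\circ\Phi_n\cong\tau$, take $G$ nilpotent of order $\le e$ over any $R$-algebra $A$ and $u\in\uAut^o(G)(A)$. By definition of $\uAut^o$ we have $\Phi_n(u)=\id_{\Phi_n(G)}$, hence $\Gamma(\Phi_n(u))=\id$; transporting along the natural isomorphism $\Gamma\circ\Phi_n\cong\tau$, which identifies $\Gamma(\Phi_n(u))$ with $\tau(u)=u(m)$, gives $u(m)=\id$, i.e.\ $\rho(u)=0$. Thus $\rho$ is zero.

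For the ``if'' direction I would construct the descent datum explicitly. Given $(G_1,G_2,\phi)\in\mathcal{R}(A)$, the morphism of fppf-sheaves $\uIsom(G_1,G_2)\to\uIsom(\Phi_nG_1,\Phi_nG_2)$ induced by $\Phi_n$ is, by a direct fibre-product computation, a base change of the relative diagonal $\Delta_{\Phi_n}$; since $\Phi_n$ is smooth and surjective this morphism is an fppf epimorphism: over a perfect field it is an isomorphism because $\Phi_n$ is then an equivalence, and in general it is, fppf-locally, a torsor under the finite locally free group scheme $\uAut^o(G_1)$, finite local freeness being \cite[Remark 4.8]{Lau-Smoothness}. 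Hence after an fppf cover $A\to A'$ the isomorphism $\phi$ lifts to an isomorphism $g\colon G_1\to G_2$ with $\Phi_n(g)=\phi$, and I set $\theta_{(G_1,G_2,\phi)}=g(m)$. If $g'$ is a second lift then $g^{-1}g'\in\uAut^o(G_1)(A')$, so the hypothesis $\rho=0$ forces $(g^{-1}g')(m)=\id$, hence $g(m)=g'(m)$; therefore $\theta_{(G_1,G_2,\phi)}$ is independent of $g$, descends from $A'$ to $A$, and is canonical. Canonicity makes functoriality and the cocycle condition automatic, since locally a composite of lifts is a lift; moreover the morphism $(\id_{G_1},g)\colon(G_1,G_1,\id)\to(G_1,G_2,\phi)$ in $\mathcal{R}$ together with the normalization $\theta_{(G,G,\id)}=\id$ shows that any descent datum must equal this $\theta$, so the descent datum is unique, which by the first paragraph reproves that $\Gamma$ is unique up to unique isomorphism.

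Finally, granting $\Gamma$ with $\Gamma\circ\Phi_n\cong\tau$, the remaining relation $\Phi_m\circ\Gamma\cong\tau$ on $\DDD_n^{(e)}$ I would obtain by descent along $\Phi_n$ once more: both $\Phi_m\circ\Gamma$ and the truncation $\DDD_n^{(e)}\to\DDD_m^{(e)}$ pull back along $\Phi_n$ to $\Phi_m\circ\tau\cong\tau\circ\Phi_n$ via the compatibility of $\Phi$ with truncation, and one checks the two induced descent data coincide --- fppf-locally, where $\phi$ lifts to $g$, this is exactly the naturality of that compatibility isomorphism applied to $g$. The one genuinely substantive point is the fppf-local liftability of $\phi$ to an isomorphism $g\colon G_1\to G_2$, equivalently that $\Delta_{\Phi_n}$ (or the induced map on $\uIsom$-sheaves) is fppf-surjective; this is where smoothness and surjectivity of $\Phi_n$ and the finite local freeness of $\uAut^o$ are used, and everything else is formal $2$-descent or the defining property of $\uAut^o$.
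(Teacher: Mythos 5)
Your strategy coincides with the paper's: both arguments reduce the lemma to the assertion that the truncation map on Isom-objects factors, necessarily uniquely, through the map induced by $\Phi_n$ on Isom-objects precisely when $\rho$ vanishes, the crux being that the latter map is a torsor under $\uAut^o(G_1)$. You phrase this as $2$-descent along the epimorphism $\Phi_n$ with groupoid $\BBB\TTT_n^{(e)}\times_{\DDD_n^{(e)}}\BBB\TTT_n^{(e)}$, while the paper works with the groupoid presentation $\uIsom(G_1,G_2)\rightrightarrows X\times X$ attached to a smooth presentation $X\to\BBB\TTT_n^{(e)}$; these are the same argument in two dialects, and your ``only if'' direction, the uniqueness, and the deduction of $\Phi_m\circ\Gamma\cong\tau$ are all fine.

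There is, however, a genuine gap at the one point you yourself single out as substantive: the claim that $\uIsom(G_1,G_2)\to\uIsom(\PPP_1,\PPP_2)$ is an fppf epimorphism. This is \emph{not} a formal consequence of $\Phi_n$ being smooth and surjective: for a smooth surjective morphism of algebraic stacks the relative diagonal need not be an epimorphism (the morphism $B\mu_p\to B\mathbb{G}_m$ over $\FF_p$ is smooth and surjective with fibre $\mathbb{G}_m/\mu_p\cong\mathbb{G}_m$, yet the induced map on automorphism groups is a closed immersion). Nor do your auxiliary observations close the gap: that $\Phi_n$ is an equivalence over perfect fields only gives surjectivity of the Isom map on geometric points, and a pseudo-torsor under a finite locally free group scheme which is surjective on geometric points need not be flat, hence need not be a torsor. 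What you actually need here is \cite[Theorem 4.7]{Lau-Smoothness}, which states precisely that this Isom map is a torsor under $\uAut^o(G_1)$; this is the nontrivial input the paper cites at this point, and with that citation in place of your derivation the rest of your argument goes through unchanged.
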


\begin{proof}
We consider truncated $p$-divisible groups of fixed height $h$
and truncated displays of rank $h$ without changing the notation.
Let $X=\Spec R\to\BBB\TTT_n^{(e)}$ be a smooth presentation given
by a truncated $p$-divisible group $G$ of level $n$ over $R$. 
Since all arrows in \eqref{Eq-stack-square} are smooth and surjective,
we also get smooth presentations of the other three stacks.
Let $\PPP=\Phi_n(G)$ be the truncated display associated to $G$.
Let $G_1,G_2$ and $\PPP_1,\PPP_2$ over $X\times X$ be the inverse
images of $G$ and $\PPP$ under the two projections.
We get a commutative diagram of groupoids over $X$:
\begin{equation}
\label{Eq-groupoid-square}
\xymatrix@M+0.2em@C+1em{
\uIsom(G_1,G_2) \ar[d]_\tau \ar[r]^-{\Phi_n} &
\uIsom(\PPP_1,\PPP_2) \ar[d]^\tau \\
\uIsom(G_1(m),G_2(m)) \ar[r]^-{\Phi_m} &
\uIsom(\PPP_1(m),\PPP_2(m))
}
\end{equation}
The associated diagram of stacks is \eqref{Eq-stack-square}.
The morphism $\Phi_n$ in \eqref{Eq-groupoid-square} is a
torsor under $\uAut^o(G_1)$ by \cite[Theorem 4.7]{Lau-Smoothness}.
Thus there is a morphism of schemes $\Gamma:\uIsom(\PPP_1,\PPP_2)\to\uIsom(G_1(m),G_2(m))$
such that the upper triangle commutes iff $\rho_1:\uAut^o(G_1)\to\uAut(G_1(m))$ is trivial.
In that case $\Gamma$ is unique, and the lower triangle commutes as well.
Since $G_1$ is the inverse image of the universal group under a faithfully flat map
$X\times X\to\BBB\TTT_n^{(e)}$, the reduction map $\rho_1$ is trivial iff $\rho$ is
zero for all $G$ which are nilpotent of order $\le e$.
This proves the lemma.
\end{proof}

\begin{prop}
\label{Pr:Gamma}
For $n\ge m(e+1)$ there is a functor $BT_m:\DDD_n^{(e)}\to\BBB\TTT_n^{(e)}$
such that $BT_m\circ\Phi_n$ is isomorphic to the truncation functor. This functor
$BT_m$ is unique up to unique isomorphism, and $\Phi_m\circ BT_m$ is isomorphic
to the truncation functor as well.
\end{prop}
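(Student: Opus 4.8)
The plan is to deduce the proposition directly from Lemma~\ref{Le:Gamma}. Since $e\ge 0$ we have $n\ge m(e+1)\ge m$, so the standing hypothesis $n\ge m$ of that lemma is met. Lemma~\ref{Le:Gamma} then produces the functor $BT_m:=\Gamma: \DDD_n^{(e)}\to\BBB\TTT_m^{(e)}$ together with \emph{all} the asserted properties --- existence, uniqueness up to unique isomorphism, the isomorphism $BT_m\circ\Phi_n\cong\tau$, and the isomorphism $\Phi_m\circ BT_m\cong\tau$ --- provided we verify its single hypothesis, namely that the reduction homomorphism
\[
\rho:\uAut^o(G)\to\uAut(G(m))
\]
is zero for every truncated $p$-divisible group $G$ of level $n$ over a ring $R$ with $pR=0$ which is nilpotent of order $\le e$. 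So the whole argument reduces to this vanishing statement.

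To prove it I would apply Corollary~\ref{Co:reduction} with $G'=G$. Because $G$ is nilpotent of order $\le e$ and $n\ge m(e+1)$, that corollary gives that the truncation map $\uHom^o(G,G)\to\uHom(G(m),G(m))$, i.e.\ $\uEnd^o(G)\to\uEnd(G(m))$, is zero. Next I would recall from the proof of Lemma~\ref{Le-uHom} that $u\mapsto u-1$ defines an isomorphism of pointed schemes $\uAut^o(G)\xrightarrow{\ \sim\ }\uEnd^o(G)$, and that $u\mapsto u-1$ embeds $\uAut(G(m))$ into $\uEnd(G(m))$; these two translations are compatible with reduction to level $m$. Hence for a point $u$ of $\uAut^o(G)$ with values in an $R$-algebra, the element $u-1$ of $\uEnd^o(G)$ reduces to $0$ in $\uEnd(G(m))$, which says exactly that $u(m)=1$, i.e.\ $\rho(u)$ is trivial. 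Thus $\rho$ is the zero homomorphism, as required.

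Feeding this back into Lemma~\ref{Le:Gamma} yields the proposition essentially verbatim. I do not expect a genuine obstacle here: the substantive work already sits in Lemma~\ref{Le:Gamma} (which encodes the torsor-and-descent argument along the smooth presentations) and in Corollary~\ref{Co:reduction} (which rests on Proposition~\ref{Pr:vHom} and reduction to the ordinary locus). The only point deserving a sentence of care is the passage from vanishing of the reduction map on $\uEnd^o(G)$ to vanishing on $\uAut^o(G)$, and this is settled cleanly by the translation $u\mapsto u-1$ together with the fact that it is a monomorphism on the target side; everything else is a direct citation.
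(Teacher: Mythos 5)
Your proposal is correct and follows exactly the paper's own (very terse) proof: the paper simply says to apply Corollary~\ref{Co:reduction} to $\uEnd(G)$ and then invoke Lemma~\ref{Le:Gamma}, which is precisely your argument, with the translation $u\mapsto u-1$ between $\uAut^o(G)$ and $\uEnd^o(G)$ spelled out where the paper leaves it implicit.
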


\begin{proof}
Use Corollary \ref{Co:reduction} for $\uEnd(G)$ and Lemma \ref{Le:Gamma}.
\end{proof}

\begin{remark}
By a standard argument (see \cite{Zink-DFG}, proof of Theorem 46),
the functor $BT_m$ of groupoids extends to a functor of additive categories,
which necessarily coincides with the functor of Proposition \ref{Pr-BT-Phi}.
\end{remark}

It is easy to see that the functors $BT_m$ for varying $m$
are compatible in such a way that they form an inverse to the functor
$$
\Phi:\varprojlim_n\BBB\TTT_n^{(e)}\to\varprojlim_n\DDD_n^{(e)},
$$
so this is an equivalence. This gives a new proof of the equivalence
between formal $p$-divisible groups and nilpotent displays over rings with $pR=0$.
The general case of rings in which $p$ is nilpotent follows easily by
deformation theory.


\appendix
\section{Descent for truncated displays} 

\begin{prop}
Let  $R \rightarrow S$ be a faithfully flat homomorphism
of rings in which $p$ is nilpotent. Then
the Cech complex 
\begin{displaymath}
   \mathcal{W}_n(R) \rightarrow \mathcal{W}_n(S) 
\begin{array}{c}
\rightarrow \\[-3mm]
\rightarrow 
\end{array}
\mathcal{W}_n(S\otimes_RS) 
\begin{array}{c}
\rightarrow \\[-3mm]
\rightarrow \\[-3mm]
\rightarrow  
\end{array}
\mathcal{W}_n(S\otimes_RS\otimes_RS) \ldots 
\end{displaymath}
is acyclic.
\end{prop}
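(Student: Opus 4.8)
The plan is to deduce the acyclicity of the Cech complex of $\mathcal W_n$ from the corresponding fact for the full truncated Witt vectors $W_{n+1}$, which is classical, by exhibiting $\mathcal W_n$ as the quotient of $W_{n+1}$ by an explicit $p$-torsion functor. For an abelian-group-valued functor $\mathcal F$ on rings write $C^\bullet(\mathcal F)$ for the augmented Cech complex
\[
0\to\mathcal F(R)\to\mathcal F(S)\to\mathcal F(S\otimes_RS)\to\mathcal F(S\otimes_RS\otimes_RS)\to\cdots;
\]
acyclicity means that all cohomology of $C^\bullet(\mathcal F)$ vanishes. First I would record the short exact sequence of functors
\[
0\to\mathfrak n\to W_{n+1}\to\mathcal W_n\to 0,
\]
which by the very definition of $\mathcal W_n$ as the image of \eqref{Ab1e} is exact on $R$ and on each tensor power $S^{\otimes k}$. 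By the description of the kernel of \eqref{Ab1e}, $\mathfrak n(A)=\{\,{}^{V^n}[s]: s\in A,\ ps=0\,\}$; since Verschiebung is additive and $\,{}^{V^{n+1}}$ vanishes on $W_{n+1}(A)$, the map $s\mapsto{}^{V^n}[s]$ is an isomorphism of abelian groups $A[p]\xrightarrow{\ \sim\ }\mathfrak n(A)$, natural in $A$, where $A[p]=\{s\in A:ps=0\}$. Hence $\mathfrak n$ is isomorphic to the functor $A\mapsto A[p]$.

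Next I would check that both $C^\bullet(W_{n+1})$ and $C^\bullet(\mathfrak n)$ are acyclic. For $W_{n+1}$ this is faithfully flat descent for truncated Witt vectors; it can be proved by d\'evissage along the Verschiebung filtration $W_{n+1}\supset{}^VW_n\supset\cdots\supset{}^{V^n}W_1\supset 0$, whose successive quotients are, via the Witt components $\mathbf w_i$, isomorphic to the additive group $\mathbf G_a$, and $C^\bullet(\mathbf G_a)$ is the Amitsur complex of the $R$-module $R$, which is acyclic since $S$ is faithfully flat over $R$. For $\mathfrak n\cong(A\mapsto A[p])$ the same reduction works directly: as $S^{\otimes k}$ is flat over $R$, tensoring $0\to R[p]\to R\xrightarrow{p}R$ with $S^{\otimes k}$ yields a natural isomorphism $R[p]\otimes_RS^{\otimes k}\cong S^{\otimes k}[p]$ compatible with the coface maps, so $C^\bullet(\mathfrak n)$ is the Amitsur complex of the $R$-module $R[p]$, again acyclic by faithful flatness of $S$ over $R$.

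Finally, the short exact sequence of functors gives a short exact sequence of complexes $0\to C^\bullet(\mathfrak n)\to C^\bullet(W_{n+1})\to C^\bullet(\mathcal W_n)\to 0$, and its long exact cohomology sequence, together with the vanishing of the cohomology of $C^\bullet(\mathfrak n)$ and of $C^\bullet(W_{n+1})$, forces all cohomology of $C^\bullet(\mathcal W_n)$ to vanish. The only point that takes a moment to pin down, and the closest thing to an obstacle, is that $s\mapsto{}^{V^n}[s]$ really is additive and commutes with the coface maps, so that $\mathfrak n$ is identified with $A\mapsto A[p]$ as a functor; but this follows at once from ${}^V(x+y)={}^Vx+{}^Vy$, from ${}^{V^{n+1}}W(A)=0$ in $W_{n+1}(A)$, and from the functoriality of $W$, Verschiebung and the Teichm\"uller map. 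Everything else is formal homological algebra once descent for $W_{n+1}$ is granted.
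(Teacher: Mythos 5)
Your proposal is correct and follows essentially the same route as the paper: both use the short exact sequence $0\to A[p]\to W_{n+1}(A)\to\mathcal W_n(A)\to 0$ coming from the description of the kernel of \eqref{Ab1e}, acyclicity of the Amitsur complex of $R[p]$ via flatness ($S^{\otimes k}[p]\cong R[p]\otimes_R S^{\otimes k}$), acyclicity of the Cech complex of $W_{n+1}$, and the long exact cohomology sequence. The only cosmetic difference is that you prove descent for $W_{n+1}$ by d\'evissage along the Verschiebung filtration, whereas the paper cites \cite{Zink-DFG}, Lemma 30, for this fact.
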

{\bf Proof:} To the simplicial complex above we have also the associated
chain complex which will be denoted by $\mathcal{CW}_n(S/R)$.

Let $R[p]$ be the kernel of the multiplication by $p$. By
tensoring with $\otimes_{R}S$:
\begin{displaymath}
   S[p] = R[p] \otimes_{R} S.
\end{displaymath}
By descent theory we know that the Cech complex of the $R$-module
$R[p]$ relative to the covering $\Spec S \rightarrow \Spec R$ is
acyclic: 
\begin{displaymath}
   R[p] \rightarrow S[p]
\begin{array}{c}
\rightarrow \\[-3mm]
\rightarrow 
\end{array}
(S\otimes_RS)[p] 
\begin{array}{c}
\rightarrow \\[-3mm]
\rightarrow \\[-3mm]
\rightarrow  
\end{array}
(S\otimes_RS\otimes_RS)[p]  \ldots . 
\end{displaymath}
Let $\mathcal{C}(S/R)[p]$ be the associated chain complex.  Using the
remarks after \eqref{Ab1e} we obtain an exact sequence of complexes
\begin{displaymath}
 0 \rightarrow \mathcal{C}(S/R)[p] \rightarrow \mathcal{C}W_{n+1}(S/R)
 \rightarrow  \mathcal{CW}_n(S/R) \rightarrow 0.
\end{displaymath}
The definition and exactness of the complex in the middle follows from
\cite{Zink-DFG} Lemma 30. This concludes the proof of the Proposition.

The notion of a $W$-descent datum \cite{Zink-DFG} applies to $\mathcal{W}_n(R)
\rightarrow \mathcal{W}_n(S)$ and is then called a
$\mathcal{W}_n$-descent datum.  

\begin{prop}
\label{Pr:descent-module}
With the assumptions of the last Proposition let $P$ be a finitely
generated projective  $\mathcal{W}_n(S)$-module with a descent datum
\begin{equation}\label{AbstiegDsp0e}
  \nabla:  \mathcal{W}_n(S \otimes_R S) _{p_1, \mathcal{W}_n(S)} P \rightarrow
   \mathcal{W}_n(S \otimes_R S) _{p_2, \mathcal{W}_n(S)} P.  
\end{equation}
The associated chain complex $\mathcal{C}_{\mathcal{W}_n}(P;S/R)$
(compare: \cite{Zink-DFG} (43)) 
\begin{displaymath}
   P \rightarrow \mathcal{W}_n(S \otimes_R S)
   \otimes_{\mathcal{W}_n(S)} P \rightarrow \mathcal{W}_n(S \otimes_R
   S \otimes_R S) \otimes_{\mathcal{W}_n(S)} P \rightarrow \ldots
\end{displaymath}
is exact. Here the $\mathcal{W}_n(S)$-module structure on
$\mathcal{W}_n(S \otimes_R \ldots \otimes_R S)$ is via the last factor
of the tensorproduct.  

If $P_0$ is the kernel of the first arrow we have a
canonical isomorphism 
\begin{displaymath}
   \mathcal{W}_n(S) \otimes_{\mathcal{W}_n(R)} P_0 \rightarrow P.
\end{displaymath}
\end{prop}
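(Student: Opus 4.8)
The assertion is faithfully flat descent — exactness of the Amitsur-type complex attached to a $\mathcal W_n$-descent datum, together with effectivity — for finitely generated projective modules along the cosimplicial ring $\mathcal W_n(S)\to\mathcal W_n(S\otimes_R S)\to\cdots$. The plan is to transplant the proof of the corresponding statement for ordinary Witt rings in \cite{Zink-DFG}, using the preceding Proposition (acyclicity of $\mathcal{CW}_n(S/R)$) wherever that proof invokes the acyclicity of the \v Cech complex of $W$. It is convenient to note that, by the remark after \eqref{Ab1e}, $\mathcal W_n(R)=W_{n+1}(R)/K_R$ where $K_R=\{\,{}^{V^n}[s]\mid s\in R,\ ps=0\,\}$ is an ideal isomorphic to $R[p]$, so a $\mathcal W_n(S)$-module is the same as a $W_{n+1}(S)$-module annihilated by $K_S$. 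Since $R\to S$ is flat, $(-)[p]$ commutes with $-\otimes_R S^{\otimes_R k}$, hence $K_R$ generates $K_{S^{\otimes_R k}}$ inside $W_{n+1}(S^{\otimes_R k})$; consequently, for such a module $P$, the terms $\mathcal W_n(S^{\otimes_R k})\otimes_{\mathcal W_n(S)}P$ agree with $W_{n+1}(S^{\otimes_R k})\otimes_{W_{n+1}(S)}P$, a $\mathcal W_n$-descent datum is the same as a $W_{n+1}$-descent datum, and $\mathcal C_{\mathcal W_n}(P;S/R)$ is an Amitsur complex over $W_{n+1}$. Thus the statement is essentially the one for $W_{n+1}$-modules treated in \cite{Zink-DFG}.

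Concretely I would argue as follows. As in ordinary faithfully flat descent one reduces to free modules: a finitely generated projective $\mathcal W_n(S)$-module $P$ with $\mathcal W_n$-descent datum $\nabla$ is, in the category of $\mathcal W_n$-descent data, a direct summand of $(\mathcal W_n(S)^N,\nabla_0)$, where $\nabla_0$ is the descent datum induced from $\mathcal W_n(R)^N$ — one realizes $(P,\nabla)$ as a retract of the bar construction attached to the cosimplicial ring. Since $\mathcal C_{\mathcal W_n}(-;S/R)$ is an additive functor, $\mathcal C_{\mathcal W_n}(P;S/R)$ is then a direct summand of $\mathcal C_{\mathcal W_n}(\mathcal W_n(S)^N;S/R)$, which up to the augmentation term is $\mathcal{CW}_n(S/R)^{\oplus N}$, hence exact by the preceding Proposition; so $\mathcal C_{\mathcal W_n}(P;S/R)$ is exact. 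Moreover $P_0=\Ker(P\to\mathcal W_n(S\otimes_R S)\otimes_{\mathcal W_n(S)}P)$ is a direct summand of $\Ker(\mathcal W_n(S)^N\to\mathcal W_n(S\otimes_R S)^N)=\mathcal W_n(R)^N$, so it is finitely generated projective over $\mathcal W_n(R)$, and the canonical map $\mathcal W_n(S)\otimes_{\mathcal W_n(R)}P_0\to P$ is a direct summand of the canonical isomorphism $\mathcal W_n(S)\otimes_{\mathcal W_n(R)}\mathcal W_n(R)^N\to\mathcal W_n(S)^N$, hence an isomorphism.

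The main obstacle is the reduction to the free case. Because $\mathcal W_n$ (like $W$) does not commute with tensor products over $R$, the cosimplicial ring $\mathcal W_n(S^{\otimes_R\bullet})$ is not the \v Cech nerve of $\mathcal W_n(R)\to\mathcal W_n(S)$, so a descent datum cannot be trivialized by a naive base change; instead one proceeds through the Verschiebung filtration of $\mathcal W_n$, whose graded pieces are Frobenius twists of ordinary $R$-modules to which classical faithfully flat descent applies — this is the mechanism already used in the proof of the preceding Proposition and in \cite{Zink-DFG}, and here one must in addition check that Zink's argument, stated there for $W$-free modules, survives for modules annihilated by $K_S$, equivalently that one may work throughout inside the category of $\mathcal W_n$-modules. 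The remaining points — exactness of $\mathcal C_{\mathcal W_n}(-;S/R)$ on free modules (the preceding Proposition), additivity of $P\mapsto\mathcal C_{\mathcal W_n}(P;S/R)$, and the compatibilities of the various tensor products with the $K_S$-torsion condition used above — are routine.
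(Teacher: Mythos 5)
There is a genuine gap, and it sits exactly where you place your ``main obstacle'': the reduction to the free case. You assert that $(P,\nabla)$ is, \emph{in the category of $\mathcal{W}_n$-descent data}, a direct summand of $(\mathcal{W}_n(S)^N,\nabla_0)$ with $\nabla_0$ induced from $\mathcal{W}_n(R)^N$. What is true for formal reasons is only that $P$ is a retract of an induced object in the \emph{underlying module category} (via unit and counit of the induction/restriction adjunction); being a summand of an induced datum on a finitely generated free $\mathcal{W}_n(R)$-module in the category of descent data immediately yields $P_0$ as a summand of $\mathcal{W}_n(R)^N$ with $\mathcal{W}_n(S)\otimes_{\mathcal{W}_n(R)}P_0\cong P$, i.e.\ it is essentially equivalent to the effectivity you are trying to prove. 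So the argument is circular. Your preliminary reduction to $W_{n+1}$-descent is also not correct: the kernel $K_A=\{{}^{V^n}[t]:t\in A[p]\}$ of $W_{n+1}(A)\to\mathcal{W}_n(A)$ carries the $W_{n+1}(A)$-module structure ${}^{V^n}[t]\cdot\xi={}^{V^n}[t\,w_n(\xi)]={}^{V^n}[t\,\xi_0^{p^n}]$, a Frobenius twist, so the ideal generated by $K_S$ in $W_{n+1}(S\otimes_RS)$ only reaches $S[p]\cdot\langle p^n\text{-th powers}\rangle$ and not all of $(S\otimes_RS)[p]$; flatness of $R\to S$ identifies $(S\otimes_RS)[p]$ with $R[p]\otimes_RS\otimes_RS$ as modules but does not make $K_R$ (or $K_S$) generate $K_{S\otimes_RS}$ as an ideal. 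Consequently $\mathcal{W}_n(S^{\otimes_Rk})\otimes_{\mathcal{W}_n(S)}P$ and $W_{n+1}(S^{\otimes_Rk})\otimes_{W_{n+1}(S)}P$ differ already for $P=\mathcal{W}_n(S)$, and a $\mathcal{W}_n$-descent datum is not the same as a $W_{n+1}$-descent datum.

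The paper avoids both problems by a d\'evissage that never passes through free modules with descent data: one tensors the exact sequence of $\mathcal{W}_n(S)$-modules
\begin{displaymath}
0 \rightarrow (S/\mathfrak{n}S)_{[\mathbf{w}_n]} \xrightarrow{\;V^n\;} \mathcal{W}_n(S) \rightarrow W_n(S) \rightarrow 0
\end{displaymath}
with $P$ to obtain $0\to\bar P\to P\to W_n(S)\otimes_{\mathcal{W}_n(S)}P\to 0$ compatibly with the descent data, hence a short exact sequence of \v Cech complexes whose outer terms are handled by ordinary faithfully flat descent for $R/\mathfrak{n}\to S/\mathfrak{n}S$ and by Zink's $W_n$-descent; exactness of $\mathcal{C}_{\mathcal{W}_n}(P;S/R)$ follows. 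The effectivity statement then requires a separate argument: from $H^0$ one gets $0\to\bar P_0\to P_0\to\breve P_0\to 0$, one lifts the finitely generated projective $W_n(R)$-module $\breve P_0$ to a projective $\mathcal{W}_n(R)$-module $E$, factors $E\to P_0\to\breve P_0$, shows $\mathcal{W}_n(S)\otimes_{\mathcal{W}_n(R)}E\to P$ is an isomorphism by Nakayama, and finally identifies $E$ with $P_0$ by comparing the two complexes. If you want to keep your outline, the filtration step you gesture at in your last paragraph is not an optional refinement but the entire proof, and it must be the one with graded pieces $(S/\mathfrak{n}S)_{[\mathbf{w}_n]}$ and $W_n(S)$, not a passage to $W_{n+1}$.
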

{\bf Proof:} We begin with a general remark. Let $R'$ be an
$R$-algebra. We denote by $\mathfrak{n}_{R'} \subset R'$ the ideal of
all elements annihilated by $p$. We set $\mathfrak{n} =
\mathfrak{n}_{R}$. If $R \rightarrow R'$ is flat then
$\mathfrak{n}_{R'} = \mathfrak{n} \otimes_{R} R' = \mathfrak{n}R'$. 

We denote by $\mathbf{w}_n : \mathcal{W}_n(R') \rightarrow
R'/\mathfrak{n}R'$ the homomorphism induced by the Witt polynomial of
degree $p^n$. For a $\mathcal{W}_n(R)$-module $P_0$ we set $\bar{P}_0
= R/\mathfrak{n} \otimes_{\mathbf{w}_n, \mathcal{W}_n(R)} P_0$. We
have the isomorphism
\begin{displaymath}
   R'/\mathfrak{n}R' \otimes_{\mathbf{w}_n, \mathcal{W}_n(R')}
   (\mathcal{W}_n(R') \otimes_{\mathcal{W}_n(R)} P_0) \cong
   R'/\mathfrak{n}R' \otimes_{R/\mathfrak{n}} \bar{P}_0
\end{displaymath}
If we tensor the descent datum \eqref{AbstiegDsp0e} with $W_n(S
\otimes_{R} S) \otimes_{\mathcal{W}_n(S \otimes_{R} S)}$ we obtain a
$W_n$-descent datum on $W_n(S) \otimes_{\mathcal{W}_n(R)} P$ and if we
tensor with $(S/\mathfrak{n}S \otimes_{R/\mathfrak{n}} S/\mathfrak{n}S) 
\otimes_{\mathbf{w}_n, \mathcal{W}_n(S \otimes_{R} S)}$ we obtain a
descent datum on the $S/\mathfrak{n}S$-module $\bar{P} =
S/\mathfrak{n}S \otimes_{\mathbf{w}_n, \mathcal{W}_n(S)} P$. 

By the definition of $\mathcal{W}_n(S)$ we have an exact sequence of
$\mathcal{W}_n(S)$-modules
\begin{displaymath}
   0 \rightarrow (S/\mathfrak{n}S)_{[\mathbf{w}_n]}
   \overset{V^n}{\rightarrow } 
   \mathcal{W}_n(S) \rightarrow W_n(R) \rightarrow 0.
\end{displaymath}
By tensoring with $P$ we obtain the exact sequence
\begin{equation}\label{Abstieg2e}
   0 \rightarrow \bar{P} \overset{V^n}{\rightarrow } P
\rightarrow W_n(S) \otimes_{\mathcal{W}_n(S)} P \rightarrow 0.
\end{equation}
We have a commutative diagram

\begin{displaymath}
   \begin{CD}
(S/\mathfrak{n}S \otimes_{R/\mathfrak{n}} S/\mathfrak{n}S) 
\otimes_{\mathbf{w}_n \circ p_1, \mathcal{W}_n(S)} P @>{\id \otimes \nabla}>>
(S/\mathfrak{n}S \otimes_{R/\mathfrak{n}} S/\mathfrak{n}S) 
\otimes_{\mathbf{w}_n \circ p_2, \mathcal{W}_n(S)} P\\
@V{V^n}VV  @VV{V^n}V\\
\mathcal{W}_n(S \otimes_{R}S) \otimes_{p_1, \mathcal{W}_n(S)} P @>>{\nabla}>
\mathcal{W}_n(S \otimes_{R}S) \otimes_{p_2, \mathcal{W}_n(S)} P
   \end{CD}
\end{displaymath}
Therefore the exact sequence \eqref{Abstieg2e} is compatible with the
descent data and yields an exact sequence of complexes:
\begin{displaymath}
   0 \rightarrow \mathcal{C}(\bar{P};
   (S/\mathfrak{n}S)/(R/\mathfrak{n})) \rightarrow 
   \mathcal{C}_{\mathcal{W}_n}(P;S/R) \rightarrow
   \mathcal{C}_{W_n}(P;S/R) \rightarrow 0
\end{displaymath} 
The first complex is the complex associated to the descent datum on
the $S/\mathfrak{n}S$-module $\bar{P}$ relative to $R/\mathfrak{n}
\rightarrow S/\mathfrak{n}S$. By \cite{Zink-DFG} and usual descent we
know that 
execpt for the complex in the middle we have $H^i = 0$ for $i \geq
1$. Then this holds also for the complex in the middle. Taking $H^0$
we obtain the exact cohomology sequence
\begin{displaymath}
0 \rightarrow  \bar{P}_0 \rightarrow P_0 \rightarrow \breve{P}_0
\rightarrow 0.
\end{displaymath}
By $W_n$-descent we know that the natural map
\begin{displaymath}
   W_n(S) \otimes_{W_n(R)} \breve{P}_0 \rightarrow W_n(S)
   \otimes_{\mathcal{W}_n(S)} P 
\end{displaymath}
is an isomorphism. Let $E$ be a finitely generated projective
$\mathcal{W}_n(R)$-module which lifts the $W_n(R)$-module
$\breve{P}_0$. We find a factorization $E \rightarrow P_0 \rightarrow
\breve{P}_0$. By the Lemma of Nakayama we conclude that
\begin{displaymath}
   \mathcal{W}_n(S) \otimes_{\mathcal{W}_n(R)} E \rightarrow P
\end{displaymath}
is an isomorphism. Since the last arrow is compatible with the descent
data on both sides we obtain an isomorphism of complexes
\begin{displaymath}
   \mathcal{C}_{\mathcal{W}_n}(\mathcal{W}_n(S)
   \otimes_{\mathcal{W}_n(R)} E; S/R) \rightarrow
   \mathcal{C}_{\mathcal{W}_n}(P;S/R).  
\end{displaymath}
It follows that $E\to P_0$ is an isomorphism.
This proves the Proposition. 

\begin{cor}
\label{Cor:descent-module}
For $R\to S$ as above let $P_1$ be a finitely presented $\WWW_n(R)$-module such that
$P=\WWW_n(S)\otimes_{\WWW_n(R)}P_1$ is projective.
Then $P_1$ is projective.
\end{cor}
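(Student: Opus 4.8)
The plan is to deduce the Corollary from Proposition \ref{Pr:descent-module} by equipping $P$ with its canonical descent datum and then comparing the descended module with $P_1$. Since the two composites $\WWW_n(R)\to\WWW_n(S)\xrightarrow{p_i}\WWW_n(S\otimes_R S)$ ($i=1,2$) both equal the structure map, there are canonical identifications $\WWW_n(S\otimes_R S)\otimes_{p_i,\WWW_n(S)}P=\WWW_n(S\otimes_R S)\otimes_{\WWW_n(R)}P_1$, and I would take $\nabla$ to be the resulting isomorphism; the cocycle condition holds because $\nabla$ is induced from $P_1$. Applying Proposition \ref{Pr:descent-module}, the complex $\mathcal{C}_{\WWW_n}(P;S/R)$ is exact; writing $P_0$ for the kernel of its first map, the natural map $\WWW_n(S)\otimes_{\WWW_n(R)}P_0\to P$ is an isomorphism, and by the argument in the proof of that Proposition, where $P_0$ is identified with a finitely generated projective module $E$ lifting $\breve P_0$, the module $P_0$ is finitely generated projective over $\WWW_n(R)$.

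Next I would construct a canonical $\WWW_n(R)$-linear map $\phi:P_1\to P_0$. The natural map $P_1\to P=\WWW_n(S)\otimes_{\WWW_n(R)}P_1$ has image in the equalizer $P_0$, since under the identifications above the two \v Cech maps agree on the image of $P_1$; this defines $\phi$. Applying $\WWW_n(S)\otimes_{\WWW_n(R)}-$, the composite $P=\WWW_n(S)\otimes_{\WWW_n(R)}P_1\xrightarrow{1\otimes\phi}\WWW_n(S)\otimes_{\WWW_n(R)}P_0\xrightarrow{\ \sim\ }P$ is the identity, so $1\otimes\phi$ is an isomorphism.

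Finally I would descend this isomorphism, using that $\Spec\WWW_n(S)\to\Spec\WWW_n(R)$ is surjective: on underlying spaces it is identified with the surjection $\Spec S\to\Spec R$, because the kernels of $\WWW_n(R)\to R$ and $\WWW_n(S)\to S$ are nil ideals (as $p$ is nilpotent). Now $\Coker\phi$ is a finitely generated $\WWW_n(R)$-module with $\Coker\phi\otimes_{\WWW_n(R)}\WWW_n(S)=\Coker(1\otimes\phi)=0$, hence $\Coker\phi=0$ by surjectivity of the spectrum map. Since $P_0$ is projective, $\phi$ then splits, so $P_1\cong\Ker\phi\oplus P_0'$ with $\phi$ an isomorphism on $P_0'$ and zero on $\Ker\phi$; base changing, $1\otimes\phi$ is an isomorphism vanishing on the direct summand $\WWW_n(S)\otimes_{\WWW_n(R)}\Ker\phi$, which forces that summand to be zero. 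As $\Ker\phi$ is finitely generated over $\WWW_n(R)$, the surjectivity of the spectrum map gives $\Ker\phi=0$. Thus $\phi$ is an isomorphism and $P_1\cong P_0$ is projective.

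I expect the main subtlety to be carrying out this last descent step without assuming that $\WWW_n(R)\to\WWW_n(S)$ is flat: the hypothesis only gives projectivity of $\WWW_n(S)\otimes_{\WWW_n(R)}P_1$, so one argues with finitely generated modules and the surjectivity of $\Spec\WWW_n(S)\to\Spec\WWW_n(R)$ rather than invoking standard faithfully flat descent of projectivity. Alternatively, one could first establish that $\WWW_n(R)\to\WWW_n(S)$ is faithfully flat --- using that $W_{n+1}$ preserves flatness together with the exact sequence $0\to R[p]\to W_{n+1}(R)\to\WWW_n(R)\to0$ coming from \eqref{Ab1e} --- after which the Corollary becomes an instance of the classical fact that a finitely presented module which is projective after a faithfully flat base change is projective; but the route through Proposition \ref{Pr:descent-module} has the advantage of being self-contained within the paper.
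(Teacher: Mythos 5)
Your proof is correct and takes essentially the same route as the paper: equip $P$ with its canonical descent datum, apply Proposition \ref{Pr:descent-module} to obtain a finitely generated projective $\WWW_n(R)$-module $P_0$, factor the natural map $P_1\to P$ through $\phi:P_1\to P_0$, and conclude by a Nakayama-type argument that $\phi$ is surjective, split, and finally bijective. The only difference is presentational: you spell out why a finitely generated $\WWW_n(R)$-module vanishing after base change to $\WWW_n(S)$ must be zero (via surjectivity of $\Spec\WWW_n(S)\to\Spec\WWW_n(R)$ and the nil kernels of $\WWW_n\to(\cdot)$), a step the paper compresses into two invocations of Nakayama's lemma.
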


\begin{proof}
The module $P$ carries a natural descent datum. 
Proposition \ref{Pr:descent-module} gives a finitely generated
projective $\WWW_n(R)$-module $P_0$, and the natural map $P_1\to P$
factors over a homomorphism $g:P_1\to P_0$ that
becomes bijective over $\WWW_n(S)$. By Nakayama's lemma $g$ is surjective.
Then $P_1\cong P_0\oplus N$ where $N$ is finitely generated and 
$\WWW_n(S)\otimes_{\WWW_n(R)}N=0$. By Nakayama's lemma it follows that $N=0$.
\end{proof}

Let $R \rightarrow S$ be a faithfully flat ring homomorphism as
before. Let $\mathcal{P} =  (P,Q,\iota, \epsilon, F, \dot{F})$ be a
truncated display of level over $R$. We denote the base change to $S$
by $\mathcal{P}_S = (P_S, Q_S, \iota_S, \epsilon_S, F_S,
\dot{F}_S)$. There is a natural homomorphism $\mathcal{P} \rightarrow
\mathcal{P}_S$ which is obvious in terms of a normal decomposition. 
We obtain a simplicial complex
\begin{equation}\label{AbstiegDsp1e}
   \mathcal{P} \rightarrow \mathcal{P}_S 
\begin{array}{c}
\rightarrow \\[-3mm]
\rightarrow 
\end{array}
\mathcal{P}_{S \otimes_R S} 
\begin{array}{c}
\rightarrow \\[-3mm]
\rightarrow \\[-3mm]
\rightarrow  
\end{array}
\mathcal{P}_{S \otimes_R S \otimes_R S} \ldots.
\end{equation}
\begin{prop}
The simplicial complex \eqref{AbstiegDsp1e} induces exact chain
complexes

\begin{displaymath}
\begin{array}{l}
   0 \rightarrow P \rightarrow P_S \rightarrow P_{S \otimes_R S}
   \rightarrow P_{S \otimes_R S \otimes_R S} \ldots \\[3mm]
   0 \rightarrow Q \rightarrow Q_S \rightarrow Q_{S \otimes_R S}
   \rightarrow Q_{S \otimes_R S \otimes_R S} \ldots .
\end{array} 
\end{displaymath}
\end{prop}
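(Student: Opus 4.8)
The plan is to reduce the two exactness statements to facts already available in this Appendix, namely the exactness of the \v Cech complex of $\mathcal W_n$ (the first Proposition of this Appendix) and the exactness of the \v Cech complex of $W_{n+1}$, which was established inside the proof of that Proposition by reference to \cite{Zink-DFG}, Lemma~30. The bridge between these and the displays $P,Q$ is a normal decomposition together with an additivity argument.

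First I would fix, using Proposition~\ref{Pr-norm-dec}, a normal decomposition $P=T\oplus L$, $Q=(I_{n+1}\otimes_{\mathcal W_n(R)}T)\oplus L$ with $T,L$ finitely generated projective $\mathcal W_n(R)$-modules. Since base change of truncated displays is built from standard representations, for every $R$-algebra $R'$ one has, compatibly with the structural maps (hence with the face maps of the simplicial object \eqref{AbstiegDsp1e}, as the natural map $\mathcal P\to\mathcal P_{R'}$ is the obvious one in a normal decomposition), isomorphisms $P_{R'}\cong M_{R'}^T\oplus M_{R'}^L$ and $Q_{R'}\cong\bigl(I_{n+1}(R')\otimes_{\mathcal W_n(R)}T\bigr)\oplus M_{R'}^L$, where $M_{R'}:=\mathcal W_n(R')\otimes_{\mathcal W_n(R)}M$ and where I have used $I_{n+1}(R')\otimes_{\mathcal W_n(R')}T_{R'}=I_{n+1}(R')\otimes_{\mathcal W_n(R)}T$. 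Feeding in $R'=S\otimes_R\dots\otimes_RS$ ($k$ factors) for all $k$, the augmented \v Cech complexes $0\to P\to P_S\to P_{S\otimes_RS}\to\cdots$ and the analogous one for $Q$ split as finite direct sums of complexes; since a direct summand of an exact complex of abelian groups is exact, it suffices to prove exactness of the augmented \v Cech complexes of the functors $R'\mapsto M_{R'}$ for $M$ finitely generated projective over $\mathcal W_n(R)$, and of $R'\mapsto I_{n+1}(R')\otimes_{\mathcal W_n(R)}M$ for such $M$.

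Next, both of these constructions are additive in $M$ and compatible with the \v Cech differentials, so writing $M$ as a direct summand of a free module $\mathcal W_n(R)^N$ reduces matters to the two base cases $M=\mathcal W_n(R)$ and $M=I_{n+1}$ (note $I_{n+1}(R')\otimes_{\mathcal W_n(R)}\mathcal W_n(R)^N=I_{n+1}(R')^N$). The case $M=\mathcal W_n(R)$ is exactly the first Proposition of this Appendix. For $M=I_{n+1}$ I would use that $I_{n+1}(R')={}^{V}W_n(R')=\ker\bigl(\mathbf w_0\colon W_{n+1}(R')\to R'\bigr)$, which gives a short exact sequence $0\to I_{n+1}(R')\to W_{n+1}(R')\xrightarrow{\mathbf w_0}R'\to 0$ of abelian groups, functorial in $R'$, hence a short exact sequence of augmented \v Cech complexes. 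The complex of $R'\mapsto R'$ is exact by ordinary faithfully flat descent, and the complex of $R'\mapsto W_{n+1}(R')$ is the complex $\mathcal CW_{n+1}(S/R)$ shown exact in the proof of the first Proposition; the long exact cohomology sequence then forces exactness of the \v Cech complex of $I_{n+1}$.

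I do not expect a genuine obstacle here. The only points needing care are the bookkeeping that a chosen normal decomposition of $\mathcal P$ really is compatible with base change and with the face maps, so that the two augmented complexes split as stated, and the identification $I_{n+1}(R')\otimes_{\mathcal W_n(R')}T_{R'}=I_{n+1}(R')\otimes_{\mathcal W_n(R)}T$. One could alternatively skip the normal decomposition for the statement about $P$, since $P$ is finitely generated projective over $\mathcal W_n(R)$ by axiom~(i) of Definition~\ref{Def-tr-disp} and the summand argument applies directly; only the statement about $Q$ genuinely requires a normal decomposition.
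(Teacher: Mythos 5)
Your proof is correct and follows essentially the same route as the paper: the paper also handles $P$ via $P_S=\mathcal W_n(S)\otimes_{\mathcal W_n(R)}P$ and the acyclicity of the \v Cech complex of $\mathcal W_n$, and handles $Q$ by choosing a normal decomposition and reducing to the \v Cech complex of $R'\mapsto I_{n+1}(R')\otimes_{\mathcal W_n(R)}T$. The only difference is that the paper dismisses that last exactness as ``clear,'' whereas you justify it via the functorial short exact sequence $0\to I_{n+1}(R')\to W_{n+1}(R')\to R'\to 0$ and the known acyclicity for $W_{n+1}$ and $R'$ --- a legitimate filling-in of the omitted detail.
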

{\bf Proof:} We know that 
$P_S = \mathcal{W}_n(S) \otimes_{\mathcal{W}_n(R)} P$. Therefore we
obtain the first exact sequence from the first Proposition. To obtain
the second exact sequence we choose a normal decomposition $P = T
\oplus L$. Then $Q_S = I_{n+1}(S) \otimes_{\mathcal{W}_n(R)} T \oplus
\mathcal{W}_n(S) \otimes_{\mathcal{W}_n(R)} L$. The exactness of the
second sequence amounts therefore to that of
\begin{displaymath}
   I_{n+1}(R) \otimes_{\mathcal{W}_n(R)} T \rightarrow I_{n+1}(S)
   \otimes_{\mathcal{W}_n(R)} T \rightarrow 
I_{n+1}(S \otimes_R S) \otimes_{\mathcal{W}_n(R)} T \rightarrow 
    \ldots. 
\end{displaymath}
But this is clear. 

We have the notion of descent datum relative to $S/R$ for a truncated 
display $\tilde{\mathcal{P}}$ over $S$. The Proposition shows that the
functor which associates to a truncated display over $R$ the base
change to a truncated display over $S$ with a descent datum is fully
faithful. 

\begin{prop}
\label{Pr:descent-disp}
Let $R \rightarrow S$ be faithfully flat. The base change functor
induces an equivalence of the category $\mathcal{D}_n(R)$ of truncated
displays of level $n$ over $R$ with the category of truncated
displays of level $n$ over $S$ endowed with a $\mathcal{W}_n$-descent datum
relative to $S/R$ (see: \eqref{AbstiegDsp0e}).
\end{prop}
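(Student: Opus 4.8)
The base change functor $\mathcal D_n(R)\to\mathcal D_n(S)$ factors through the category of truncated displays of level $n$ over $S$ equipped with a $\mathcal W_n$-descent datum relative to $S/R$, and the preceding Proposition (exactness of the chain complexes attached to $P$ and to $Q$) shows that this factorization is fully faithful. So the plan is to establish essential surjectivity: given a truncated display $\tilde{\mathcal P}=(P,Q,\iota,\epsilon,F,\dot F)$ of level $n$ over $S$ together with a descent datum $\nabla$ compatible with all the structure maps, I will construct a truncated display $\mathcal P_0$ over $R$ whose base change is isomorphic to $\tilde{\mathcal P}$ compatibly with descent data.

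First I would descend the underlying data $(P,Q,\iota,\epsilon)$, which by Lemma \ref{Le-std-proj} is a standard projective $(\mathcal W_n(S),I_{n+1}(S))$-module. By Proposition \ref{Pr:descent-module} the module $P$ descends to a finitely generated projective $\mathcal W_n(R)$-module $P_0$, say via $\phi_P\colon\mathcal W_n(S)\otimes_{\mathcal W_n(R)}P_0\xrightarrow{\ \sim\ }P$ carrying the canonical descent datum to $\nabla_P$. Reducing modulo $\kappa(I_{n+1})$ one has $\bar P:=S\otimes_{\mathcal W_n(S)}P\cong S\otimes_R\bar P_0$ with $\bar P_0:=R\otimes_{\mathcal W_n(R)}P_0$, and the submodule $\bar N:=Q/\Image\epsilon\subseteq\bar P$ of axiom (vi) is a direct summand (its quotient $P/\iota(Q)$ being finitely generated projective over $S$) compatible with the induced descent datum, hence by ordinary faithfully flat descent for $R\to S$ it descends to a direct summand $\bar N_0\subseteq\bar P_0$. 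Since the kernel of $\mathcal W_n(R)\to R$ is nilpotent, I would lift the exhibited filtration to finitely generated projective $\mathcal W_n(R)$-modules $T_0$ (lifting $\bar P_0/\bar N_0$) and $L_0$ (lifting $\bar N_0$) with an isomorphism $T_0\oplus L_0\cong P_0$ inducing it, and set $Q_0=I_{n+1}(R)\otimes_{\mathcal W_n(R)}T_0\oplus L_0$ with the standard maps $\iota_0,\epsilon_0$. Because a standard projective $(A,\mathfrak c)$-module is determined up to isomorphism by its underlying module together with its induced filtration (Proposition \ref{Pr-norm-dec} and the uniqueness of projective lifts along a nilpotent ideal), there is an isomorphism $\psi$ of $(\mathcal W_n(S),I_{n+1}(S))$-modules from the base change of $(P_0,Q_0,\iota_0,\epsilon_0)$ to $(P,Q,\iota,\epsilon)$ which is $\phi_P$ on the first component; a diagram chase using $\nabla$-compatibility of $\iota,\epsilon$ and the exact sequence of axiom (vi) shows that $\psi$ also intertwines the canonical descent datum with $\nabla_Q$ on the second component, since $\epsilon$ is surjective onto its image and $Q/\Image\epsilon$ embeds into $\bar P$, and on both pieces the descent datum is the one induced from $P$.

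It then remains to descend $F$ and $\dot F$ through $\psi$. The key observation is that, by naturality of the Frobenius $\mathcal W_n(-)\to W_n(-)$, each of the modules $W_n(S)\otimes_{F,\mathcal W_n(S)}P$, $W_n(S)\otimes_{F,\mathcal W_n(S)}Q$ and $W_n(S)\otimes_{\mathcal W_n(S)}P$ is obtained from the corresponding module over $W_n(R)$ (built from $P_0$ resp.\ $Q_0$) by extension of scalars along the faithfully flat map $W_n(R)\to W_n(S)$, and under $\psi$ and $\phi_P$ the relevant descent data become the canonical ones; hence the $W_n(S)$-linear maps $F^\sharp$ and $\dot F^\sharp$ descend to $W_n(R)$-linear maps $F_0^\sharp$ and $\dot F_0^\sharp$, i.e.\ to $F$-linear maps $F_0\colon P_0\to W_n(R)\otimes_{\mathcal W_n(R)}P_0$ and $\dot F_0\colon Q_0\to W_n(R)\otimes_{\mathcal W_n(R)}P_0$. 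Setting $\mathcal P_0=(P_0,Q_0,\iota_0,\epsilon_0,F_0,\dot F_0)$, its base change is $\tilde{\mathcal P}$ compatibly with descent data by construction, so it suffices to verify the axioms of Definition \ref{Def-tr-disp} for $\mathcal P_0$: (i) holds by Corollary \ref{Cor:descent-module}; (iii) holds because $\Coker\iota_0$ and $\Coker\epsilon_0$ are finitely presented over $R$ and become projective over $S$; and (ii), (iv), (v), (vi) may be checked after the faithfully flat base changes $\mathcal W_n(R)\to\mathcal W_n(S)$, $W_n(R)\to W_n(S)$ and $R\to S$, where they hold for $\tilde{\mathcal P}$. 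The main obstacle is the middle step: since $Q$ is not a projective $\mathcal W_n(S)$-module, Proposition \ref{Pr:descent-module} cannot be applied to it, so $Q$ must be recovered indirectly from the descended module $P_0$ and the descended Hodge filtration $\bar N_0$, and the delicate point is to check that this reconstruction is compatible with the descent datum on the $Q$-component, i.e.\ that the isomorphism $\psi$ respects $\nabla_Q$.
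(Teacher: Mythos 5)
There is a genuine gap, and it sits exactly where you flag it yourself at the end: the assertion that the isomorphism $\psi$ of truncated pairs automatically ``intertwines the canonical descent datum with $\nabla_Q$''. Your argument for this is that $\Image\epsilon$ is a quotient of $I_{n+1}\otimes P$ and $Q/\Image\epsilon$ embeds into $\bar P$, so on both pieces the descent datum is induced from $P$. But $Q$ is not determined by these two pieces: an automorphism of the pair $(P,Q,\iota,\epsilon)$ which is the identity on $P$ is of the form $\id_Q+h$ with $h\in\Hom\bigl(Q/\Image\epsilon,\;\Ker\iota\bigr)$, and by the four-term sequence \eqref{Ab4term1e} one has $\Ker\iota\cong\mathfrak u\otimes(P/\iota(Q))$ with $\mathfrak u=\Ker(\kappa)$, which is nonzero in general. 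Such an automorphism is trivial on $\Image\epsilon$ and on $Q/\Image\epsilon$ but not on $Q$. Consequently $\psi|_Q$ is only determined up to this group, and the comparison of the canonical descent datum with $\nabla_Q$ produces a $1$-cocycle $h$ with values in the quasi-coherent module $\Hom_R(\bar L,\mathfrak u\otimes\bar T)$ rather than the identity. The correct conclusion is not that $\psi$ intertwines, but that $\psi$ can be \emph{corrected} by $\id+g$ with $\delta(g)=h$, which requires the vanishing of the corresponding \v Cech $H^1$ for the faithfully flat cover. This cocycle--coboundary step is the actual heart of the proof and is missing; ``a diagram chase'' does not produce it.

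For comparison, the paper does not descend $P$ separately at all: it descends the two quotients $P/\iota(Q)$ and $Q/\Image\epsilon$ by ordinary descent, observes that a truncated pair is determined up to (non-unique) isomorphism by these quotients, and then identifies the set of descent data on a fixed pair with the nonabelian cohomology $\check H^1(S/R,\mathcal A)$ of the full automorphism presheaf $\mathcal A$ of the pair with trivialized quotients. The triviality of this $\check H^1$ is proved by induction on the level $n$ and a filtration whose graded pieces are additive groups of $R$-modules (further filtered by $p^mR$ when $pR\neq 0$). Your route of first descending $P$ via Proposition \ref{Pr:descent-module} is a legitimate variant --- it would replace the nonabelian $\mathcal A$ by the abelian subgroup $\Hom(Q/\Image\epsilon,\Ker\iota)$ fixing $P$, whose $\check H^1$ vanishes by quasi-coherent descent --- but you must actually run that vanishing argument and use it to modify $\psi$; as written the step is asserted, not proved. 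Once that is repaired, the remaining points (descent of $F^\sharp,\dot F^\sharp$ along $W_n(R)\to W_n(S)$ and the verification of the axioms after faithfully flat base change, with (i) from Corollary \ref{Cor:descent-module}) are fine.
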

{\bf Proof:} It follows from the last Proposition that this functor is
fully faithful. Therefore it suffices to show that a
$\mathcal{W}_n$-descent datum is always effective.

Following \cite{Lau-Smoothness} we begin to prove a related descent
result. We call a 
$(\mathcal{W}_n(R), I_{n+1})$-module $(P,Q,\iota, \epsilon)$ which
admits a normal decomposition a truncated pair. In particular the
$R$-modules $P/\iota(Q)$ and $Q/\Image \epsilon$ are 
projective finitely generated $R$-modules. 

The first lines of the proof of Proposition  \ref{Pr-norm-dec} show
that a second truncated pair $(P',Q',\iota', \epsilon')$ is isomorphic
to $(P,Q,\iota, \epsilon)$ iff there are isomorphisms of $R$-modules 
\begin{displaymath}
   P/\iota(Q) \cong P'/\iota(Q'), \quad Q/\Image \epsilon \cong
   Q'/\Image \epsilon'.  
\end{displaymath}
More precisely, if two such isomorphisms are given, they are induced
by an isomorphism
\begin{displaymath}
   (P,Q,\iota, \epsilon) \rightarrow (P',Q',\iota', \epsilon').
\end{displaymath}

We fix projective finitely generated $R$-modules $\bar{T}$ and
$\bar{L}$. Let $\mathcal{F}$ be the cofibered category over the
category of $R$-algebras $S_1$, such that an object of
$\mathcal{F}_{S_1}$ is a truncated pair $(P,Q,\iota, \epsilon)$ over
$S_1$ endowed with isomorphisms
\begin{displaymath}
   P/\iota(Q) \cong S_1 \otimes_R \bar{T}, \quad Q/\Image \epsilon \cong
   S_1 \otimes_R \bar{L}.
\end{displaymath}
By the remark above any two objects in $\mathcal{F}_{S_1}$ are
isomorphic. 

We fix an object $(P_0,Q_0, \iota_0, \epsilon_0) \in
\mathcal{F}_R$. We denote by $\mathcal{A}_{S_1}$ the automorphism of
the base change $(P_0,Q_0,\iota_0, \epsilon_0)_{S_1}$. 
By \cite{Milne} Chapt.III \S4 the set of isomorphism classes of descent data on $(P_0,Q_0, \iota_0,
\epsilon_0)_S$ is bijective to the nonabelian Cech cohomology set $\check{H}^1(S/R,
\mathcal{A})$.  We will show below that this pointed set is trivial.
Equivalently this says that any desent datum in $\mathcal{F}$
relative to $S/R$ is effective. 

We can now prove the Proposition. Let $\mathcal{P} = (P,Q,\iota,
\epsilon, F, \dot{F})$ be a truncated display over $S$ which is
endowed with a descent datum. We denote by $\breve{P} = (P,Q,\iota,
\epsilon)$ the associated truncated pair. The descent datum induces a
descent datum on the $S$-modules $P/\iota(Q)$ and $Q/\Image
\epsilon$. We find projective finitely generated projective
$R$-modules $\bar{T}$ and $\bar{L}$ and isomorphism which are
compatible with the descent data on both sides
\begin{displaymath}
   P/\iota(Q) \cong S \otimes_R \bar{T}, \quad Q/\Image \epsilon \cong
   S \otimes_R \bar{L}. 
\end{displaymath} 
This makes $\breve{\mathcal{P}}$ an object in $\mathcal{F}_S$ and the
descent datum a morphism in $\mathcal{F}_{S\otimes_R S}$. Therefore we
know that the descent datum is effective. Because of the fully
faithfulness of descent for truncated pairs the morphisms $F$ and
$\dot{F}$ descent too. 

It remains to show the triviality $\check{H}^1(S/R, \mathcal{A})$. 
We vary now $n$ and we set $\mathcal{F}_n = \mathcal{F}$.
Assume that $n = 1$. In this case we consider the image
$\bar{\mathcal{A}_1}$ by the map $\mathcal{A}_1 \rightarrow \Aut P_0 
\otimes_{\mathcal{W}_1(R)} R$. This is just the additive group of an
$R$-module and therefore the Cech cohomology of $\bar{\mathcal{A}_1}$
is trivial.The matrix representation of an element in
the kernel of $\mathcal{A}_1 \rightarrow \bar{\mathcal{A}_1}$ has
the form $E + \mathcal{X}$ where $E$ is the unit matrix and $\mathcal{X}$ 
a matrix with coefficients in $R$-modules. 

In the case $pR = 0$ we
have $(E + \mathcal{X})(E + \mathcal{X'}) = E + \mathcal{X} +
\mathcal{X'}$. This shows the the Cech cohomology of the kernel is
trivial. By the exact cohomology sequence for Cech cohomology of
presheaves we obtain that $H^{1}(S/R,\mathcal{A}_1)$ is trivial. In the general
case we consider the filtration of $R$ by $p^mR$ and obtain the
triviality too.  

Let now $n > 1$. 
We denote by $(P'_0, Q'_0, \iota'_0, \epsilon'_0) \in
\mathcal{F}_{n-1}$ the truncation of $(P_0, Q_0, \iota_0,
\epsilon_0)$. We denote by $\mathcal{A}_{n-1}$ its automorphism
group. Let $\mathcal{K}$ be the kernel of the natural surjection of
presheaves $\mathcal{A}_n \rightarrow \mathcal{A}_{n-1}$. By induction
it suffices to show that the Cech cohomology of $\mathcal{K}$ is
trivial. Again we look at the matrix interpretation of
$\mathcal{K}$. The matrices in $\mathcal{K}$ are of the form $E +
\mathcal{X}$ where $\mathcal{X}$ has coefficients in an $R$-module. In
the case $pR = 0$ we have simply the additive group of this module and
therefore the Cech cohomology is trivial. In the general case we
consider the filtration above. By the exact cohomology sequence we
obtain the the triviality of $H^{1}(S/R, \mathcal{A}_n)$. This proves
the Proposition.

\end{document}